\theoremstyle{plain}
\newtheorem{thm}{Theorem}[section]
\newtheorem{lemm}[thm]{Lemma}
\newtheorem{prop}[thm]{Proposition}
\newtheorem{cor}[thm]{Corollary}
\theoremstyle{definition}
\newtheorem{rem}[thm]{Remark}
\newcommand{\h}{{\rm h}}
\newcommand{\nablah}{\nabla_{{\rm h}}}
\newcommand{\Deltah}{\Delta_{{\rm h}}}
\newcommand{\xh}{x_{\rm h}}
\newcommand{\yh}{y_{\rm h}}
\newcommand{\xih}{\xi_{\rm h}}
\newcommand{\vv}{{\rm v}}
\newcommand{\alphah}{\alpha_{\rm h}}
\newcommand{\alphav}{\alpha_3}
\newcommand{\Lh}{L_{\rm h}}
\newcommand{\Lv}{L_{\rm v}}
\newcommand{\Gh}{G_{\rm h}}
\newcommand{\Gv}{G_{\rm v}}
\newcommand{\uh}{u_{\rm h}}
\newcommand{\vh}{v_{\rm h}}
\newcommand{\Nhh}{\mathcal{D}^{\rm h}}
\newcommand{\Nvv}{\mathcal{D}^{\rm v}}
\begin{document}
\title[$3$D Anisotropic Navier-Stokes equation]
{Large time behavior of solutions to the $3$D anisotropic Navier-Stokes equation}
\author[Mikihiro Fujii]{Mikihiro Fujii}
\address[]{Graduate School of Mathematics Kyushu University,Fukuoka 819--0395, JAPAN}
\email{3MA20005M@s.kyushu-u.ac.jp}
\keywords{$3$D anisotropic Navier-Stokes equation, large time behavior, decay estimates, asymptotic profile}
\subjclass[2010]{35Q30, 35B40, 35Q35}
\begin{abstract}
	We consider the large time behavior of the solution to the $3$D Navier-Stokes equation with horizontal viscosity $\Delta_{\rm h} u=\partial_1^2 u+\partial_2^2 u$ and show that the $L^p$ decay rate of the horizontal components of the velocity field coincides to that of the $2$D heat kernel, while the vertical component decays like the $3$D heat kernel.
	Moreover, we consider the asymptotic expansion of the solution and find that a portion of the nonlinear term affect the leading term of the horizontal components of the velocity field, whereas the leading term of the vertical component is given by only the linear solution.
\end{abstract}
\maketitle

\section{Introduction}\label{s1}
In this paper, we consider the initial value problem for the $3$D anisotropic Navier-Stokes equation:
\begin{align}\label{eq:ANS_1}
	\begin{cases}
		\partial_tu-\Deltah u+(u\cdot \nabla)u+\nabla p=0, & \qquad t>0,x\in \mathbb{R}^3,\\
		\nabla \cdot u=0, & \qquad t\geqslant 0,x\in \mathbb{R}^3,\\
		u(0,x)=u_0(x), & \qquad x\in \mathbb{R}^3.
	\end{cases}
\end{align}
Here, $u=(u_1(t,x),u_2(t,x),u_3(t,x))$ and $p=p(t,x)$ denote the unknown velocity and the unknown pressure of the fluid, respectively,
while the vector field $u_0=(u_{0,1}(x),u_{0,2}(x),u_{0,3}(x))$ is the given divergence free initial velocity of the fluid.
The operator $\Deltah:=\partial_1^2+\partial_2^2$ denotes the horizontal Laplacian and $\nabla=(\partial_1,\partial_2,\partial_3)$ represents the $3$D gradient.
Throughout this paper, for given $3$D vector $a=(a_1,a_2,a_3)\in \mathbb{R}^3$, we write $a_{\h}=(a_1,a_2)$.
We also denote by $\nablah=(\partial_1,\partial_2)$ the horizontal gradient.

In geophysical fluid dynamics,
meteorologists modelize the turbulent diffusion with anisotropic viscosity
$-\nu\Deltah-\varepsilon\partial_3^2$,
where the horizontal kinetic viscosity coefficient $\nu$ and the vertical kinetic viscosity coefficient $\varepsilon$ satisfy $0<\varepsilon \ll \nu$.
We refer to \cite[Chapter 4]{Pedlosky} for the complete discussion
for the physical background.
In this article, we are concerned with the system (\ref{eq:ANS_1}), which corresponds to the case $\nu=1$ and $\varepsilon=0$.

The aim of this paper is to reveal the anisotropic effect for the large time behavior of the solution $u$ to (\ref{eq:ANS_1})
and we derive the $L^p$ decay rate and the asymptotic expansion of the solution.
More precisely, for $s\in \mathbb{N}$ with $s\geqslant5$ and for sufficiently small initial data
$u_0\in H^s(\mathbb{R}^3)\cap L^1(\mathbb{R}_{\xh}^2;(W^{1,1}\cap W^{1,\infty})(\mathbb{R}_{x_3}))$ with $\nabla\cdot u_0=0$,
the solution $u$ to (\ref{eq:ANS_1}) satisfies
\begin{align*}
	\|\nabla^{\alpha}\uh(t)\|_{L^p}=O(t^{-(1- \frac{1}{p})-\frac{|\alphah|}{2}}),\qquad
	\|\nablah^{\alphah} u_3(t)\|_{L^p}=O(t^{-\frac{3}{2}(1- \frac{1}{p})- \frac{|\alphah|}{2}})
\end{align*}
as $t\to \infty$ for $1\leqslant p\leqslant \infty$ and $\alpha=(\alphah,\alpha_3)\in (\mathbb{N}\cup\{0\})^2\times(\mathbb{N}\cup\{0\})$ with $|\alpha|\leqslant 1$.
This implies that $\uh(t)$ decays like the $2$D heat kernel, whereas
$u_3(t)$ decays as the $3$D heat kernel.
Moreover, we shall show that $\uh(t)$ and $u_3(t)$ behave as
\begin{align*}
	\uh(t,x)&= \Gh(t,\xh)\int_{\mathbb{R}^2}u_{0,\h}(\yh,x_3)d\yh\\
				&\quad-\Gh(t,\xh)\int_0^{\infty}\int_{\mathbb{R}^2}\partial_3(u_3\uh)(\tau,\yh,x_3)d\yh d\tau+o(t^{-(1- \frac{1}{p})})\quad {\rm in\ }L^p(\mathbb{R}^3)\quad (1\leqslant p \leqslant \infty),\\
	u_3(t,x)&= \Gh(t,\xh)\int_{\mathbb{R}^2}u_{0,3}(\yh,x_3)d\yh+o(t^{-\frac{3}{2}(1- \frac{1}{p})})\quad {\rm in\ }L^p(\mathbb{R}^3)\quad (1\leqslant p < \infty),
\end{align*}
as $t\to \infty$,
where $\Gh(t,\xh)$ denotes the $2$D Gaussian.
Furthermore, we also prove that if, in addition, $s\geqslant 9$ and
$|\xh|u_0(x)\in L^1(\mathbb{R}_{\xh}^2;(L^{1}\cap L^{\infty})(\mathbb{R}_{x_3}))$,
then the remainder terms $o(t^{-(1- \frac{1}{p})})$ and $o(t^{-\frac{3}{2}(1- \frac{1}{p})})$
of the above asymptotic expansions are improved to $O(t^{-(1- \frac{1}{p})-\frac{1}{2}}\log t)$ and $O(t^{-\frac{3}{2}(1- \frac{1}{p})- \frac{1}{2p}})$ for $1<p\leqslant \infty$, respectively
and also we obtain the higher order expansion of the vertical component of the solution.

Before we state our main theorems, let us recall known results related to the system (\ref{eq:ANS_1}).
For the well-posedness of (\ref{eq:ANS_1}), Chemin, Desjardins, Gallagher and Grenier \cite{CDGG} proved
the existence of a local solution for large data and a global solution for small data in $H^{0,s}(\mathbb{R}^3)$ ($s>1/2$), where
\begin{align*}
	H^{\sigma,s}(\mathbb{R}^3):=L^2(\mathbb{R}^3)\cap \dot{H}^{\sigma,s}(\mathbb{R}^3),\qquad
	\dot{H}^{\sigma,s}(\mathbb{R}^3):=(-\Deltah)^{-\frac{\sigma}{2}}(-\partial_3^2)^{-\frac{s}{2}}L^2(\mathbb{R}^3).
\end{align*}
The uniqueness of solutions in $H^{0,s}(\mathbb{R}^3)$ ($s>1/2$) is proved by Iftimie \cite{Iftimie}.
Here, the regularity condition $s>1/2$ is caused by the Sobolev embedding $H^s(\mathbb{R}_{x_3})\hookrightarrow L^{\infty}(\mathbb{R}_{x_3})$
and $s=1/2$ corresponds to the scaling critical exponent.
Paicu \cite{Paicu} considered the scaling critical setting
and proved the global existence of a unique solution in the $L^2$-based anisotropic Besov space $\mathcal{B}^{0,\frac{1}{2}}(\mathbb{R}^3)$.
Chemin and Zhang \cite{CZ} and Zhang and Fang \cite{ZF} extend the Paicu theorem to the $L^p$ framework and proved the global existence results in the scaling critical anisotropic Besov space $\mathcal{B}^{-1+\frac{2}{p},\frac{1}{2}}_{p}(\mathbb{R}^3)$ ($2\leqslant p< \infty$).
We refer to \cite{BCD,LPZ,PM,YY,ZT} for other literatures on the well-posedness for (\ref{eq:ANS_1}).

Next, we forcus on the previous studies for the large time behavior.
Ji, Wu and Yang \cite{JWY} proved that for given small initial data $u_0\in H^4(\mathbb{R}^3)\cap H^{-\sigma,1}(\mathbb{R}^3)$ ($3/4\leqslant\sigma<1$),
the global solution $u$ of (\ref{eq:ANS_1}) satisfies
\begin{align}\label{JWY_decay}
	\|u(t)\|_{H^4\cap H^{-\sigma,1}}\leqslant C\|u_0\|_{H^4\cap H^{-\sigma,1}},\qquad
	\|\nabla^{\alpha} u(t)\|_{L^2}\leqslant C(1+t)^{-\frac{\sigma+|\alphah|}{2}}\|u_0\|_{H^4\cap H^{-\sigma,1}}
\end{align}
for $t\geqslant 0$ and $\alpha=(\alphah,\alpha_3)\in (\mathbb{N}\cup\{0\})^2\times (\mathbb{N}\cup\{0\})$ with $|\alpha|\leqslant 1$.
The decay rate of (\ref{JWY_decay}) coincides to that of the linear solution $e^{t\Deltah}u_0$.
Xu and Zhang \cite{XZ} relaxed the regularity condition of \cite{JWY} and showed that
for $s>2$ and $(1+3s)/\{10(s-1)\}<\sigma<1$, the solution $u$ of (\ref{eq:ANS_1})
with given small initial data $u_0\in (\dot{H}^{0,s}\cap\dot{H}^{-\sigma,0}\cap \dot{H}^{-\sigma,-\frac{\sigma}{2}-\frac{1}{4}}\cap \dot{H}^{-\frac{1}{2},1})(\mathbb{R}^3)$ satisfies that
for $\alphah\in (\mathbb{N}\cup\{0\})^2$ with $|\alphah|\leqslant 1$,
\begin{align}\label{XZ_decay}
	\begin{split}
		&\|\nablah^{\alphah}u(t)\|_{L^2}=O(t^{-\frac{\sigma+|\alphah|}{2}}),\qquad
		\|\partial_3u(t)\|_{L^2}=O(t^{-\frac{1}{4}}),\\
		&\|\nablah^{\alphah}u_3(t)\|_{L^2}=O(t^{-\frac{1}{2}(\frac{3}{2}\sigma+\frac{1}{4}+|\alphah|)})
	\end{split}
\end{align}
as $t\to \infty$.
This implies that the horizontal components decay like the $2$D heat kernel and the vertical component decays as the $3$D heat kernel.
We refer to \cite{CJ,FM,HZ,KO} for the large time behavior of isotropic Navier-Stokes equations.

The purpose of this paper is to refine the results obtained by \cite{JWY,XZ}
and to clarify the effect of the anisotropy on the large time behavior of the solution
in terms of $L^p$ decay rates and asymptotic expansions.

In order to state our results precisely,
we prepare some notation.
For $s \in \mathbb{N}$, we define a function space $X^s(\mathbb{R}^3)$ by
\begin{align*}
	X^s(\mathbb{R}^3)
	&:=H^s(\mathbb{R}^3)\cap L^1(\mathbb{R}^2_{\xh};(W^{1,1}\cap W^{1,\infty})(\mathbb{R}_{x_3})).
\end{align*}
Let $\Gh(t,\xh)$ be the $2$D Gaussian:
\begin{align*}
	\Gh(t,\xh):=(4\pi t)^{-1}e^{-\frac{|\xh|^2}{4t}}, \qquad (t, \xh)=(t,x_1,x_2)\in (0,\infty)\times \mathbb{R}^2.
\end{align*}

Our first main result reads as follows.
\begin{thm}\label{thm:main_1}
	Let $s\in \mathbb{N}$ satisfy $s\geqslant 5$.
	Then, there exists a positive constant $\delta_1=\delta_1(s)$ such that
	the following properties hold:

	For any $u_0\in X^s(\mathbb{R}^3)$ satisfying $\nabla\cdot u_0=0$ and
	$\|u_0\|_{X^s}\leqslant \delta_1$, there exists a unique solution $u\in C([0,\infty);X^s(\mathbb{R}^3))$ of (\ref{eq:ANS_1}) and
	there exists an absolute positive constant $C$ such that
	\begin{align}
		&\|\nabla^{\alpha}\uh(t)\|_{L^p}\leqslant Ct^{-(1-\frac{1}{p})- \frac{|\alphah|}{2}}\|u_0\|_{X^s}, \label{main_1:uhL^p_decay}\\
		&\|\nablah^{\alphah}u_3(t)\|_{L^p}\leqslant Ct^{-\frac{3}{2}(1-\frac{1}{p})- \frac{|\alphah|}{2}}\|u_0\|_{X^s}\label{main_1:u3L^p_decay}
	\end{align}
	for all $1\leqslant p\leqslant \infty$, $t>0$ and $\alpha=(\alphah,\alpha_3)\in (\mathbb{N}\cup\{0\})^2\times (\mathbb{N}\cup\{0\})$ with $|\alpha|\leqslant 1$.

	Moreover, there hold for $1\leqslant p\leqslant \infty$,
	\begin{align}\label{main_1:asymp_uh}
		\begin{split}
			\lim_{t\to \infty}
			t^{1- \frac{1}{p}}
			&\left\|
				\uh(t,x)
				-
				\Gh(t,\xh)\int_{\mathbb{R}^2}u_{0,\h}(\yh,x_3)d\yh\right.\\
			&\left.
				\qquad \qquad+
				\Gh(t,\xh)\int_0^{\infty}\int_{\mathbb{R}^2}\partial_3(u_3\uh)(\tau,\yh,x_3)d\yh d\tau
			\right\|_{L^p_x}=0
		\end{split}
	\end{align}
	and for $1\leqslant p<\infty$,
	\begin{align}\label{main_1:asymp_u3}
		\lim_{t\to \infty}
		t^{\frac{3}{2}(1- \frac{1}{p})}
		\left\|
			u_3(t,x)
			-
			\Gh(t,\xh)\int_{\mathbb{R}^2}u_{0,3}(\yh,x_3)d\yh
		\right\|_{L^p_x}=0.
	\end{align}
\end{thm}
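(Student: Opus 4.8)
The plan is to recast \eqref{eq:ANS_1} in Duhamel form and run a decay-weighted fixed point argument, and then to extract the profiles by a moment expansion of the resulting formula. Applying the Leray projection $\HP=I-\nabla(-\Delta)^{-1}\nabla\cdot$ (with the \emph{full} Laplacian $\Delta=\Deltah+\partial_3^2$ coming from the pressure) removes $p$ and gives
\begin{align*}
	u(t)=e^{t\Deltah}u_0-\int_0^te^{(t-\tau)\Deltah}\HP\nabla\cdot(u\otimes u)(\tau)\,d\tau,
\end{align*}
where $e^{t\Deltah}f=\Gh(t,\cdot)*_{\xh}f$ acts as a convolution in $\xh$ only and as the identity in $x_3$. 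The basic input is the anisotropic smoothing estimate from Young's inequality in $\xh$ together with $\|\nablah^{\alphah}\Gh(t)\|_{L^r(\mathbb{R}^2)}\lesssim t^{-(1-\frac1r)-\frac{|\alphah|}{2}}$: for $1\le q\le p\le\infty$,
\begin{align*}
	\|\nablah^{\alphah}e^{t\Deltah}f\|_{L^p_{\xh}L^{p}_{x_3}}\lesssim t^{-(\frac1q-\frac1p)-\frac{|\alphah|}{2}}\|f\|_{L^q_{\xh}L^{p}_{x_3}},
\end{align*}
which already shows that horizontal differentiation buys $t^{-1/2}$ while differentiation or integration in $x_3$ buys nothing. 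I would define the solution space by the weights of \eqref{main_1:uhL^p_decay}--\eqref{main_1:u3L^p_decay} together with the $H^s$ energy norm, and close a bilinear contraction once $\|u_0\|_{X^s}$ is small; the $H^s$ part, propagated by standard anisotropic energy estimates as in \cite{CDGG,Iftimie}, supplies both global existence and the high-frequency control needed to interpolate the $L^p$ bounds.

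The two different rates come from the interplay of $\nabla\cdot u=0$ with the structure of $\HP\nabla\cdot(u\otimes u)$. Writing the nonlinearity as a sum of terms $\partial_j(u_ku_\ell)$ plus projection corrections, I would separate \emph{horizontal-derivative} terms $\partial_{x_1},\partial_{x_2}(\cdots)$, which gain a full $t^{-1/2}$ from the display above, from the \emph{vertical-derivative} term $\partial_3(\cdots)$, which gains nothing. For the horizontal components this bookkeeping together with the quadratic decay of $u\otimes u$ reproduces \eqref{main_1:uhL^p_decay}. For the vertical component the extra factor $t^{-\frac12(1-\frac1p)}$ is produced by a cancellation: integrating $\nabla\cdot u_0=0$ over $\mathbb{R}^2_{\xh}$ gives $\partial_3\!\int_{\mathbb{R}^2}u_{0,3}(\yh,x_3)\,d\yh=0$, hence $\int_{\mathbb{R}^2}u_{0,3}(\yh,x_3)\,d\yh\equiv0$, so the horizontal zeroth moment of $u_{0,3}$ vanishes and the linear part of $u_3$ enjoys an improved first-moment decay; the same vanishing-moment structure is propagated to the nonlinear term through $\HP$, upgrading the generic rate to \eqref{main_1:u3L^p_decay}. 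I would thus prove \eqref{main_1:uhL^p_decay}--\eqref{main_1:u3L^p_decay} by a single bootstrap, splitting the time integral at $\tau=t/2$ (semigroup smoothing for $\tau<t/2$, decay of the solution for $\tau>t/2$).

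For the asymptotics I would use the zeroth-moment expansion $e^{t\Deltah}f=\Gh(t,\xh)\int_{\mathbb{R}^2}f(\yh,x_3)\,d\yh+o(t^{-(1-\frac1p)})$ in $L^p$, valid since $f\in L^1_{\xh}$ with finite first moment; this yields the linear term $\Gh(t,\xh)\int u_{0,\h}\,d\yh$ of \eqref{main_1:asymp_uh}, while for $u_3$ the coefficient $\int u_{0,3}\,d\yh$ vanishes by the moment identity, which is exactly what makes the remainder in \eqref{main_1:asymp_u3} an $o$ of the faster rate. For the nonlinear integral I would show that only $\partial_3(u_3\uh)$ survives to leading order: every horizontal-derivative contribution is $o(t^{-(1-\frac1p)})$ because of the extra $t^{-1/2}$ smoothing and the vanishing horizontal moment of a horizontal divergence, and the Leray corrections $\nablah(-\Delta)^{-1}\partial_i\partial_j(\cdots)$ are lower order since the outer $\nablah$ commutes out to the semigroup and gains $t^{-1/2}$. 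Replacing $e^{(t-\tau)\Deltah}$ by $\Gh(t,\xh)$ (identity in $x_3$) and extending the integral to $\infty$, legitimate once the decay bounds make $\int_0^\infty\!\!\int_{\mathbb{R}^2}\partial_3(u_3\uh)\,d\yh\,d\tau$ convergent in the relevant $x_3$-norm, produces the nonlinear profile $-\Gh(t,\xh)\int_0^\infty\!\!\int_{\mathbb{R}^2}\partial_3(u_3\uh)\,d\yh\,d\tau$; for $u_3$ the same computation leaves no surviving term, giving \eqref{main_1:asymp_u3}.

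The main difficulty is the nonlinear asymptotics rather than the decay rates: one must simultaneously justify convergence of the time integral to $+\infty$ and prove that every term other than $\partial_3(u_3\uh)$ is genuinely $o$ of the leading rate, not merely bounded by it. This is delicate at the endpoints $p=1,\infty$, where the borderline $L^1_{\xh}$–$L^\infty_{\xh}$ mapping properties of $\HP$ and of $(-\Delta)^{-1}\partial_i\partial_j$ are lost, so I expect to rely on the anisotropic structure of $X^s$ (the $W^{1,1}\cap W^{1,\infty}$ in $x_3$) to control the projection and to make the leading/remainder splitting rigorous. Showing that the horizontal-divergence and pressure contributions vanish to the stated order is the technically hardest step, and it is there that the vanishing-moment identities for $u_{0,3}$ and for $\nablah\cdot(\cdot)$ must be used quantitatively.
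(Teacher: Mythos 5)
There is a genuine gap in your mechanism for the vertical decay \eqref{main_1:u3L^p_decay}. You derive the improved rate from the vanishing horizontal moment $\int_{\mathbb{R}^2}u_{0,3}(\yh,x_3)\,d\yh=0$ together with an ``improved first-moment decay'' of the linear part; the vanishing of the zeroth moment is indeed correct (it is constant in $x_3$ by $\nabla\cdot u_0=0$ and integrable, hence zero), but converting it into a \emph{rate} requires the first moment $|\xh|u_{0,3}$ to be integrable, an assumption made only in Theorem \ref{thm:main_2}, not here. Without it the cancellation yields merely $o(t^{-(1-\frac1p)})$, which does not give $t^{-\frac32(1-\frac1p)}$; and even with moments, that route produces the rate $t^{-(1-\frac1p)-\frac12}$ of the \emph{second}-order term rather than the stated family of rates. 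The paper's actual mechanism (Lemma \ref{lemm:Linear_est_2}) is moment-free and different: interpolate $\|f\|_{\Lv^q}\leqslant\|f\|_{\Lv^1}^{1/q}\|f\|_{\Lv^{\infty}}^{1-1/q}$, bound the $\Lv^{\infty}$-norm by $\|\partial_3 f\|_{\Lv^1}$, and use $\partial_3u_{0,3}=-\nablah\cdot u_{0,\h}$ to trade the vertical derivative for a horizontal one, which the semigroup converts into $t^{-\frac12}$; the fraction $1-\frac1q$ of this gain gives $t^{-(1-\frac1p)-\frac12(1-\frac1q)}$, i.e.\ $t^{-\frac32(1-\frac1p)}$ at $q=p$. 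The same interpolation, with the remainder factor $\mathcal{R}_{p,0}(t)^{\frac1p}$, is what makes \eqref{main_1:asymp_u3} a genuine little-$o$ for $p<\infty$ (and explains its failure at $p=\infty$), so your route to the vertical asymptotics inherits the same gap.

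Second, you correctly identify the unboundedness of $\HP$ on $L^1$ and $L^{\infty}$ as the hardest point, but you leave it unresolved (``rely on the anisotropic structure of $X^s$''), and your claim that all Leray corrections gain $t^{-\frac12}$ because ``the outer $\nablah$ commutes out'' fails exactly in the vertical equation, where terms like $\partial_3\partial_3\partial_3(-\Delta)^{-1}e^{(t-\tau)\Deltah}(u_3^2)$ carry no horizontal derivative a priori. The paper's decisive device (Lemma \ref{lemma:K}, Proposition \ref{prop:integral_equation}) is the explicit kernel $K(t,x)=\int_0^{\infty}\Gh(t+s,\xh)\Gv(s,x_3)\,ds$ with the identities $\partial_3K=-{\rm sgn}(x_3)(-\Deltah)^{\frac12}K$ and $\partial_3^2K*f=-e^{t\Deltah}f-\Deltah K*f$, which trade every vertical derivative for horizontal ones (up to a harmless ${\rm sgn}(x_3)$ and local terms), eliminate all Riesz transforms, and yield the endpoint kernel bounds of Lemma \ref{lemm:Linear_est_3} on which every nonlinear estimate and the classification of the surviving term in \eqref{main_1:asymp_uh} rest; without this or an equivalent, your outline cannot be closed at $p=1,\infty$. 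A smaller but real issue: a pure bilinear contraction in decay norms cannot close, since $\Nhh_1$ loses a $\partial_3$-derivative that $e^{t\Deltah}$ never recovers; the paper instead takes the global $H^s$ solution of \cite{JWY} as given and runs a continuity/bootstrap argument, with $s\geqslant5$ and Gagliardo--Nirenberg absorbing the extra vertical derivatives, which is the correct fix.
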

\begin{rem}\label{rem:main_1}
	\begin{itemize}
		\item [(1)] The decay rate of (\ref{main_1:uhL^p_decay})and (\ref{main_1:u3L^p_decay}) with $p=2$
		is same as that of the $L^2$-norm of the $2$D Gaussian,
		which corresponds to the limiting case $\sigma=1$ in (\ref{JWY_decay}) and (\ref{XZ_decay}).
		Thus, our result extends the previous studies \cite{JWY,XZ}.
		\item [(2)]
		Unlike isotropic nonlinear parabolic-type equations \cite{EZ,FM,IK,Iwabuchi,Kato,NSU,NY}, (\ref{main_1:asymp_uh})
		implies the leading term of $\uh(t)$ cannot be given by only the linear solution,
		and the nonlinearity affect the leading term.
		\item [(3)] The asymptotic limit (\ref{main_1:asymp_u3}) fails if $p=\infty$.
		Indeed, there exists an initial data such that the limit  (\ref{main_1:asymp_u3}) with $p=\infty$ does not converge to $0$.
		These facts are precisely stated in our second main result and its corollary.
	\end{itemize}
\end{rem}

If we suppose the spatial decay assumption on the initial data, then we obtain the convergence rate for the above limits (\ref{main_1:asymp_uh})-(\ref{main_1:asymp_u3})
and the higher order asymptotic expansion of the vertical component of the velocity field.
The following theorem is our second main result.
\begin{thm}\label{thm:main_2}
	Let $s\in \mathbb{N}$ satisfy $s\geqslant 9$.
	Then, there exists a positive constant $\delta_2=\delta_2(s)\leqslant \delta_1(s)$ such that
	if $u_0\in X^s(\mathbb{R}^3)$ satisfies
	$|\xh|u_0(x)\in L^1(\mathbb{R}^2_{\xh};(L^1\cap L^{\infty})(\mathbb{R}_{x_3}))$,
	$\nabla\cdot u_0=0$
	and $\|u_0\|_{X^s}\leqslant \delta_2$,
	then for $1< p\leqslant \infty$
	there exists a positive constant $C=C(p)$ such that
	\begin{align}\label{main_2:asymp_uh}
		\begin{split}
			&\left\|
				\uh(t,x)
				-
				\Gh(t,\xh)\int_{\mathbb{R}^2}u_{0,\h}(\yh,x_3)d\yh\right.\\
			&\left.
				\qquad \qquad+
				\Gh(t,\xh)\int_0^{\infty}\int_{\mathbb{R}^2}\partial_3(u_3\uh)(\tau,\yh,x_3)d\yh d\tau
			\right\|_{L^p_x}
			\leqslant
			C\|u_0\|_{\widetilde{X^s}}t^{-(1- \frac{1}{p})- \frac{1}{2}}\log t
		\end{split}
	\end{align}
	for $t\geqslant 2$
	and for $1\leqslant p\leqslant \infty$
	there exists a positive constant $C =C(p)$ such that
	\begin{align}\label{main_2:asymp_u3}
		\begin{split}
			&\left\|
				u_3(t,x)
				-
				\Gh(t,\xh)\int_{\mathbb{R}^2}u_{0,3}(\yh,x_3)d\yh
			\right\|_{L^p_x}
			\leqslant
			\begin{cases}
				C\|u_0\|_{\widetilde{X^s}}t^{-\frac{3}{2}(1- \frac{1}{p})- \frac{1}{2p}} & (1<p\leqslant \infty)\\
				C\|u_0\|_{\widetilde{X^s}}t^{- \frac{1}{2}}\log t & (p=1)
			\end{cases}
		\end{split}
	\end{align}
	for all $t\geqslant 2$,
	where
	$
	\|u_0\|_{\widetilde{X^s}}
	:=\|u_0\|_{X^s}+\||\xh|u_0(x)\|_{L^1(\mathbb{R}^2_{\xh};(L^1\cap L^{\infty})(\mathbb{R}_{x_3}))}
	$.

	Furthermore, for any $1< p\leqslant \infty$, it holds
	\begin{align}\label{main_2:asymp_u3_second}
		\begin{split}
			\lim_{t\to \infty}
			t^{\frac{3}{2}(1- \frac{1}{p})+\frac{1}{2p}}
			&\left\|
			u_3(t,x)
			-\Gh(t,\xh)\int_{\mathbb{R}^2}u_{0,3}(\yh,x_3)d\yh
			+\nablah \Gh(t,\xh)\cdot \int_{\mathbb{R}^2}\yh u_{0,3}(\yh,x_3)d\yh
			\right.\\
			&\left. \qquad \qquad \qquad \qquad
			-\nablah \Gh(t,\xh) \cdot \int_0^{\infty}\int_{\mathbb{R}^2}(u_3\uh)(\tau,\yh,x_3)d\yh d\tau
			\right\|_{L^p_x}
			=0.
		\end{split}
	\end{align}
\end{thm}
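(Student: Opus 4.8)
The plan is to analyze the mild (Duhamel) formulation
\[
u(t)=e^{t\Deltah}u_0-\int_0^t e^{(t-\tau)\Deltah}\HP\nabla\cdot(u\otimes u)(\tau)\,d\tau,
\]
treating $x_3$ as a parameter for the horizontal heat semigroup $e^{t\Deltah}$, whose kernel is the $2$D Gaussian $\Gh(t,\xh)$. The engine of the whole argument is the moment (Taylor) expansion of this kernel: writing $e^{t\Deltah}f=\int_{\mathbb{R}^2}\Gh(t,\xh-\yh)f(\yh)\,d\yh$ and expanding $\Gh(t,\xh-\yh)=\Gh(t,\xh)-\nablah\Gh(t,\xh)\cdot\yh+O(|\yh|^2)$, I control the zeroth-order remainder by $t^{-(1-\frac1p)-\frac12}\||\yh|f\|_{L^1}$ and the first-order remainder by a second moment. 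Since $\|\nablah^{\alphah}\Gh(t)\|_{L^p_{\xh}}\sim t^{-(1-\frac1p)-\frac{|\alphah|}{2}}$, each extra moment buys a factor $t^{-\frac12}$, and the role of the new hypothesis $|\xh|u_0\in\B$ is precisely to make the first moment available. A decisive preliminary observation, used throughout the vertical analysis, is that $\nabla\cdot u_0=0$ forces $\int_{\mathbb{R}^2}u_{0,3}(\yh,x_3)\,d\yh$ to be constant in $x_3$, hence $\equiv 0$; thus the putative leading term of $u_3$ vanishes and the genuine leading behaviour of $u_3$ is of first-moment (i.e.\ second) order, which is exactly what (\ref{main_2:asymp_u3_second}) records (note $\frac32(1-\frac1p)+\frac1{2p}=(1-\frac1p)+\frac12$).

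For the horizontal estimate (\ref{main_2:asymp_uh}) I expand each piece of Duhamel to first moment. The linear part obeys $\|e^{t\Deltah}u_{0,\h}-\Gh(t)\int u_{0,\h}\,d\yh\|_{L^p}\lesssim t^{-(1-\frac1p)-\frac12}\||\xh|u_0\|_{\B}$, already within budget and carrying no logarithm. For the nonlinear part I note that upon integrating over $\yh$ all horizontal-divergence contributions and the horizontal pressure gradient $\nablah p$ vanish, so that $\int_{\mathbb{R}^2}[\HP\nabla\cdot(u\otimes u)]_{\h}\,d\yh=\partialv\int_{\mathbb{R}^2}(u_3\uh)\,d\yh$; this isolates $\partialv(u_3\uh)$ as the only term with nonzero zeroth moment and produces the profile $-\Gh(t)\int_0^\infty\int_{\mathbb{R}^2}\partialv(u_3\uh)\,d\yh\,d\tau$. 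The remainder is then organized by splitting $\int_0^t=\int_0^{t/2}+\int_{t/2}^t$ and separating the tail $\Gh(t)\int_t^\infty$: on $(0,t/2)$ I replace $e^{(t-\tau)\Deltah}$ by $\Gh(t)\int d\yh$ (the error controlled by the first moment of the nonlinearity and by $\|\Gh(t-\tau)-\Gh(t)\|_{L^p}$), while on $(t/2,t)$ and on the tail I estimate directly using the decay rates of Theorem~\ref{thm:main_1}. The logarithmic factor emerges here from the borderline temporal integrability of $\int_{\mathbb{R}^2}(u_3\uh)\,d\yh$, whose decay rate sits exactly at the threshold turning the time integral over $\tau\sim t$ (together with the tail) into $\int^t d\tau/\tau\sim\log t$.

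For the vertical component I treat (\ref{main_2:asymp_u3}) and (\ref{main_2:asymp_u3_second}) together. Since the zeroth moment of $u_{0,3}$ vanishes, the linear part already satisfies $\|e^{t\Deltah}u_{0,3}\|_{L^p}=\|\int(\Gh(t,\xh-\yh)-\Gh(t,\xh))u_{0,3}\,d\yh\|_{L^p}\lesssim t^{-(1-\frac1p)-\frac12}\||\xh|u_0\|_{\B}$, giving the $O$-rate of (\ref{main_2:asymp_u3}); the finer statement (\ref{main_2:asymp_u3_second}) then follows from $e^{t\Deltah}u_{0,3}=-\nablah\Gh(t)\cdot\int\yh u_{0,3}\,d\yh+R_t$ with $R_t=o(t^{-(1-\frac1p)-\frac12})$, the latter obtained by splitting the $\yh$-integral at $|\yh|\le\rho\sqrt t$ and letting $\rho\to0$ after $t\to\infty$ (so only the first moment, not a second moment, is needed). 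For the nonlinear part the zeroth moment again vanishes — this is the crucial cancellation $\int[\HP\nabla\cdot(u\otimes u)]_3\,d\yh=0$, which I verify in Fourier at $\xih=0$, where the direct term $\partialv\int u_3^2\,d\yh$ is exactly cancelled by the pressure contribution $\partialv(-\Deltah-\partialv^2)^{-1}\partial_i\partial_j(u_iu_j)$. Computing the first moment the same way, the pressure term supplies (via $\partialv$ acting against the $1/\xi_3$ factor) a contribution proportional to $\int u_3\uh\,d\yh$, the $u_3^2$-type terms being negligible because $u_3^2$ decays fast; this yields the profile $+\nablah\Gh(t)\cdot\int_0^\infty\int(u_3\uh)\,d\yh\,d\tau$ of (\ref{main_2:asymp_u3_second}), with the $p=1$ endpoint again producing a $\log t$.

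The main obstacle, I expect, is twofold, and both parts concern the nonlinear term. First, to run the first-moment expansions quantitatively one needs weighted-in-$\xh$ control of the solution itself, i.e.\ a bound of the form $\||\xh|(u_3\uh)(\tau)\|_{L^1(\mathbb{R}^2_{\xh};\,\cdots)}$ that is integrable in $\tau$; propagating the weight $|\xh|$ from $u_0$ to $u(\tau)$ through the mild formulation, with sharp time-decay, is the real technical heart and must be established as a preliminary lemma using $|\xh|u_0\in\B$ and the decay estimates of Theorem~\ref{thm:main_1}. Second, the Leray projection $\HP$ is nonlocal in all three variables and does not respect the horizontal structure, so showing that its zeroth moment cancels and that its first moment produces exactly the stated coefficient requires the careful Fourier computation at $\xih=0$ indicated above, carried out in the mixed-norm space $\B$. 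Once these two points are secured, the remaining estimates are the routine splittings of the Duhamel integral described above, with the logarithmic losses traced to the borderline temporal decay of $\int_{\mathbb{R}^2}(u_3\uh)\,d\yh$.
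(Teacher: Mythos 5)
Your skeleton --- moment expansion of the $2$D Gaussian, identification of the profiles by evaluating the Fourier symbol of the nonlinearity at $\xih=0$, and weight propagation from $|\xh|u_0\in\B$ as the technical heart --- matches the paper, and your observation that $\nabla\cdot u_0=0$ together with $u_{0,3}\in L^1(\mathbb{R}^3)$ forces $\int_{\mathbb{R}^2}u_{0,3}(\yh,x_3)\,d\yh\equiv 0$ is correct (the paper does not even use it). The genuine gap is your treatment of the Leray projection. You compute the zeroth and first horizontal moments of $\HP\nabla\cdot(u\otimes u)$ at $\xih=0$ and then declare that ``the remaining estimates are the routine splittings of the Duhamel integral.'' They are not, at precisely the exponents the theorem asserts: \eqref{main_2:asymp_uh} includes $p=\infty$ and \eqref{main_2:asymp_u3} includes $p=1,\infty$, where $\HP$ is unbounded, and since $e^{(t-\tau)\Deltah}$ smooths only horizontally it does not regularize the $\xi_3$-dependence of the symbol $\xi_j\xi_k/|\xi|^2$. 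So the remainder left after subtracting the moment profile --- and, a fortiori, the weighted mixed norms $\||\xh|\cdot\|_{\Lh^1\Lv^{\infty}}$ that your own preliminary lemma must control --- cannot be estimated with the bare projection. This is exactly why the paper devotes Section \ref{s2} to eliminating $\HP$ altogether: following \cite{FM}, the projected nonlinearity is rewritten through the explicit kernel $K(t,x)=\int_0^{\infty}\Gh(t+s,\xh)\Gv(s,x_3)\,ds$ (Lemma \ref{lemma:K}, Proposition \ref{prop:integral_equation}), and the mixed-norm decay of $\nablah^{\beta}(-\Deltah)^{\gamma/2}K(t)$ (Lemma \ref{lemm:Linear_est_3}) is what makes the $L^1$, $L^{\infty}$ and weighted estimates run. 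Your plan needs this representation (or an equivalent one) as a genuinely new ingredient, not a routine step.

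The weight-propagation lemma is also underdetermined as proposed. You would prove it ``using $|\xh|u_0\in\B$ and the decay estimates of Theorem \ref{thm:main_1}'', but those inputs do not close: the product estimates pair the weighted factor with $\Lh^{\infty}\Lv^1$-norms of $u$, and the bound $\|\nabla^{\alpha}u(t)\|_{\Lh^{\infty}\Lv^1}\leqslant Ct^{-1-\frac{|\alphah|}{2}}\|u_0\|_{X^s}$ is not part of Theorem \ref{thm:main_1}; it requires a separate bootstrap (Lemma \ref{lemm:Duha_est_4} and Proposition \ref{prop:Lh^infLv1-est}), which is where $s\geqslant 9$ enters through the interpolation inequalities (\ref{GW})--(\ref{GW2}) --- your proposal never accounts for this hypothesis. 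Moreover, propagating the weight directly through the quadratic Duhamel term loses a vertical derivative ($\Nhh_1$ carries $\partial_3$); the paper avoids this by linearizing, i.e.\ solving the auxiliary equation (\ref{eq:linear_integral_eq}) by contraction in the weighted space $Y$ and identifying the fixed point with $u$ via uniqueness in $L^{\infty}(0,\infty;\Lh^1\Lv^{\infty})$ (Proposition \ref{prop:weight}). Finally, a bookkeeping slip: the $\log t$ in \eqref{main_2:asymp_uh} does not come from a borderline tail of $\int_{\mathbb{R}^2}(u_3\uh)\,d\yh$ --- in the relevant norm $\|\partial_3(u_3\uh)(\tau)\|_{\Lh^1\Lv^p}\leqslant C(1+\tau)^{-\frac{3}{2}}$ is integrable and the pieces with $\tau\sim t$ contribute no logarithm --- but from the first-moment error on $(0,t/2)$, where $\||\yh|\partial_3(u_3\uh)(\tau)\|_{\Lh^1\Lv^p}\sim\tau^{-\frac{1}{p}}(1+\tau)^{-(1-\frac{1}{p})}$ decays exactly like $\tau^{-1}$ (the term $I_2$ in the proof of Lemma \ref{lemm:Duha_expa_1}); the non-integrability of $\tau^{-\frac{1}{p}}$ at $\tau=0$ when $p=1$ is also the reason $p=1$ is excluded from \eqref{main_2:asymp_uh}.
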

\begin{rem}\label{rem:main_2}
	\begin{itemize}
		\item [(1)]
		The regularity assumption $s\geqslant 9$ is necessary only for the proof of (\ref{main_2:asymp_uh}).
		It is possible to prove (\ref{main_2:asymp_u3}) and (\ref{main_2:asymp_u3_second}) under the weaker assumption $s\geqslant 5$, which is the same regularity condition as in Theorem \ref{thm:main_1}.
		\item [(2)]
		By $t^{-\frac{3}{2}(1- \frac{1}{p})-\frac{1}{2p}}=t^{-(1- \frac{1}{p})- \frac{1}{2}}$, (\ref{main_2:asymp_u3})
		and (\ref{main_2:asymp_u3_second}), we see that the second leading term of $u_3(t)$ decays as the $2$D heat kernel with the horizontal gradient $\nablah e^{t\Deltah}$,
		while the leading term of $u_3(t)$ decays like the $3$D heat kernel $e^{t\Delta}$.
		\item [(3)]
		The estimate (\ref{main_2:asymp_uh}) with $p=1$ holds if we additionally assume $\nabla^{\alpha}u_0 \in \Lh^{\infty}\Lv^1(\mathbb{R}^3)$ ($|\alpha|\leqslant 1$).
		See Remarks \ref{rem:Duha_ exp} and \ref{rem:Lh^infLv1-est}.
		\item [(4)]
		By Proposition \ref{prop:integral_equation} and Lemma \ref{lemm:Duha_est_1} below, the nonlinear effect in the asymptotic expansion (\ref{main_2:asymp_u3_second})
		appears only from the the first Duhamel term $\Nvv_1[u]$ of $u_3(t)$.
		It follows from Lemma \ref{lemm:Duha_est_1} below that
		$L^1$ decay rates of all nonlinear terms $\Nvv_m[u](t)$ ($m=1,2,3$) of $u_3(t)$ are equals except for $\log t$.
		Therefore, in order to accurate the approximation (\ref{main_2:asymp_u3_second}) with $p=1$,
		we need not only information for $\Nvv_1[u](t)$ but information for all $\Nvv_m[u](t)$ ($m=1,2,3$).
	\end{itemize}
\end{rem}
As a corollary of (\ref{main_2:asymp_u3_second}), we see that the limit (\ref{main_1:asymp_u3}) fails if $p=\infty$.
\begin{cor}\label{cor}
	There exists a suitable initial data $u_0\in \mathscr{S}(\mathbb{R}^3)$ such that the corresponding solution $u$ satisfies
	\begin{align*}
		\liminf_{t\to \infty}
		t^{\frac{3}{2}}
		&\left\|
		u_3(t,x)
		-\Gh(t,\xh)\int_{\mathbb{R}^2}u_{0,3}(\yh,x_3)d\yh
		\right\|_{L^{\infty}_x}>0.
	\end{align*}
\end{cor}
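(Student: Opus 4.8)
The plan is to deduce the Corollary from the second-order expansion \eqref{main_2:asymp_u3_second} taken at $p=\infty$, for which the exponent $\frac{3}{2}(1-\frac1p)+\frac{1}{2p}$ becomes $\frac32$. Abbreviate the three $x_3$-profiles that appear there by
\begin{align*}
	A(x_3):=\int_{\mathbb{R}^2}u_{0,3}(\yh,x_3)\,d\yh,\quad
	B(x_3):=\int_{\mathbb{R}^2}\yh\,u_{0,3}(\yh,x_3)\,d\yh,\quad
	D(x_3):=\int_0^{\infty}\!\!\int_{\mathbb{R}^2}(u_3\uh)(\tau,\yh,x_3)\,d\yh\,d\tau.
\end{align*}
Then \eqref{main_2:asymp_u3_second} with $p=\infty$ is equivalent to
\begin{align*}
	u_3(t,x)-\Gh(t,\xh)A(x_3)=\nablah\Gh(t,\xh)\cdot\big(D(x_3)-B(x_3)\big)+o(t^{-\frac32})\qquad\text{in }L^\infty_x.
\end{align*}
Hence it suffices to exhibit one admissible datum for which $D-B\not\equiv0$ and to compute the $L^\infty$-size of the leading profile $\nablah\Gh(t,\xh)\cdot(D-B)(x_3)$.

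For the latter I would use $\nablah\Gh(t,\xh)=-\frac{\xh}{2t}\Gh(t,\xh)$. For a fixed vector $v\in\mathbb{R}^2$ the map $\xh\mapsto\nablah\Gh(t,\xh)\cdot v=-\frac{\xh\cdot v}{2t}\Gh(t,\xh)$ attains its largest modulus when $\xh$ is parallel to $v$ with $|\xh|=\sqrt{2t}$, giving $\sup_{\xh}|\nablah\Gh(t,\xh)\cdot v|=c_0\,|v|\,t^{-\frac32}$ with the explicit constant $c_0=\frac{\sqrt2\,e^{-1/2}}{8\pi}$. Taking the supremum over $x_3$ as well yields $\|\nablah\Gh(t,\cdot)\cdot(D-B)(\cdot)\|_{L^\infty_x}=c_0\,t^{-\frac32}\|D-B\|_{L^\infty_{x_3}}$. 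Combining this with the reverse triangle inequality and the $o(t^{-3/2})$ remainder above gives
\begin{align*}
	\liminf_{t\to\infty}t^{\frac32}\Big\|u_3(t,x)-\Gh(t,\xh)A(x_3)\Big\|_{L^\infty_x}\geqslant c_0\,\|D-B\|_{L^\infty_{x_3}},
\end{align*}
so the Corollary follows once $\|D-B\|_{L^\infty_{x_3}}>0$.

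It remains to construct a divergence-free $u_0\in\mathscr S(\mathbb{R}^3)$ with $D-B\not\equiv0$. The main obstacle is that $D$ is an implicit space-time integral of the full nonlinear solution and cannot be evaluated explicitly, so $D-B\not\equiv0$ cannot be checked by direct computation. I would circumvent this by a scaling argument: fix a divergence-free $u_0^\ast\in\mathscr S(\mathbb{R}^3)$ and set $u_0=\lambda u_0^\ast$. The linear profile is homogeneous of degree one, $B=\lambda B^\ast$, whereas $D$ is quadratic in the amplitude, $D=O(\lambda^2)$, since $u_3\uh$ depends quadratically on the solution and the time integral converges by the decay estimates \eqref{main_1:uhL^p_decay}--\eqref{main_1:u3L^p_decay} of Theorem \ref{thm:main_1} (which already guarantee $D$ is well defined). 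Therefore $D-B=-\lambda B^\ast+O(\lambda^2)\not\equiv0$ for all sufficiently small $\lambda>0$, provided the vertical first moment $B^\ast(x_3)=\int_{\mathbb{R}^2}\yh\,u^\ast_{0,3}(\yh,x_3)\,d\yh$ is not identically zero.

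Finally, such a $u_0^\ast$ is easy to produce. Writing $u_0^\ast=\nabla\times\Psi$ with $\Psi\in\mathscr S(\mathbb{R}^3;\mathbb{R}^3)$ forces $\nabla\cdot u_0^\ast=0$, and integration by parts gives $B^\ast(x_3)=\big(-\int_{\mathbb{R}^2}\Psi_2\,d\yh,\ \int_{\mathbb{R}^2}\Psi_1\,d\yh\big)$; choosing, for instance, $\Psi=(\Psi_1,0,0)$ with $\int_{\mathbb{R}^2}\Psi_1(\yh,x_3)\,d\yh\not\equiv0$ makes $B^\ast\not\equiv0$. For $\lambda$ small the hypotheses $\|u_0\|_{X^s}\leqslant\delta_2$ and $|\xh|u_0\in L^1(\mathbb{R}^2_{\xh};(L^1\cap L^\infty)(\mathbb{R}_{x_3}))$ of Theorem \ref{thm:main_2} hold automatically, so the expansion \eqref{main_2:asymp_u3_second} applies and the proof concludes. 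The only genuinely delicate point is the quadratic scaling of $D$; everything else is a direct computation with the Gaussian.
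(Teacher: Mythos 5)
Your proposal is correct and takes essentially the same route as the paper: both deduce the result from \eqref{main_2:asymp_u3_second} with $p=\infty$ via the reverse triangle inequality, and both choose small-amplitude divergence-free data so that the linear moment term $\nablah\Gh(t,\xh)\cdot\int_{\mathbb{R}^2}\yh u_{0,3}\,d\yh$, of size $c\,\eta\, t^{-\frac{3}{2}}$, dominates the quadratic Duhamel contribution $\nablah\Gh\cdot\int_0^\infty\int_{\mathbb{R}^2}(u_3\uh)\,d\yh\,d\tau$, of size $C\eta^2 t^{-\frac{3}{2}}$, which is exactly the paper's choice $0<\eta\leqslant C_6/(2C_7)$. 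Your curl-potential construction with $\Psi=(e^{-|x|^2},0,0)$ even reproduces the paper's explicit datum $\phi=(0,-x_3e^{-|x|^2},x_2e^{-|x|^2})$ up to a factor of $2$.
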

This paper is organized as follows.
In Section \ref{s2}, we give a decomposition of the integral equation corresponding to (\ref{eq:ANS_1}), which is the key ingredient of our analysis.
In Section \ref{s3}, we prepare linear estimates.
Nonlinear decay estimates are established in Section \ref{s4}.
In Section \ref{s5}, we present the proof of Theorem \ref{thm:main_1}.
In Section \ref{s6}, we establish the additional estiamtes for the solution.
Finally, in Section \ref{s7}, we prove Theorem \ref{thm:main_2} and Corollary \ref{cor}.
\subsection*{Notation}
At the end of this section, we summarize notation used in this paper.
For given $3$D vector $a=(a_1,a_2,a_3)\in \mathbb{R}^3$,
we denote by $a_{\h}=(a_1,a_2)$ the horizontal components of $a$
and $a_3$ is called the vertical component or the third component of $a$.
The operator $\Deltah:=\partial_1^2+\partial_2^2$ denotes the horizontal Laplacian
and
$\nablah=(\partial_1,\partial_2)$ represents the horizontal gradient.
For $s \in \mathbb{N}$, we define a function space $X^s(\mathbb{R}^3)$ by
\begin{align*}
	X^s(\mathbb{R}^3)
	&:=H^s(\mathbb{R}^3)\cap L^1(\mathbb{R}^2_{\xh};(W^{1,1}\cap W^{1,\infty})(\mathbb{R}_{x_3})).
\end{align*}
Let $\Gh(t,\xh)$ be the $2$D Gaussian:
\begin{align*}
	\Gh(t,\xh):=(4\pi t)^{-1}e^{-\frac{|\xh|^2}{4t}}, \qquad (t, \xh)=(t,x_1,x_2)\in (0,\infty)\times \mathbb{R}^2.
\end{align*}
For $1\leqslant p,q\leqslant \infty$, we define the anisotropic Lebesgue space by
\begin{align*}
	\Lh^p\Lv^q(\mathbb{R}^3):=L^p(\mathbb{R}^2_{\xh};L^q(\mathbb{R}_{x_3})).
\end{align*}
For $d\in \mathbb{N}$, let $\mathscr{F}_{\mathbb{R}^d}[f]$ and $\mathscr{F}^{-1}_{\mathbb{R}^d}[f]$
be the Fourier transform and the inverse Fourier transform of a function $f$ on $\mathbb{R}^d$.
We denote by $C$ the constant, which may differ in each line.
In particular, $C=C(a_1,...,a_n)$ means that $C$ depends only on $a_1,...,a_n$.

\section{Integral Equations}\label{s2}
In this section, we consider the integral equation corresponding to (\ref{eq:ANS_1}):
\begin{align}\label{Int-1}
	u(t)=e^{t\Deltah}u_0-\int_0^te^{(t-\tau)\Deltah}\mathbb{P}\nabla\cdot(u\otimes u)(\tau)d\tau,
\end{align}
where $\mathbb{P}=(\delta_{kl}+R_kR_l)_{1\leqslant k,l\leqslant 3}$ denotes the Helmholtz projection and $\{R_k\}_{k=1}^3$ represents the $3$D Riesz transform.
The unboundedness of the operator $\mathbb{P}$ on $L^1(\mathbb{R}^3)$ and $L^{\infty}(\mathbb{R}^3)$
prevents us from calculating $\|u(t)\|_{L^1}$ and $\|u(t)\|_{L^{\infty}}$ via the integral equation (\ref{Int-1}).
To overcome this, we follow the idea of \cite{FM} and decompose the nonlinear term as follows.
The $j$-th component of $e^{(t-\tau)\Deltah}\mathbb{P}\nabla\cdot(u\otimes u)(\tau)$ is given by
\begin{align}\label{j-th nonlinear}
	\begin{split}
		&\left(e^{(t-\tau)\Deltah}\mathbb{P}\nabla\cdot(u\otimes u)(\tau)\right)_j\\
		&\quad
		=\sum_{k=1}^3
		e^{(t-\tau)\Deltah}\partial_k(u_ku_j)(\tau)
		+
		\sum_{k,l=1}^3\partial_j\partial_k\partial_l \mathscr{F}_{\mathbb{R}^3}^{-1}\left[|\xi|^{-2}e^{-(t-\tau)|\xih|^2}\right]*(u_ku_l)(\tau)\\
		&\quad
		=\sum_{k=1}^3
		e^{(t-\tau)\Deltah}\partial_k(u_ku_j)(\tau)
		+
		\sum_{k,l=1}^2\partial_j\partial_k\partial_lK(t-\tau)*(u_ku_l)(\tau)\\
		&\qquad+
		2\sum_{k=1}^2\partial_j\partial_k\partial_3K(t-\tau)*(u_ku_3)(\tau)
		+
		\partial_j\partial_3\partial_3K(t-\tau)*(u_3(\tau)^2)\\
	\end{split}
\end{align}
where
$K(t,x):=\mathscr{F}_{\mathbb{R}^3}^{-1}\left[|\xi|^{-2}e^{-t|\xih|^2}\right](x)$.
Let us derive the explicit formula for the function $K(t,x)$.
\begin{lemm}\label{lemma:K}
	The following formula holds:
	\begin{align*}
		K(t,x)
		&=
		\mathscr{F}_{\mathbb{R}^2}^{-1}\left[\frac{1}{2|\xih|}e^{-t|\xih|^2}e^{-|\xih||x_3|}\right](\xh)\\
		&=
		\int_0^{\infty}\frac{e^{-\frac{|\xh|^2}{4(t+s)}}}{4\pi(t+s)}\frac{e^{-\frac{x_3^2}{4s}}}{(4\pi s)^{\frac{1}{2}}}ds\\
		&=
		\int_0^{\infty}\Gh(t+s,\xh)\Gv(s,x_3)ds,
	\end{align*}
	where $\Gv(s,x_3)$ is the $1$D Gaussian.
\end{lemm}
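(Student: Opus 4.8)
The plan is to evaluate the three-dimensional inverse Fourier transform defining $K$ one frequency variable at a time, beginning with the vertical frequency $\xi_3$. Writing $|\xi|^2=|\xih|^2+\xi_3^2$, for each fixed $\xih\neq 0$ the symbol
$$
\frac{e^{-t|\xih|^2}}{|\xi|^2}=\frac{e^{-t|\xih|^2}}{|\xih|^2+\xi_3^2}
$$
depends on $\xi_3$ only through the factor $(|\xih|^2+\xi_3^2)^{-1}$. First I would invoke the classical one-dimensional identity $\mathscr{F}_{\mathbb{R}}^{-1}\left[(a^2+\xi_3^2)^{-1}\right](x_3)=\frac{1}{2a}e^{-a|x_3|}$, valid for $a>0$, with $a=|\xih|$, so that the partial inverse transform in $\xi_3$ produces the factor $\frac{1}{2|\xih|}e^{-|\xih||x_3|}$. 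Carrying out the remaining inverse transform in $\xih$ yields the first asserted formula
$$
K(t,x)=\mathscr{F}_{\mathbb{R}^2}^{-1}\left[\frac{1}{2|\xih|}e^{-t|\xih|^2}e^{-|\xih||x_3|}\right](\xh).
$$

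Next I would pass from this to the Gaussian representation using a subordination identity. The key is
$$
\frac{1}{2|\xih|}e^{-|\xih||x_3|}=\int_0^{\infty}\frac{e^{-\frac{x_3^2}{4s}}}{(4\pi s)^{\frac12}}e^{-s|\xih|^2}ds,
$$
which follows from the standard integral $\int_0^{\infty}s^{-\frac12}e^{-\alpha s-\beta/s}ds=\sqrt{\pi/\alpha}\,e^{-2\sqrt{\alpha\beta}}$ applied with $\alpha=|\xih|^2$ and $\beta=x_3^2/4$, so that $2\sqrt{\alpha\beta}=|\xih||x_3|$. Multiplying by $e^{-t|\xih|^2}$ gives
$$
\frac{1}{2|\xih|}e^{-t|\xih|^2}e^{-|\xih||x_3|}=\int_0^{\infty}\frac{e^{-\frac{x_3^2}{4s}}}{(4\pi s)^{\frac12}}e^{-(t+s)|\xih|^2}ds.
$$
Since $\mathscr{F}_{\mathbb{R}^2}^{-1}\left[e^{-(t+s)|\xih|^2}\right](\xh)=\Gh(t+s,\xh)=\frac{e^{-\frac{|\xh|^2}{4(t+s)}}}{4\pi(t+s)}$ is exactly the two-dimensional heat kernel with our normalization, interchanging the $s$-integral with the inverse Fourier transform in $\xih$ produces the second formula, and recognizing the $s$-integrand as $\Gh(t+s,\xh)\Gv(s,x_3)$ gives the third.

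The hard part will be the rigorous justification of these manipulations rather than the formal computation. Since $|\xi|^{-2}$ is not integrable on $\mathbb{R}^3$, the object $K(t,\cdot)$ is a priori only a tempered distribution, so I would read the iterated partial transforms either distributionally or through a regularizing limit, and then verify that the resulting expressions are genuine, locally integrable functions for $t>0$. The interchanges of integration (the $\xi_3$-integration against the outer $\xih$-transform, and the $s$-integral against the $\xih$-transform) must be secured by Fubini, which requires checking absolute integrability: the factor $e^{-t|\xih|^2}$ supplies decay at large $|\xih|$, while near $\xih=0$ the singularity $|\xih|^{-1}$ is integrable in two dimensions (in polar coordinates $|\xih|^{-1}\,d\xih\sim dr$), and the Gaussian $e^{-x_3^2/(4s)}(4\pi s)^{-1/2}$ controls the $s$-integral near $s=0$ and $s=\infty$. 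Care with the $2\pi$ normalization constants throughout is what makes the final Gaussians come out precisely as $\Gh$ and $\Gv$.
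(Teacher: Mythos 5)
Your proposal is correct and follows essentially the same route as the paper: the first identity via the one-dimensional inverse Fourier transform of $(\xi_3^2+|\xih|^2)^{-1}$, and the Gaussian formula via a heat-kernel (Laplace) representation of the singular factor combined with Fubini. The only cosmetic difference is that the paper inserts $|\xi|^{-2}=\int_0^{\infty}e^{-s|\xi|^2}\,ds$ directly into the three-dimensional Fourier integral and then factors the transforms, whereas you apply the equivalent subordination identity $\frac{1}{2|\xih|}e^{-|\xih||x_3|}=\int_0^{\infty}(4\pi s)^{-\frac{1}{2}}e^{-\frac{x_3^2}{4s}}e^{-s|\xih|^2}\,ds$ to the two-dimensional symbol already obtained in the first step; the two computations are identical up to the order of operations.
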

\begin{proof}
	Using the formula
	\begin{align*}
		\int_{\mathbb{R}}\frac{e^{ix_3\xi_3}}{\xi_3^2+|\xih|^2}d\xi_3
		=
		\frac{\pi}{|\xih|}e^{-|\xih||x_3|},
	\end{align*}
	we have
	\begin{align*}
		K(t,x)
		&=
		\frac{1}{(2\pi)^3}
		\int_{\mathbb{R}^2}
		e^{i\xh \cdot \xih}
		e^{-t|\xih|^2}
		\int_{\mathbb{R}}\frac{e^{ix_3\xi_3}}{\xi_3^2+|\xih|^2}
		d\xi_3
		d\xih\\
		&=
		\frac{1}{2(2\pi)^2}
		\int_{\mathbb{R}^2}
		e^{i\xh \cdot \xih}
		\frac{1}{|\xih|}
		e^{-t|\xih|^2}
		e^{-|\xih||x_3|}
		d\xih\\
		&=
		\mathscr{F}_{\mathbb{R}^2}^{-1}\left[\frac{1}{2|\xih|}e^{-t|\xih|^2}e^{-|\xih||x_3|}\right](\xh).
	\end{align*}
	On the other hand, we have
	\begin{align*}
		K(t,x)
		&=
		\frac{1}{(2\pi)^3}
		\int_{\mathbb{R}^2}
		e^{i\xh \cdot \xih}
		e^{-t|\xih|^2}
		\int_{\mathbb{R}}\frac{e^{ix_3\xi_3}}{\xi_3^2+|\xih|^2}
		d\xi_3
		d\xih\\
		&=
		\frac{1}{(2\pi)^3}
		\int_{\mathbb{R}^2}
		e^{i\xh \cdot \xih}
		e^{-t|\xih|^2}
		\int_{\mathbb{R}}e^{ix_3\xi_3}\int_0^{\infty}e^{-s(\xi_3^2+|\xih|^2)}
		ds
		d\xi_3
		d\xih\\
		&=
		\int_0^{\infty}
		\frac{1}{(2\pi)^2}
		\int_{\mathbb{R}^2}
		e^{i\xh \cdot \xih}
		e^{-(t+s)|\xih|^2}
		d\xih
		\frac{1}{2\pi}
		\int_{\mathbb{R}}
		e^{ix_3\xi_3}e^{-s\xi_3^2}
		d\xi_3
		ds
		\\
		&=
		\int_0^{\infty}
		\mathscr{F}_{\mathbb{R}^2}^{-1}\left[e^{-(t+s)|\xih|^2}\right](\xh)
		\mathscr{F}_{\mathbb{R}}^{-1}\left[e^{-s|\xi_3|^2}\right](x_3)
		ds\\
		&=
		\int_0^{\infty}
		\frac{e^{-\frac{|\xh|^2}{4(t+s)}}}{4\pi(t+s)}
		\frac{e^{-\frac{x_3^2}{4s}}}{(4\pi s)^{\frac{1}{2}}}
		ds.
	\end{align*}
	This completes the proof.
\end{proof}
By Lemma \ref{lemma:K}, we obtain
\begin{align}\label{del3K}
	\begin{split}
		\partial_3K(t,x)
		&=
		\mathscr{F}_{\mathbb{R}^2}^{-1}\left[\frac{1}{2|\xih|}e^{-t|\xih|^2}\partial_{x_3}(e^{-|\xih||x_3|})\right](\xh)\\
		&=
		-{\rm sgn}(x_3)\mathscr{F}_{\mathbb{R}^2}^{-1}\left[|\xih|\frac{1}{2|\xih|}e^{-t|\xih|^2}e^{-|\xih||x_3|}\right](\xh)\\
		&=
		-{\rm sgn}(x_3)(-\Deltah)^{\frac{1}{2}}K(t,x)
	\end{split}
\end{align}
and
\begin{equation*}
	\partial_3\partial_3K(t,x)
	=
	-\delta(x_3)\mathscr{F}_{\mathbb{R}^2}^{-1}\left[e^{-t|\xih|^2}e^{-|\xih||x_3|}\right](\xh)
	-\Deltah K(t,x),
\end{equation*}
which implies
\begin{align}\label{del33K}
	\partial_3\partial_3K(t)*f
	=-e^{t\Deltah}f-\Deltah K(t)*f.
\end{align}
Here, $\delta(x_3)$ is the Dirac distribution on $\mathbb{R}_{x_3}$.
It follows from (\ref{del3K}) and (\ref{del33K}) that
\begin{align}\label{del333K}
	\begin{split}
		\partial_3\partial_3\partial_3K(t)*f
		&=-e^{t\Deltah}\partial_3f-\Deltah \partial_3K(t)*f\\
		&=-e^{t\Deltah}\partial_3f-\left({\rm sgn}(x_3')(-\Deltah)^{\frac{3}{2}}K(t,x')\right)*f.
	\end{split}
\end{align}
Hence, by (\ref{j-th nonlinear}), (\ref{del3K}), (\ref{del33K}) and (\ref{del333K}), we see that
\begin{align*}
	\left(e^{(t-\tau)\Deltah}\mathbb{P}\nabla\cdot(u\otimes u)(\tau)\right)_j
	&
	=
	\partial_3e^{(t-\tau)\Deltah}(u_3u_j)(\tau)
	+
	\sum_{k=1}^2
	\partial_ke^{(t-\tau)\Deltah}(u_ku_j)(\tau)\\
	&\quad
	+
	\sum_{k,l=1}^2\partial_j\partial_k\partial_lK(t-\tau)*(u_ku_l)(\tau)\\
	&\quad
	-
	2\sum_{k=1}^2\left({\rm sgn}(x_3')\partial_j\partial_k(-\Deltah)^{\frac{1}{2}}K(t-\tau,x')\right)*(u_ku_3)(\tau)\\
	&\quad
	-
	\partial_je^{(t-\tau)\Deltah}(u_3(\tau)^2)
	-
	\partial_j\Deltah K(t-\tau)*(u_3(\tau)^2)
\end{align*}
for $j=1,2$ and
\begin{align*}
	\left(e^{(t-\tau)\Deltah}\mathbb{P}\nabla\cdot(u\otimes u)(\tau)\right)_3
	&
	=
	-
	\sum_{k=1}^2\partial_ke^{(t-\tau)\Deltah}(u_ku_3)(\tau)\\
	&\quad
	-
	\sum_{k,l=1}^2\left({\rm sgn}(x_3')\partial_k\partial_l(-\Deltah)^{\frac{1}{2}}K(t-\tau,x')\right)*(u_ku_l)(\tau)\\
	&\quad
	-
	2\sum_{k=1}^2\partial_k\Deltah K(t-\tau)*(u_ku_3)(\tau)\\
	&\quad
	-
	\left({\rm sgn}(x_3')(-\Deltah)^{\frac{3}{2}}K(t-\tau,x')\right)*(u_3(\tau)^2).
\end{align*}
Therefore, we obtain the following proposition:
\begin{prop}\label{prop:integral_equation}
	Let $u$ be a solution to (\ref{Int-1}).
	Then, the following formula holds:
	\begin{align}\label{eq:Int-2}
		\begin{cases}
			\uh(t)
			=
			e^{t\Deltah}u_{0,\h}
			+\displaystyle\sum_{m=1}^5 \Nhh_m[u](t),\\
			u_3(t)
			=
			e^{t\Deltah}u_{0,3}
			+\displaystyle\sum_{m=1}^3\Nvv_m[u](t),
		\end{cases}
	\end{align}
	where
	\begin{align*}
		\Nhh_1[u](t)&:=-\int_0^te^{(t-\tau)\Deltah}\partial_{3}(u_3\uh)(\tau)d\tau,\\
		\Nhh_2[u](t)&:=-\int_0^te^{(t-\tau)\Deltah}\nablah \cdot (\uh\otimes\uh)(\tau)d\tau,\\
		\Nhh_3[u](t)&:=\int_0^t\nablah e^{(t-\tau)\Deltah}(u_3(\tau)^2)d\tau,\\
		\Nhh_4[u](t)&:=-\sum_{k,l=1}^2\int_0^t\nablah\partial_{k}\partial_{l}K(t-\tau)*(u_ku_l)(\tau)d\tau,\\
		\Nhh_5[u](t)&:=2\sum_{k=1}^2\int_0^t\nablah \partial_k(-\Deltah)^{\frac{1}{2}}\widetilde{K}(t-\tau)*(u_3u_k)(\tau)d\tau+\int_0^t\nablah \Deltah K(t-\tau)*(u_3(\tau)^2)d\tau
	\end{align*}
	and
	\begin{align*}
		\Nvv_1[u](t)&:=\int_0^te^{(t-\tau)\Deltah}\nablah\cdot(u_3\uh)(\tau)d\tau,\\
		\Nvv_2[u](t)&:=\sum_{k,l=1}^2\int_0^t(-\Deltah)^{\frac{1}{2}}\partial_{k}\partial_{l}\widetilde{K}(t-\tau)*(u_ku_l)(\tau)d\tau,\\
		\Nvv_3[u](t)&:=2\sum_{k=1}^2\int_0^t\partial_k\Deltah K(t-\tau)*(u_3u_k)(\tau)d\tau+\int_0^t(-\Deltah)^{\frac{3}{2}}\widetilde{K}(t-\tau)*(u_3(\tau)^2)d\tau.
	\end{align*}
	Here, $K(t,x)$ and $\widetilde{K}(t,x)$ are the functions with the following representaions:
	\begin{align*}
		&K(t,x)=\int_0^{\infty}\frac{e^{-\frac{|\xh|^2}{4(t+s)}}}{4\pi(t+s)}\frac{e^{-\frac{x_3^2}{4s}}}{(4\pi s)^{\frac{1}{2}}}ds
		=\int_0^{\infty}\Gh(t+s,\xh)\Gv(s,x_3)ds,\\
		&\widetilde{K}(t,x)={\rm sgn}(x_3)K(t,x).
	\end{align*}
\end{prop}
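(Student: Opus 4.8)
The plan is to take the two component-wise expansions of $e^{(t-\tau)\Deltah}\mathbb{P}\nabla\cdot(u\otimes u)(\tau)$ derived immediately above the statement --- one formula for the horizontal indices $j=1,2$ and one for the vertical index $j=3$ --- substitute them into the integral equation (\ref{Int-1}), integrate in $\tau$ over $(0,t)$, and match the resulting groups of terms with the definitions of $\Nhh_m$ ($m=1,\dots,5$) and $\Nvv_m$ ($m=1,2,3$). Since the genuine analytic input (the explicit form of $K$ in Lemma \ref{lemma:K} and the reduction of vertical derivatives of $K$ via (\ref{del3K})--(\ref{del333K})) is already in place, what remains is essentially a careful bookkeeping of constants, signs, and operator orderings.

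For the horizontal part I would read off the horizontal component of (\ref{Int-1}), namely $\uh(t)=e^{t\Deltah}u_{0,\h}-\int_0^t(e^{(t-\tau)\Deltah}\mathbb{P}\nabla\cdot(u\otimes u)(\tau))_{\h}\,d\tau$, and process the six summands of the $j=1,2$ identity one at a time. Three of them carry the semigroup $e^{(t-\tau)\Deltah}$ together with a single derivative $\partial_3$ or $\partial_k$ ($k=1,2$); since $\Deltah$ involves only horizontal variables these derivatives commute with $e^{(t-\tau)\Deltah}$, and after the global minus sign in (\ref{Int-1}) they assemble into $\Nhh_1$, $\Nhh_2$ (recognizing $\sum_{k=1}^2\partial_k(u_ku_j)=(\nablah\cdot(\uh\otimes\uh))_j$) and $\Nhh_3$. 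The remaining three summands involve $K$; here I would use that the horizontal operators $\partial_j,\partial_k,(-\Deltah)^{1/2}$ commute with multiplication by $\mathrm{sgn}(x_3')$, so that $\mathrm{sgn}(x_3')\partial_j\partial_k(-\Deltah)^{1/2}K=\partial_j\partial_k(-\Deltah)^{1/2}\widetilde{K}$, which produces $\Nhh_4$ and $\Nhh_5$.

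The vertical component is handled identically, starting from $u_3(t)=e^{t\Deltah}u_{0,3}-\int_0^t(e^{(t-\tau)\Deltah}\mathbb{P}\nabla\cdot(u\otimes u)(\tau))_3\,d\tau$ and the four summands of the $j=3$ identity. The semigroup summand yields $\Nvv_1$ after writing $\sum_{k=1}^2\partial_k(u_ku_3)=\nablah\cdot(u_3\uh)$, while the three kernel summands yield $\Nvv_2$ and $\Nvv_3$ once $\mathrm{sgn}(x_3')$ is absorbed into $\widetilde{K}$ as above (using $\mathrm{sgn}(x_3')(-\Deltah)^{3/2}K=(-\Deltah)^{3/2}\widetilde{K}$ for the $u_3^2$ contribution).

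There is no serious obstacle remaining at the level of the Proposition itself: the conceptual difficulty was converting the nonlocal pressure (Riesz) term into the anisotropic building blocks $e^{t\Deltah}$, $K$ and $\widetilde{K}$, and this was already accomplished before the statement through Lemma \ref{lemma:K} and (\ref{del33K})--(\ref{del333K}). What is left is to keep the signs and the factor $2$ in the mixed horizontal--vertical terms straight, to confirm the commutation of the horizontal Fourier multipliers with $\mathrm{sgn}(x_3')$ and with the $\tau$-integration, and to justify interchanging the time integral with the convolutions --- all legitimate under the regularity $u\in C([0,\infty);X^s)$. The most error-prone point is precisely this sign and ordering bookkeeping in the terms built from $K$ and $\widetilde{K}$, so I would verify those lines most carefully.
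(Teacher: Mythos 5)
Your proposal is correct and follows essentially the same route as the paper: the paper's proof of Proposition \ref{prop:integral_equation} is precisely the computation carried out before its statement, namely substituting the $j=1,2$ and $j=3$ expansions of $\left(e^{(t-\tau)\Deltah}\mathbb{P}\nabla\cdot(u\otimes u)\right)_j$ (obtained from Lemma \ref{lemma:K} and the identities (\ref{del3K})--(\ref{del333K})) into (\ref{Int-1}) and matching terms, with ${\rm sgn}(x_3')$ absorbed into $\widetilde{K}$ exactly as you describe. Your accounting of the six horizontal and four vertical summands, the commutation of horizontal multipliers with the semigroup and with ${\rm sgn}(x_3')$, and the signs and factors of $2$ all agree with the paper.
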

\begin{rem}\label{rem:integral_equation}
	\begin{itemize}
		\item [(1)]
		Compared with (\ref{Int-1}), we see that (\ref{eq:Int-2}) possesses no singular integral operator.
		Thus, (\ref{eq:Int-2}) enables us to calculate $L^1$ and $L^{\infty}$ norm of $u(t)$.
		\item [(2)]
		The above decompositions for the nonlinear terms are classified with respect to the $L^p$-decay rates.
		See Lemma \ref{lemm:Duha_est_1} below.
	\end{itemize}
\end{rem}
\section{Linear Analysis}\label{s3}
In this section, we prepare some linear estimates.
We start with the classical preoperties of heat kernels.
\begin{lemm}\label{lemm:Linear_est_1}
	\begin{itemize}
		\item [(1)]
		For each $\alpha=(\alphah,\alphav)\in (\mathbb{N}\cup\{0\})^2 \times (\mathbb{N}\cup\{0\})$ and $m\in \mathbb{N}\cup\{0\}$,
		there exists a positive constant $C=C(\alpha,m)$ such that
		\begin{align*}
			\||\xh|^m\nablah^{\alphah}\Gh(t,\xh)\|_{L^p(\mathbb{R}^2_{\xh})}\leqslant Ct^{-(1- \frac{1}{p})-\frac{|\alphah|}{2}+\frac{m}{2}}
		\end{align*}
		for all $1\leqslant p\leqslant \infty$ and $t>0$.
		In particular, we have
		\begin{equation*}
			\|\nabla^{\alpha}e^{t\Deltah}f\|_{\Lh^{p_2}\Lv^q}
			\leqslant
			Ct^{-(\frac{1}{p_1}- \frac{1}{p_2})-\frac{|\alphah|}{2}}
			\|\partial_3^{\alpha_3}f\|_{\Lh^{p_1}\Lv^q}
		\end{equation*}
		for all $t>0$, $1\leqslant p_1\leqslant p_2\leqslant \infty$, $1\leqslant q\leqslant \infty$ and all functions $f$ satisfying $\partial_3^{\alpha_3}f\in \Lh^{p_1}\Lv^q(\mathbb{R}^3)$.
		Moreover, it holds
		\begin{align*}
			\||\xh|\nabla^{\alpha}e^{t\Deltah}g\|_{\Lh^{p}\Lv^q}
			&\leqslant
			Ct^{-(\frac{1}{p_1} - \frac{1}{p})-\frac{|\alphah|}{2}+\frac{1}{2}}\|\partial_3^{\alpha_3}g\|_{\Lh^{p_1}\Lv^q}\\
			&\quad+
			Ct^{-(\frac{1}{p_2} - \frac{1}{p})-\frac{|\alphah|}{2}}\||\xh|\partial_3^{\alpha_3}g(x)\|_{\Lh^{p_2}\Lv^q}
		\end{align*}
		for all $t>0$, $1\leqslant p_1,p_2\leqslant p\leqslant \infty$, $1\leqslant q\leqslant \infty$
		and all functions $g$ satisfying
		$\partial_3^{\alpha_3}g\in \Lh^{p_1}\Lv^q(\mathbb{R}^3)$
		and
		$|\xh|\partial_3^{\alpha_3}g(x)\in \Lh^{p_2}\Lv^q(\mathbb{R}^3)$.
		\item [(2)]
		Let $1\leqslant p, q\leqslant \infty$ and $m=0,1$.
		Then, for any function $f$ satisfying $|\xh|^mf(x)\in L^1(\mathbb{R}^2_{\xh};(L^1\cap L^{\infty})(\mathbb{R}_{x_3}))$,
		it holds
		\begin{align*}
			\lim_{t\to \infty}t^{(1-\frac{1}{p})+\frac{m}{2}}&
			\left\|
				e^{t\Deltah}f(x)
				-
				\sum_{|\alphah|\leqslant m}\nablah^{\alphah}\Gh(t,\xh)\int_{\mathbb{R}^2}(-\yh)^{\alphah}f(\yh,x_3)d\yh
			\right\|_{\Lh^p\Lv^q}=0.
		\end{align*}
		\item [(3)]
		There exists a positive constant $C$ such that
		\begin{align*}
			\left\|
				e^{t\Deltah}f(x)
				-
				\Gh(t,\xh)\int_{\mathbb{R}^2}f(\yh,x_3)d\yh
			\right\|_{\Lh^p\Lv^q}
			\leqslant
			Ct^{-(1-\frac{1}{p})-\frac{1}{2}}\||\xh|f(x)\|_{\Lh^1\Lv^q}
		\end{align*}
		for all $1\leqslant p, q\leqslant \infty$ and all functions $f$ satisfying $|\xh|f(x)\in \Lh^1\Lv^q(\mathbb{R}^3)$.
	\end{itemize}
\end{lemm}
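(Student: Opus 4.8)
The plan is to treat $e^{t\Deltah}$ as the horizontal convolution $f\mapsto\Gh(t)\ast_{\xh}f$ acting only in $\xh$ with $x_3$ a parameter, so that all three assertions reduce to two-dimensional heat-kernel facts combined with Minkowski's integral inequality in the vertical variable. I would prove them in the order (1)$\Rightarrow$(3)$\Rightarrow$(2), since (2) relies on (3).

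For part (1) I would establish the pointwise Gaussian bound by scaling: writing $\xh=\sqrt{t}\,z$ gives $|\xh|^m\nablah^{\alphah}\Gh(t,\xh)=t^{-1-\frac{|\alphah|}{2}+\frac{m}{2}}\Phi(z)$ with $\Phi(z)=(4\pi)^{-1}|z|^m\nabla_z^{\alphah}e^{-|z|^2/4}$ a Schwartz function, and since $d\xh=t\,dz$ the $L^p(\mathbb{R}^2_{\xh})$ norm picks up a factor $t^{1/p}$, producing the exponent $-(1-\tfrac1p)-\tfrac{|\alphah|}{2}+\tfrac{m}{2}$ for every $1\le p\le\infty$. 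The ``in particular'' estimate then follows from $\nabla^{\alpha}e^{t\Deltah}f=\nablah^{\alphah}\Gh(t)\ast_{\xh}\partial_3^{\alpha_3}f$ (the kernel is $x_3$-independent, so $\partial_3$ commutes with the semigroup and falls on $f$): I would take the $\Lv^q$ norm first, pull it inside the horizontal convolution by Minkowski, and apply Young's inequality in $\xh$ with $\tfrac1r=1-(\tfrac1{p_1}-\tfrac1{p_2})$ together with the $m=0$ Gaussian bound $\|\nablah^{\alphah}\Gh(t)\|_{L^r(\mathbb{R}^2_{\xh})}=Ct^{-(1/p_1-1/p_2)-|\alphah|/2}$. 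The weighted estimate is the same computation after splitting $|\xh|\le|\xh-\yh|+|\yh|$: the first piece is $(|\cdot|\nablah^{\alphah}\Gh(t))\ast_{\xh}\partial_3^{\alpha_3}g$, handled by the $m=1$ Gaussian bound, and the second is $\nablah^{\alphah}\Gh(t)\ast_{\xh}(|\yh|\partial_3^{\alpha_3}g)$, handled by the $m=0$ bound.

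For part (3) I would use the first-order mean-value identity $\Gh(t,\xh-\yh)-\Gh(t,\xh)=-\int_0^1\yh\cdot\nablah\Gh(t,\xh-\theta\yh)\,d\theta$, so that $e^{t\Deltah}f-\Gh(t,\xh)\int f\,d\yh=-\int_0^1\!\int\yh\cdot\nablah\Gh(t,\xh-\theta\yh)\,f(\yh,x_3)\,d\yh\,d\theta$. Taking the $\Lh^p\Lv^q$ norm, applying Minkowski, using translation invariance $\|\nablah\Gh(t,\cdot-\theta\yh)\|_{L^p(\mathbb{R}^2_{\xh})}=\|\nablah\Gh(t)\|_{L^p(\mathbb{R}^2_{\xh})}$, and inserting the part-(1) bound $\|\nablah\Gh(t)\|_{L^p(\mathbb{R}^2_{\xh})}\le Ct^{-(1-1/p)-1/2}$, yields the claim with the weight integrated out as $\||\xh|f\|_{\Lh^1\Lv^q}$. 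Part (2) is then a density/splitting argument. Given $\varepsilon>0$, I approximate $f$ by $g:=\chi_{\{|\xh|\le R\}}f$, for which $\|(1+|\xh|^m)(f-g)\|_{L^1(\mathbb{R}^2_{\xh};(L^1\cap L^{\infty})(\mathbb{R}_{x_3}))}\to0$ as $R\to\infty$ by dominated convergence, while $g$ is compactly supported in $\xh$ and hence $|\yh|^{m+1}g\in\Lh^1\Lv^q$. Against $g$ I use the $(m{+}1)$-st order Taylor remainder of $\Gh(t,\xh-\yh)$ about $\yh=0$; since $\alphah!=1$ for $|\alphah|\le1$ the subtracted terms match exactly, and the remainder bound gives $\le Ct^{-(1-1/p)-(m+1)/2}\||\yh|^{m+1}g\|_{\Lh^1\Lv^q}$, which is $o(t^{-(1-1/p)-m/2})$. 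Against $f-g$, for $m=0$ I bound $e^{t\Deltah}(f-g)$ and $\Gh(t)\int(f-g)$ separately by part (1) to get $\le C\varepsilon\,t^{-(1-1/p)}$; for $m=1$ I group the zeroth-order difference $e^{t\Deltah}(f-g)-\Gh(t)\int(f-g)$ and estimate it by part (3), bounding the remaining first-order terms $\partial_j\Gh(t)\int(-y_j)(f-g)$ by part (1), to obtain $\le C\varepsilon\,t^{-(1-1/p)-1/2}$. Multiplying by $t^{(1-1/p)+m/2}$ and letting $t\to\infty$ and then $\varepsilon\to0$ forces the limit to $0$.

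The main obstacle is precisely the $m=1$ case of part (2): a naive term-by-term split fails because the zeroth-order leading term $\Gh(t)\int(f-g)$ carries only the slow rate $t^{-(1-1/p)}$, which after multiplication by $t^{(1-1/p)+1/2}$ diverges. The resolution is to never separate $e^{t\Deltah}(f-g)$ from its leading term but to estimate their difference as a single unit through part (3); this is what dictates the logical ordering and what makes the choice $g=\chi_{\{|\xh|\le R\}}f$ effective, since smallness of $\|(1+|\xh|)(f-g)\|$ simultaneously controls both $\|f-g\|$ and $\||\xh|(f-g)\|$ entering the two pieces.
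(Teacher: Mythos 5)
Your proof is correct; the paper omits the proof of this lemma as ``quite standard,'' and your argument---scaling for the weighted Gaussian bounds in part (1), Minkowski plus Young's inequality for the mixed-norm semigroup estimates, the first-order integral remainder for part (3), and an $(m{+}1)$-st order Taylor expansion combined with the cut-off $g=\chi_{\{|\xh|\leqslant R\}}f$ for part (2), with the key observation that for $m=1$ the zeroth-order tail must be grouped with its leading term and estimated via part (3)---is exactly that standard route, executed in the right logical order. One minor remark: for $m=1$ in part (2) your density step uses $(1+|\xh|)f\in L^1(\mathbb{R}^2_{\xh};(L^1\cap L^{\infty})(\mathbb{R}_{x_3}))$ rather than the weighted condition alone; this is the only sensible reading of the statement (otherwise the zeroth moment $\int_{\mathbb{R}^2}f(\yh,x_3)\,d\yh$ need not exist) and it holds in all of the paper's applications, so no correction is needed.
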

We omit the proof of Lemma \ref{lemm:Linear_est_1} since it is quite standard.
Next, we forcus on the enhanced dissiaption for $e^{t\Deltah}u_{0,3}$.
\begin{lemm}\label{lemm:Linear_est_2}
	\begin{itemize}
		\item [(1)]
		There exists an absolute positive constant $C$ such that
		\begin{align}\label{enhaced decay}
			\|e^{t\Deltah}u_{0,3}\|_{\Lh^p\Lv^q}
			\leqslant
			Ct^{-(1- \frac{1}{p})-\frac{1}{2}(1- \frac{1}{q})}\|u_0\|_{L^1}
		\end{align}
		for all $1\leqslant p,q\leqslant \infty$, $t>0$ and $u_0=(u_{0,1},u_{0,2},u_{0,3})\in L^1(\mathbb{R}^3)$ satisfying $\nabla \cdot u_0=0$.

		Moreover, it holds
		\begin{align}\label{W-enhaced decay}
			\||\xh|e^{t\Deltah}u_{0,3}(x)\|_{\Lh^1\Lv^{\infty}}
			\leqslant
			C\|(1+|\xh|)u_0(x)\|_{L^1_x}
		\end{align}
		for all $t>0$ and $u_0=(u_{0,1},u_{0,2},u_{0,3})\in L^1(\mathbb{R}^3)$ satisfying $\nabla \cdot u_0=0$ and $|\xh|u_0\in L^1(\mathbb{R}^3)$.
		\item [(2)]
		There exists an absolute positive constant $C$ such that
		for $1\leqslant p\leqslant \infty$, $m=0,1$
		and any function $u_0=(u_{0,1},u_{0,2},u_{0,3})$
		satisfying $\nabla \cdot u_0=0$
		and $|\xh|^mu_0(x)\in L^1(\mathbb{R}^2_{\xh};(L^1\cap L^{\infty})(\mathbb{R}_{x_3}))$,
		there exists
		a nonnegative function $\mathcal{R}_{p,m}(t)$ such that
		$\mathcal{R}_{p,m}(t)\to 0$ as $t\to \infty$
		and
		\begin{align}\label{enhanced_expansion}
			\begin{split}
				&\left\|
				e^{t \Deltah}u_{0,3}(x)-\sum_{|\alphah|\leqslant m}\nablah^{\alphah}\Gh(t,\xh)\int_{\mathbb{R}^2}(-\yh)^{\alphah}u_{0,3}(\yh,x_3) d\yh
				\right\|_{L^p}\\
				&\qquad
				\leqslant
				Ct^{-\frac{3}{2}(1- \frac{1}{p})-\frac{m}{2}}\mathcal{R}_{p,m}(t)^{\frac{1}{p}}\||\xh|^m u_0(x)\|_{L^1}^{1- \frac{1}{p}}
			\end{split}
		\end{align}
		for all $t>0$.
	\end{itemize}
\end{lemm}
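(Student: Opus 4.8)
The plan is to exploit the divergence-free condition, which lets us write the vertical component as a horizontal divergence and thereby gain one free horizontal derivative; this extra derivative is exactly the source of the enhanced dissipation. Concretely, since $\partial_3u_{0,3}=-\nablah\cdot u_{0,\h}$ and $u_{0,3}(\xh,x_3)\to0$ as $x_3\to-\infty$, integrating in $x_3$ gives $u_{0,3}=-\nablah\cdot G$ with $G(\xh,x_3):=\int_{-\infty}^{x_3}u_{0,\h}(\xh,y_3)\,dy_3$, so that
\[
e^{t\Deltah}u_{0,3}=-\nablah\cdot e^{t\Deltah}G,\qquad \|G\|_{\Lh^1\Lv^\infty}\leqslant\|u_0\|_{L^1},\qquad \||\xh|G\|_{\Lh^1\Lv^\infty}\leqslant\||\xh|u_0\|_{L^1}.
\]
The representation is justified rigorously on the Fourier side, where the constraint forces $\widehat{u_{0,3}}(\xih,\xi_3)=O(|\xih|)$ near $\xih=0$; equivalently, one first proves the estimates for Schwartz divergence-free fields and passes to the limit.

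For \eqref{enhaced decay} I would prove the two endpoints in $q$ and interpolate in the vertical variable. At $q=1$ no structure is needed: the first inequality of Lemma~\ref{lemm:Linear_est_1}(1) gives $\|e^{t\Deltah}u_{0,3}\|_{\Lh^p\Lv^1}\leqslant Ct^{-(1-1/p)}\|u_0\|_{L^1}$. At $q=\infty$ the free derivative pays off: writing $e^{t\Deltah}u_{0,3}=-\nablah\cdot e^{t\Deltah}G$ and applying Lemma~\ref{lemm:Linear_est_1}(1) with one horizontal derivative yields $\|e^{t\Deltah}u_{0,3}\|_{\Lh^p\Lv^\infty}\leqslant Ct^{-(1-1/p)-1/2}\|u_0\|_{L^1}$. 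Complex interpolation of the mixed-norm spaces $\Lh^p\Lv^q$ (with $p$ fixed and $1/q=\theta$, the source norm $\|u_0\|_{L^1}$ being independent of $q$) produces the exponent $(1-1/p)+\tfrac12(1-1/q)$, which is exactly \eqref{enhaced decay}. For the weighted bound \eqref{W-enhaced decay} I would again use $e^{t\Deltah}u_{0,3}=-\nablah\cdot e^{t\Deltah}G$ together with the moment estimate (the third inequality of Lemma~\ref{lemm:Linear_est_1}(1)) at $p=1$, $q=\infty$ and one horizontal derivative, bounding the left-hand side by $C\|G\|_{\Lh^1\Lv^\infty}+Ct^{-1/2}\||\xh|G\|_{\Lh^1\Lv^\infty}$; this is \eqref{W-enhaced decay} for $t\geqslant1$, while for $0<t<1$ one estimates directly, using that $e^{t\Deltah}$ is close to the identity, so that $\||\xh|e^{t\Deltah}u_{0,3}\|_{\Lh^1\Lv^\infty}$ remains bounded as $t\to0$.

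For the expansion \eqref{enhanced_expansion} I would reduce to the extreme values of $p$ and interpolate by the elementary inequality $\|g\|_{L^p}\leqslant\|g\|_{L^1}^{1/p}\|g\|_{L^\infty}^{1-1/p}$, applied to $g:=e^{t\Deltah}u_{0,3}-\sum_{|\alphah|\leqslant m}\nablah^{\alphah}\Gh(t,\xh)\int_{\mathbb{R}^2}(-\yh)^{\alphah}u_{0,3}(\yh,x_3)\,d\yh$. At $p=1$ the claim is $\|g\|_{L^1}=o(t^{-m/2})$, which is precisely Lemma~\ref{lemm:Linear_est_1}(2) with $q=1$; write it as $t^{-m/2}\mathcal{R}_{p,m}(t)$ with $\mathcal{R}_{p,m}(t)\to0$. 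At $p=\infty$ the claim is the enhanced expansion $\|g\|_{L^\infty}\leqslant Ct^{-3/2-m/2}\||\xh|^mu_0\|_{L^1}$, obtained from $u_{0,3}=-\nablah\cdot G$: the free horizontal derivative upgrades the order-$m$ Taylor remainder of $e^{t\Deltah}G$ (controlled, via Lemma~\ref{lemm:Linear_est_1}(2)--(3), by the $m$-th moment of $G$, hence of $u_0$) by an extra factor $t^{-1/2}$, turning the generic rate $t^{-1-m/2}$ into $t^{-3/2-m/2}$. Feeding both bounds into the interpolation inequality reproduces the exponent and prefactor of \eqref{enhanced_expansion}, since $-\tfrac{m}{2p}-(\tfrac32+\tfrac m2)(1-\tfrac1p)=-\tfrac32(1-\tfrac1p)-\tfrac m2$ and the factors combine as $\mathcal{R}_{p,m}(t)^{1/p}\||\xh|^mu_0\|_{L^1}^{1-1/p}$.

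The main obstacle is bookkeeping rather than a single hard estimate. Two points deserve care: first, the rigorous derivation of $u_{0,3}=-\nablah\cdot G$ with a genuinely integrable, decaying primitive $G$ (which is why I would argue in Fourier variables or approximate by Schwartz divergence-free fields, and why the small-$t$ regime of \eqref{W-enhaced decay} is treated separately); and second, matching the moment count in the enhanced $L^\infty$ expansion. Since differentiating the order-$m$ expansion of $G$ generates Gaussian-derivative terms of order up to $m+1$, one must verify that, after inserting $u_{0,3}=-\nablah\cdot G$ and integrating by parts in the moment integrals $\int(-\yh)^{\alphah}u_{0,3}\,d\yh=\sum_i\int\partial_{y_i}[(-\yh)^{\alphah}]G_i\,d\yh$, the explicit terms cancel precisely and the free derivative is spent so that only the $m$-th moment of $u_0$ is required, matching the hypothesis. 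A reassuring consistency check is that the zeroth moment $\int_{\mathbb{R}^2}u_{0,3}(\yh,x_3)\,d\yh$ vanishes identically (it is $x_3$-independent by the divergence-free relation and decays at infinity), which is why the leading Gaussian coefficient of the vertical component is zero and $u_3$ decays at the faster three-dimensional rate.
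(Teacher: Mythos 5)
Your proposal is correct in substance and is essentially the paper's argument: both proofs run on the same two ideas, namely trading the divergence-free constraint for one free horizontal derivative (the source of the extra $t^{-1/2}$) and then interpolating between endpoint norms. The implementations differ only mildly. Where you antidifferentiate, writing $u_{0,3}=-\nablah\cdot G$ with $G(\xh,x_3)=\int_{-\infty}^{x_3}u_{0,\h}(\xh,y_3)\,dy_3$, the paper differentiates: it uses the one-dimensional embedding $\|f\|_{\Lv^{\infty}}\leqslant\|\partial_3 f\|_{\Lv^{1}}$ together with $\partial_3u_{0,3}=-\nablah\cdot u_{0,\h}$. The two manoeuvres are dual and give identical endpoint bounds, but the paper's version avoids your justification burden for the primitive $G$ (no Fourier-side or density argument is needed). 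For \eqref{enhaced decay} the paper uses plain pointwise H\"older in the vertical variable, $\|f\|_{\Lh^p\Lv^q}\leqslant\|f\|_{\Lh^p\Lv^1}^{1/q}\|f\|_{\Lh^p\Lv^{\infty}}^{1-1/q}$, which suffices; your complex interpolation of the mixed-norm scale is not wrong for the fixed linear map on divergence-free data, just heavier than necessary. For \eqref{enhanced_expansion} you interpolate the full $L^p$ norm between $p=1$ and an enhanced $p=\infty$ endpoint, whereas the paper interpolates only vertically at fixed horizontal exponent, via $\|F_m\|_{L^p}\leqslant\|F_m\|_{\Lh^p\Lv^1}^{1/p}\|\partial_3F_m\|_{\Lh^p\Lv^1}^{1-1/p}$ (see \eqref{Fm_Lp}), using Lemma \ref{lemm:Linear_est_1}(2) in $\Lh^p\Lv^1$ at the same $p$ and the moment identities \eqref{zero1}--\eqref{zero3}; this yields a $p$-dependent $\mathcal{R}_{p,m}$ but the same exponents. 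Your route closes as well: the cancellation you flagged does work out exactly, since $\int_{\mathbb{R}^2}u_{0,3}\,d\yh=0$ and $\int_{\mathbb{R}^2}(-y_k)u_{0,3}\,d\yh=-\int_{\mathbb{R}^2}G_k\,d\yh$ give $F_m=-\nablah\cdot\bigl[e^{t\Deltah}G-(\text{order-}m\ \text{expansion of}\ G)\bigr]$, so only the $m$-th moment of $G$, hence of $u_0$, is needed, and your exponent arithmetic is right.

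The one soft spot is the regime $0<t<1$ in \eqref{W-enhaced decay}. Your computation honestly yields $C\|u_0\|_{L^1}+Ct^{-1/2}\||\xh|u_0\|_{L^1}$, i.e.\ the stated bound only for $t\geqslant1$, and your proposed patch --- that $e^{t\Deltah}$ is close to the identity so the left-hand side stays bounded --- does not follow from $u_0,|\xh|u_0\in L^1$ alone: control of the $\Lv^{\infty}$ norm costs either a vertical derivative (not assumed) or a horizontal one (costing $t^{-1/2}$), and as $t\to0$ the quantity $\||\xh|u_{0,3}(x)\|_{\Lh^1\Lv^{\infty}}$ is not dominated by $\|(1+|\xh|)u_0\|_{L^1}$ (oscillatory data of horizontal frequency $\delta^{-1}$, concentrated on a vertical scale $\delta^2$, make the ratio blow up like $\delta^{-1}$ at $t\sim\delta^2$). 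You should know, however, that the paper's own one-line proof has exactly the same issue: its application of Lemma \ref{lemm:Linear_est_1}(1) with one horizontal derivative produces the same $t^{-1/2}\||\xh|u_0\|_{L^1}$ term, which is silently absorbed into the constant. The flaw is harmless where the estimate is actually used --- in the $Y$-norm bound for $e^{t\Deltah}u_0$ in Proposition \ref{prop:weight}, the data additionally satisfy $|\xh|u_0\in\Lh^1\Lv^{\infty}$, and then the small-time bound is immediate from $\||\xh|e^{t\Deltah}u_{0,3}\|_{\Lh^1\Lv^{\infty}}\leqslant Ct^{1/2}\|u_{0,3}\|_{\Lh^1\Lv^{\infty}}+C\||\xh|u_{0,3}\|_{\Lh^1\Lv^{\infty}}$ --- but neither your patch nor the paper's line literally proves the small-$t$ claim under the hypotheses as stated, so you were right to isolate it, and the honest fix is to restrict to $t\geqslant1$ or to strengthen the hypothesis as in the application.
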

\begin{proof}
	We first prove (\ref{enhaced decay}).
	Using $\partial_3u_{0,3}=-\nablah\cdot u_{0,\h}$, we obtain that
	\begin{align*}
		\|e^{t\Deltah}u_{0,3}\|_{\Lh^p\Lv^q}
		&\leqslant
		\|e^{t\Deltah}u_{0,3}\|_{\Lh^p\Lv^1}^{\frac{1}{q}}
		\| e^{t\Deltah}u_{0,3}\|_{\Lh^p\Lv^{\infty}}^{1-\frac{1}{q}}\\
		&\leqslant
		C
		\left(
		t^{-(1- \frac{1}{p})}
		\| u_{0,3}\|_{L^1}
		\right)^{\frac{1}{q}}
		\|e^{t\Deltah}\partial_3 u_{0,3}\|_{\Lh^p\Lv^1}^{1-\frac{1}{q}}\\
		&=
		C
		t^{-\frac{1}{q}(1- \frac{1}{p})}
		\|u_{0,3}\|_{L^1}^{\frac{1}{q}}
		\|e^{t\Deltah}\nablah \cdot u_{0,\h}\|_{\Lh^p\Lv^1}^{1-\frac{1}{q}}\\
		&\leqslant
		C
		t^{-\frac{1}{q}(1- \frac{1}{p})}
		\|u_{0,3}\|_{L^1}^{\frac{1}{q}}
		\left(
		t^{-(1-\frac{1}{p})-\frac{1}{2}}
		\|u_{0,\h}\|_{L^1}
		\right)^{1-\frac{1}{q}}\\
		&\leqslant
		Ct^{-(1-\frac{1}{p})- \frac{1}{2}(1- \frac{1}{q})}\|u_0\|_{L^1}.
	\end{align*}
	For the proof of (\ref{W-enhaced decay}), we see by Lemma \ref{lemm:Linear_est_1} that
	\begin{align*}
		\||\xh|e^{t\Deltah}u_{0,3}(x)\|_{\Lh^1\Lv^{\infty}}
		&\leqslant
		\||\xh|e^{t\Deltah}\partial_3u_{0,3}(x)\|_{L^1}\\
		&=
		\||\xh|e^{t\Deltah}\nablah \cdot u_{0,\h}(x)\|_{L^1}\\
		&\leqslant
		C\|(1+|\xh|)u_0(x)\|_{L^1}.
	\end{align*}

	Next, we show (\ref{enhanced_expansion}).
	Let
	\begin{equation*}
		F_m(t,x)
		:=
		e^{t \Deltah}u_{0,3}(x)-\sum_{|\alphah|\leqslant m}\nablah^{\alphah}\Gh(t,\xh)\int_{\mathbb{R}^2}(-\yh)^{\alphah}u_{0,3}(\yh,x_3)d\yh.
	\end{equation*}
	Then, we see that
	\begin{align}\label{Fm_Lp}
		\|F_m(t)\|_{L^p}
		\leqslant
		\| F_m(t)\|_{\Lh^p\Lv^1}^{\frac{1}{p}}
		\| F_m(t)\|_{\Lh^p\Lv^{\infty}}^{1-\frac{1}{p}}
		\leqslant
		\| F_m(t)\|_{\Lh^p\Lv^1}^{\frac{1}{p}}
		\| \partial_3 F_m(t)\|_{\Lh^p\Lv^1}^{1-\frac{1}{p}}.
	\end{align}
	Here, by Lemma \ref{lemm:Linear_est_1} (2) , we see that
	\begin{align}\label{Rm}
		\| F_m(t)\|_{\Lh^p\Lv^1}
		=t^{-(1-\frac{1}{p})- \frac{m}{2}}\mathcal{R}_{p,m}(t),\qquad
		\lim_{t\to \infty}\mathcal{R}_{p,m}(t)=0.
	\end{align}
	For the case (\ref{enhanced_expansion}) $m=0$, it follows from the divergence free condition on $u_0$ and integration by parts that
	\begin{align}
		&\partial_3\int_{\mathbb{R}^2}u_{0,3}(\yh,x_3)d\yh
		=
		-\int_{\mathbb{R}^2}\nablah\cdot u_{0,\h}(\yh,x_3)d\yh
		=0,\label{zero1}\\
		&\partial_3e^{t\Deltah}u_{0,3}(x)
		=
		-\int_{\mathbb{R}^2}\nablah\Gh(t,\xh-\yh)\cdot u_{0,\h}(\yh)d\yh.\label{zero2}
	\end{align}
	Thus, we have
	\begin{align*}
		 \partial_3F_0(t,x)
		=-\int_{\mathbb{R}^2}(\nablah\Gh)(t,\xh-\yh)\cdot u_{0,\h}(\yh,x_3)d\yh,
	\end{align*}
	which implies
	\begin{align}\label{del3F_0}
		\begin{split}
			\| \partial_3F_0(t,x)\|_{\Lh^p\Lv^1}
			&\leqslant
			\|\nablah\Gh(t)\|_{L^p(\mathbb{R}^2)}\| u_{0,\h}\|_{L^1}\\
			&\leqslant
			Ct^{-(1-\frac{1}{p})-\frac{1}{2}}\|u_0\|_{L^1}.
		\end{split}
	\end{align}
	Hence, we obtain by (\ref{Fm_Lp}), (\ref{Rm}) and (\ref{del3F_0}) that
	\begin{equation*}
		\|F_0(t)\|_{L^p}
		\leqslant
		Ct^{-\frac{3}{2}(1- \frac{1}{p})}\mathcal{R}_{p,0}(t)^{\frac{1}{p}}\|u_0\|_{L^1}^{1- \frac{1}{p}}.
	\end{equation*}
	This gives (\ref{enhanced_expansion}) with $m=0$.
	For the case $m=1$, it holds by the integration by parts and the divergence free condition that
	\begin{align}\label{zero3}
		\int_{\mathbb{R}^2}y_k\partial_3u_{0,3}(\yh,x_3) d\yh
		=-\int_{\mathbb{R}^2}y_k\nablah \cdot u_{0,\h}(\yh,x_3) d\yh
		=\int_{\mathbb{R}^2}u_{0,k}(\yh,x_3)d\yh.
	\end{align}
	Using (\ref{zero1}), (\ref{zero2}) and (\ref{zero3}), we have
	\begin{align*}
		 \partial_3F_1(t,x)
		&=
		-\int_{\mathbb{R}^2}\{(\nablah\Gh)(t,\xh-\yh)-(\nablah \Gh)(t,\xh)\}\cdot u_{0,\h}(\yh,x_3)d\yh\\
		&=
		\int_{\mathbb{R}^2}\int_0^1(\nablah^2\Gh)(t,\xh-\theta\yh)\yh\cdot u_{0.\h}(\yh,x_3)d\theta d\yh.
	\end{align*}
	Taking $\Lh^p\Lv^1$-norm, we see that
	\begin{align*}
		\| \partial_3 F_1(t)\|_{\Lh^p\Lv^1}
		\leqslant
		Ct^{-(1-\frac{1}{p})-1}\||\xh|u_0(x)\|_{L^1}.
	\end{align*}
	Hence, we obtain
	\begin{align*}
		\|F_1(t)\|_{L^p}
		\leqslant
		Ct^{-\frac{3}{2}(1- \frac{1}{p})-\frac{1}{2}}\mathcal{R}_{p,1}(t)^{\frac{1}{p}}\||\xh|u_0(x)\|_{L^1}^{1- \frac{1}{p}},
	\end{align*}
	which completes the proof.
\end{proof}

Next, we recall the function
\begin{align*}
	K(t,x)
	&=
	\int_0^{\infty}\frac{e^{-\frac{|\xh|^2}{4(t+s)}}}{4\pi(t+s)}\frac{e^{-\frac{x_3^2}{4s}}}{(4\pi s)^{\frac{1}{2}}}ds\\
	&=
	\int_0^{\infty}\Gh(t+s,\xh)\Gv(s,x_3)ds,
\end{align*}
which is defined in Section \ref{s2} (see Lemma \ref{lemma:K}).
In the following lemma, we state the decay rate for the derivative of $K(t)$.
\begin{lemm}\label{lemm:Linear_est_3}
	Let $1\leqslant p,q\leqslant \infty$, $(\beta,\gamma)\in (\mathbb{N}\cup\{0\})^2\times (\mathbb{N}\cup\{0\})$ and $m\in \mathbb{N}\cup\{0\}$
	satisfy
	\begin{align*}
		|\beta|+\gamma > \frac{2}{p}+\frac{1}{q}-1+m.
	\end{align*}
	Then,
	there exists a positive constant
	$C=C(p,q,\beta,\gamma, m)$ such that
	\begin{align*}
		&\left\||\xh|^m\nablah^{\beta}(-\Deltah)^{\frac{\gamma}{2}}K(t,x)\right\|_{\Lh^p\Lv^{q}}
		\leqslant
		Ct^{-(1- \frac{1}{p})-\frac{1}{2}(1- \frac{1}{q})-\frac{|\beta|+\gamma-2}{2}+\frac{m}{2}}
	\end{align*}
	for all $t>0$.

	Moreover, if
	$1\leqslant p_1,p_2\leqslant p$ and $1\leqslant q_1,q_2\leqslant q$ satisfy
	\begin{align*}
		\left(\frac{1}{p_1}-\frac{1}{p}\right)+\frac{1}{2}\left(\frac{1}{q_1} - \frac{1}{q}\right)+\frac{|\beta|+\gamma-3}{2}>0,\qquad
		\left(\frac{1}{p_2}-\frac{1}{p}\right)+\frac{1}{2}\left(\frac{1}{q_2} - \frac{1}{q}\right)+\frac{|\beta|+\gamma-2}{2}>0,
	\end{align*}
	then
	there exists a positive constant
	$C=C(p,p_1,p_2,q,q_1,q_2,\beta,\gamma)$ such that
	\begin{align*}
		\||\xh|\nablah^{\beta}(-\Deltah)^{\frac{\gamma}{2}}K(t)*f(x)\|_{\Lh^p\Lv^q}
		&\leqslant
		Ct^{-(\frac{1}{p_1}-\frac{1}{p})-\frac{1}{2}(\frac{1}{q_1} - \frac{1}{q})-\frac{|\beta|+\gamma-3}{2}}\|f\|_{\Lh^{p_1}\Lv^{q_1}}\\
		&\quad+
		Ct^{-(\frac{1}{p_2}-\frac{1}{p})-\frac{1}{2}(\frac{1}{q_2} - \frac{1}{q})-\frac{|\beta|+\gamma-2}{2}}\||\xh|f(x)\|_{\Lh^{p_2}\Lv^{q_2}}
	\end{align*}
	for all $t>0$
	and all functions $f$ satisfying
	$\partial_3^{\alpha_3}f\in \Lh^{p_1}\Lv^{q_1}(\mathbb{R}^3)$
	and
	$|\xh|\partial_3^{\alpha_3}f(x)\in \Lh^{p_2}\Lv^{q_2}(\mathbb{R}^3)$.
\end{lemm}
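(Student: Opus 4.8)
The plan is to exploit the representation $K(t,x)=\int_0^\infty \Gh(t+s,\xh)\Gv(s,x_3)\,ds$ from Lemma \ref{lemma:K}, which separates the horizontal and vertical variables inside the $s$-integral. Since $\nablah^\beta(-\Deltah)^{\gamma/2}$ and the weight $|\xh|^m$ act only on $\xh$, I would first commute these operators past the $s$-integral (justified by the rapid decay of the Gaussians) to get
\[
\nablah^\beta(-\Deltah)^{\frac{\gamma}{2}}K(t,x)=\int_0^\infty \big(\nablah^\beta(-\Deltah)^{\frac{\gamma}{2}}\Gh\big)(t+s,\xh)\,\Gv(s,x_3)\,ds.
\]
Applying Minkowski's inequality in $\Lh^p\Lv^q$ and using that the integrand factors as a function of $\xh$ times a function of $x_3$ (so that its $\Lh^p\Lv^q$ norm equals the product of the two one-variable norms), I reduce the whole bound to $\int_0^\infty \||\xh|^m\nablah^\beta(-\Deltah)^{\frac{\gamma}{2}}\Gh(t+s,\cdot)\|_{L^p(\mathbb{R}^2)}\,\|\Gv(s,\cdot)\|_{L^q(\mathbb{R})}\,ds$.

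The first key ingredient is the fractional analogue of Lemma \ref{lemm:Linear_est_1}(1). By the scaling $(-\Deltah)^{\frac{\gamma}{2}}\Gh(\sigma,\xh)=\sigma^{-1-\frac{\gamma}{2}}\big((-\Deltah)^{\frac{\gamma}{2}}\Gh\big)(1,\xh/\sqrt\sigma)$, a change of variables gives $\||\xh|^m\nablah^\beta(-\Deltah)^{\frac{\gamma}{2}}\Gh(\sigma,\cdot)\|_{L^p(\mathbb{R}^2)}\le C\sigma^{-(1-\frac1p)-\frac{|\beta|+\gamma}{2}+\frac{m}{2}}$, the constant being the $L^p$ norm of the fixed profile $|\xh|^m\nablah^\beta(-\Deltah)^{\frac{\gamma}{2}}\Gh(1,\cdot)$. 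I would verify finiteness of this profile norm: its horizontal Fourier transform $(i\xih)^\beta|\xih|^\gamma e^{-|\xih|^2}$ is integrable together with its $\xih$-derivatives, so the profile is a bounded function decaying like $|\xh|^{-2-|\beta|-\gamma}$ at infinity; thus $|\xh|^m$ times it lies in $L^p(\mathbb{R}^2)$ whenever $(2+|\beta|+\gamma-m)p>2$, which is weaker than the stated hypothesis. Combined with $\|\Gv(s,\cdot)\|_{L^q(\mathbb{R})}=Cs^{-\frac12(1-\frac1q)}$, the whole norm is dominated by $C\int_0^\infty (t+s)^{-(1-\frac1p)-\frac{|\beta|+\gamma}{2}+\frac m2}s^{-\frac12(1-\frac1q)}\,ds$.

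The remaining step is the $s$-integral. With the substitution $s=t\sigma$,
\[
\int_0^\infty(t+s)^{-a}s^{-b}\,ds=t^{1-a-b}B(1-b,\,a+b-1),\qquad a=(1-\tfrac1p)+\tfrac{|\beta|+\gamma}{2}-\tfrac m2,\quad b=\tfrac12(1-\tfrac1q),
\]
and the Beta integral converges precisely when $b<1$ (automatic, since $b\le\frac12$) and $a+b>1$. A direct computation shows $a+b>1$ is equivalent to the hypothesis $|\beta|+\gamma>\frac2p+\frac1q-1+m$, while the exponent $1-a-b$ equals exactly $-(1-\frac1p)-\frac12(1-\frac1q)-\frac{|\beta|+\gamma-2}{2}+\frac m2$, which is the claimed rate. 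This finishes the first estimate.

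For the second (convolution) estimate I would reduce to the first by distributing the horizontal weight across the convolution: since the horizontal part of $x$ equals that of $x-y$ plus $\yh$, the term $\xh\,(K(t)*f)(x)$ splits into one piece carrying the weight on the kernel and one carrying it on $f$. Hence, by Young's inequality in the anisotropic Lebesgue spaces, $\||\xh|\nablah^\beta(-\Deltah)^{\frac{\gamma}{2}}K(t)*f\|_{\Lh^p\Lv^q}$ is controlled by $\||\xh|\nablah^\beta(-\Deltah)^{\frac{\gamma}{2}}K(t,x)\|_{\Lh^a\Lv^b}\|f\|_{\Lh^{p_1}\Lv^{q_1}}+\|\nablah^\beta(-\Deltah)^{\frac{\gamma}{2}}K(t)\|_{\Lh^{a'}\Lv^{b'}}\||\xh|f(x)\|_{\Lh^{p_2}\Lv^{q_2}}$, with $\frac1a=\frac1p+1-\frac1{p_1}$, $\frac1b=\frac1q+1-\frac1{q_1}$ and analogous relations for $a',b'$ with $p_2,q_2$; the hypotheses $p_1,p_2\le p$ and $q_1,q_2\le q$ guarantee $a,b,a',b'\ge1$. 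Applying the first estimate with $m=1$ to the first term and $m=0$ to the second, the admissibility conditions $|\beta|+\gamma>\frac2a+\frac1b$ and $|\beta|+\gamma>\frac2{a'}+\frac1{b'}-1$ translate, after substituting the Young exponents, into precisely the two displayed inequalities on $(p_1,q_1)$ and $(p_2,q_2)$, and the resulting powers of $t$ match the asserted exponents. The main obstacle is the bookkeeping behind the fractional Gaussian estimate, namely verifying the finiteness and correct scaling of the profile norm with $(-\Deltah)^{\gamma/2}$ and the weight $|\xh|^m$ present simultaneously; once that scaling is in place, the $s$-integral and the splitting/Young arguments are routine.
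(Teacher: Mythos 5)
Your proposal is correct and follows essentially the same route as the paper: the factorized representation $K(t,x)=\int_0^\infty \Gh(t+s,\xh)\Gv(s,x_3)\,ds$, Minkowski's inequality reducing to a product of one-variable Gaussian norms, the convergent $s$-integral (whose convergence condition is exactly the stated hypothesis), and, for the convolution estimate, splitting the weight via $|\xh|\leqslant|\xh-\yh|+|\yh|$ and applying Young's inequality in $\Lh^p\Lv^q$ together with the first estimate at $m=1$ and $m=0$. Your only addition is an explicit verification (scaling plus decay of the profile $\nablah^{\beta}(-\Deltah)^{\frac{\gamma}{2}}\Gh(1,\cdot)$) of the fractional Gaussian bound that the paper uses without comment as an extension of Lemma \ref{lemm:Linear_est_1}~(1); this is a welcome but inessential amplification of the same argument.
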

\begin{rem}\label{rem:Linear_est_3}
	Lemma \ref{lemm:Linear_est_3} implies that
	the operator $\nablah^{\beta}(-\Deltah)^{\frac{\gamma}{2}}K(t)*$ decays as the $3$D heat kernel with the horizontal gradient of $|\beta|+\gamma-2$-th order.
\end{rem}
\begin{proof}[Proof of Lemma \ref{lemm:Linear_est_3}]
	From Lemma \ref{lemma:K}, it follows that
	\begin{equation*}\label{beta-gamma-K}
		|\xh|^m\nablah^{\beta}(-\Deltah)^{\frac{\gamma}{2}}K(t,x)
		=\int_0^{\infty}\Gv(s,x_3)|\xh|^m\nablah^{\beta}(-\Deltah)^{\frac{\gamma}{2}}\Gh(t+s,\xh)ds.
	\end{equation*}
	Therefore, we have
	\begin{align*}
		\left\|
		|\xh|^m\nablah^{\beta}(-\Deltah)^{\frac{\gamma}{2}}K(t,x)
		\right\|_{\Lh^p\Lv^{q}}
		&\leqslant
		\int_0^{\infty}
		\|\Gv(s)\|_{L^{q}(\mathbb{R})}
		\left\||\xh|^m\nablah^{\beta}(-\Deltah)^{\frac{\gamma}{2}}\Gh(t+s)\right\|_{L^p(\mathbb{R}^2)}
		ds\\
		&\leqslant
		C\int_0^{\infty}
		s^{-\frac{1}{2}(1- \frac{1}{q})}
		(t+s)^{-(1- \frac{1}{p})-\frac{|\beta|+\gamma}{2}+\frac{m}{2}}
		ds\\
		&=
		Ct^{-(1- \frac{1}{p})-\frac{1}{2}(1- \frac{1}{q})-\frac{|\beta|+\gamma-2}{2}+\frac{m}{2}},
	\end{align*}
	which proves the first estimate.
	For the second estimate,
	we see that
	\begin{align*}
		|\xh|\left|\nablah^{\beta}(-\Deltah)^{\frac{\gamma}{2}}K(t)*f(x)\right|
		&\leqslant
		\int_{\mathbb{R}^3}|\xh-\yh|\left|\nablah^{\beta}(-\Deltah)^{\frac{\gamma}{2}}K(t,x-y)\right||f(y)|dy\\
		&\quad+
		\int_{\mathbb{R}^3}\left|\nablah^{\beta}(-\Deltah)^{\frac{\gamma}{2}}K(t,x-y)\right||\yh||f(y)|dy.
	\end{align*}
	Hence, taking $\Lh^p\Lv^q$-norm and applying the Hausdorff-Young inequality, we complete the proof.
\end{proof}
\section{Nonlinear Analysis}\label{s4}
In this section, we establish decay estimates and asymptotic expansions for the Duhamel terms related to the vector fields $u=(u_1(t,x),u_2(t,x),u_3(t,x))$,
which satisfies some of the following assumptions for some $s\in \mathbb{N}$, $0<T\leqslant \infty$ and $A,B\geqslant 0$:
\begin{itemize}
	\item [{\bf (A1)}]
	$u\in C([0,\infty);X^s(\mathbb{R}^3))$,
	$\nabla\cdot u=0$ and
	\begin{align*}
		\|u(t)\|_{H^s}\leqslant A,\qquad
		\|u(t)\|_{L^1(\mathbb{R}^2_{\xh};(W^{1,1}\cap W^{1,\infty})(\mathbb{R}_{x_3}))}\leqslant A(1+t)
	\end{align*}
	for all $t> 0$.\\
	\item [{\bf (A2)}]
	For $1\leqslant p\leqslant \infty$, there hold
	\begin{align*}
		&\|\nabla^{\alpha}\uh(t)\|_{L^p}\leqslant CAt^{-(1-\frac{1}{p})- \frac{|\alphah|}{2}}, \\
		&\|\nablah^{\alphah}u_3(t)\|_{L^p}\leqslant CAt^{-\frac{3}{2}(1-\frac{1}{p})- \frac{|\alphah|}{2}}
	\end{align*}
	for all $0<t<T$ and $\alpha\in (\mathbb{N}\cup\{0\})^3$ with $|\alpha|\leqslant 1$.\\
	\item [{\bf (A3)}]
	$\|\nabla^{\alpha}u(t)\|_{\Lh^{\infty}\Lv^1}\leqslant At^{-1- \frac{|\alphah|}{2}}$
	for all $0<t<T$ and $\alpha\in (\mathbb{N}\cup\{0\})^3$ with $|\alpha|\leqslant 1$.\\
	\item [{\bf (A4)}]
	There hold
	\begin{align*}
		\||\xh|\uh(t,x)\|_{\Lh^1\Lv^{\infty}}\leqslant B(1+t)^{\frac{1}{2}},\qquad
		\||\xh|u_3(t,x)\|_{\Lh^1\Lv^{\infty}}\leqslant B
	\end{align*}
	for all $t> 0$.
\end{itemize}
\begin{rem}\label{rem:assumption}
	It is easy to check that if $u$ satisfies {\bf (A1)} and {\bf (A2)} for some $s\in \mathbb{N}$ with $s\geqslant 3$, $0<T\leqslant \infty$ and $A\geqslant 0$,
	then there exists an absolute positive constant $C$ such that
	\begin{align*}
		&\|\uh(t)\|_{L^p},\|\partial_3\uh(t)\|_{L^p}
		\leqslant CA(1+t)^{-(1-\frac{1}{p})},\\
		&\|\nablah \uh(t)\|_{L^p}
		\leqslant
		\begin{cases}
			CA (1+t)^{-(1-\frac{1}{p})-\frac{1}{2}} & (2\leqslant p\leqslant \infty)\\
			CA t^{-(1-\frac{1}{p})-\frac{1}{2}} & (1\leqslant p< 2)
		\end{cases},\\
		&\|u_3(t)\|_{L^p}
		\leqslant CA(1+t)^{-\frac{3}{2}(1-\frac{1}{p})},\\
		&\|\partial_3u_3(t)\|_{L^p}
		\leqslant CA(1+t)^{-(1-\frac{1}{p})-\frac{1}{2}},\\
		&\|\nablah u_3(t)\|_{L^p}
		\leqslant
		\begin{cases}
			CA (1+t)^{-\frac{3}{2}(1-\frac{1}{p})-\frac{1}{2}} & (2\leqslant p\leqslant \infty)\\
			CA t^{-\frac{3}{2}(1-\frac{1}{p})-\frac{1}{2}} & (1\leqslant p< 2)
		\end{cases},\\
		&\|\uh(t)\|_{\Lh^1\Lv^{\infty}}
		\leqslant CA,\\
		&\|u_3(t)\|_{\Lh^1\Lv^{\infty}}
		\leqslant
		CA(1+\tau)^{-\frac{1}{2}}
	\end{align*}
	for all $1\leqslant p\leqslant \infty$ and $0<t<T$.
	In the following of this paper, we often use this fact.
\end{rem}
\subsection*{Decay Estimates for the Duhamel Terms}
We first forcus on the decay rates for the Duhamel terms.
\begin{lemm}\label{lemm:Duha_est_1}
	Let $u$ satisfy {\bf (A1)} and {\bf (A2)} for some $s\in \mathbb{N}$ with $s \geqslant 5$, $0<T\leqslant \infty$ and $A\geqslant 0$.
	Then, there exists an absolute positive constant $C$ such that
	\begin{align*}
		&
		\|\nabla^{\alpha}\Nhh_1[u](t)\|_{L^p}\leqslant CA^2(1+t)^{-(1- \frac{1}{p})-\frac{|\alphah|}{2}},\\
		&
		\|\nabla^{\alpha}\Nhh_2[u](t)\|_{L^p}\leqslant CA^2(1+t)^{-(1- \frac{1}{p})- \frac{1+|\alphah|}{2}}\log (2+t),\\
		&
		\|\nabla^{\alpha}\Nhh_3[u](t)\|_{L^p}\leqslant CA^2(1+t)^{-(1- \frac{1}{p})- \frac{1+|\alphah|}{2}},\\
		&
		\|\nabla^{\alpha}\Nhh_4[u](t)\|_{L^p}\leqslant CA^2(1+t)^{-\frac{9}{8}(1- \frac{1}{p})- \frac{1+|\alphah|}{2}}\log (2+t),\\
		&
		\|\nabla^{\alpha}\Nhh_5[u](t)\|_{L^p}\leqslant CA^2(1+t)^{-\frac{9}{8}(1- \frac{1}{p})- \frac{1+|\alphah|}{2}}
	\end{align*}
	and
	\begin{align*}
		&
		\|\nabla^{\alpha}\Nvv_1[u](t)\|_{L^p}\leqslant CA^2(1+t)^{-(1- \frac{1}{p})- \frac{1+|\alphah|}{2}}
													=CA^2(1+t)^{-\frac{3}{2}(1- \frac{1}{p})- \frac{1}{2p}-\frac{|\alphah|}{2}},\\
		&
		\|\nabla^{\alpha}\Nvv_2[u](t)\|_{L^p}\leqslant CA^2(1+t)^{-\frac{9}{8}(1- \frac{1}{p})- \frac{1+|\alphah|}{2}}\log(2+t),\\
		&
		\|\nabla^{\alpha}\Nvv_3[u](t)\|_{L^p}\leqslant CA^2(1+t)^{-\frac{9}{8}(1- \frac{1}{p})- \frac{1+|\alphah|}{2}}
	\end{align*}
	for all $1\leqslant p\leqslant \infty$, $0<t<T$  and $\alpha\in (\mathbb{N}\cup\{0\})^3$ with $|\alpha|\leqslant 1$.
\end{lemm}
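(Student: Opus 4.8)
The plan is to estimate each Duhamel term of Proposition~\ref{prop:integral_equation} by splitting the time integral at $\tau=t/2$ and feeding the product decay supplied by \textbf{(A2)} and Remark~\ref{rem:assumption} into the linear smoothing estimates of Section~\ref{s3}. First I would record, for each bilinear term appearing in the decomposition, the decay of the relevant product in anisotropic Lebesgue norms: by H\"older's inequality in $\Lh^{p}\Lv^{q}(\mathbb{R}^3)$, products such as $u_3\uh$, $\uh\otimes\uh$ and $u_3^2$ inherit a rate equal to the sum of the rates of the two factors, and since $u_3$ carries the faster $3$D rate, these products are integrable in time. For instance $\|(u_3\uh)(\tau)\|_{L^1}\leqslant\|u_3(\tau)\|_{L^2}\|\uh(\tau)\|_{L^2}\leqslant CA^2(1+\tau)^{-5/4}$, and $\int_0^{\infty}(1+\tau)^{-5/4}\,d\tau<\infty$; this convergence of the ``mass integral'' is what ultimately pins the decay rates.

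Next, for the semigroup terms $\Nhh_1,\Nhh_2,\Nhh_3,\Nvv_1$ I would apply Lemma~\ref{lemm:Linear_est_1}(1), placing $\nabla^{\alpha}$ together with the intrinsic derivative of the nonlinearity onto $e^{(t-\tau)\Deltah}$ (the vertical part $\partial_3^{\alpha_3}$ being moved onto the nonlinearity, since $e^{t\Deltah}$ does not smooth in $x_3$). A representative computation is
\begin{align*}
	\|\Nvv_1[u](t)\|_{L^p}
	&\leqslant
	\int_0^{t}\|e^{(t-\tau)\Deltah}\nablah\cdot(u_3\uh)(\tau)\|_{L^p}\,d\tau\\
	&\leqslant
	C\int_0^{t}(t-\tau)^{-(1-\frac{1}{p})-\frac{1}{2}}\|(u_3\uh)(\tau)\|_{L^1}\,d\tau.
\end{align*}
On $[0,t/2]$ one has $t-\tau\sim t$, so this part is bounded by $Ct^{-(1-\frac1p)-\frac12}\int_0^{t/2}(1+\tau)^{-5/4}\,d\tau\leqslant Ct^{-(1-\frac1p)-\frac12}$, which is exactly the asserted rate for $\alpha=0$; the tail over $[t/2,t]$, where the product is already of size $(1+t)^{-5/4}$, decays strictly faster. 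The only genuine endpoint issue is that for $p=\infty$ the smoothing exponent $-(1-\frac1p)-\frac12$ is not integrable at $\tau=t$, so on the tail I interpolate against $\Lh^{p_1}\Lv^{p_1}$ with $p_1>2$ chosen so that the power of $(t-\tau)$ stays strictly above $-1$.

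The $K$-kernel terms $\Nhh_4,\Nhh_5,\Nvv_2,\Nvv_3$ are treated in the same way using Lemma~\ref{lemm:Linear_est_3}, recalling from Remark~\ref{rem:Linear_est_3} that $\nablah^{\beta}(-\Deltah)^{\frac{\gamma}{2}}K(t)*$ behaves like the $3$D heat semigroup carrying $|\beta|+\gamma-2$ horizontal derivatives. Finally I would isolate the borderline terms $\Nhh_2$, $\Nhh_4$ and $\Nvv_2$, for which the two regimes meet at a critical exponent so that the $\tau$-integral over $[0,t/2]$ diverges logarithmically; this produces precisely the factors $\log(2+t)$ recorded in the statement.

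I expect the main obstacle to lie in the exponent bookkeeping for the $K$-kernel terms. There the $3$D-heat behaviour of $\nablah^{\beta}(-\Deltah)^{\frac{\gamma}{2}}K$ must be balanced against the anisotropic product decay while respecting the admissibility constraint $|\beta|+\gamma>\tfrac{2}{p}+\tfrac1q-1+m$ of Lemma~\ref{lemm:Linear_est_3}; this simultaneously forces the choice of interpolation indices, yields the mildly suboptimal rate $\tfrac98(1-\tfrac1p)$ that survives after combining the two regimes, and (at the critical exponents) generates the logarithmic corrections. The remaining work—verifying integrability of the $(t-\tau)$-singularity uniformly in $p$ and keeping the vertical derivatives on the nonlinearity—is routine once these exponents are fixed.
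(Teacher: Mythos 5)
Your overall strategy is the same as the paper's: split each Duhamel integral at $\tau=t/2$, prove the endpoint cases $p=1$ and $p=\infty$ via Young's inequality with the kernel bounds of Lemmas \ref{lemm:Linear_est_1} and \ref{lemm:Linear_est_3}, interpolate, and read off the logarithms from the critically non-integrable head integrals for $\Nhh_2$, $\Nhh_4$, $\Nvv_2$; your tail trick for $p=\infty$ (pairing the kernel in $L^{p_1'}$ with the product in $L^{p_1}$) is the same device the paper implements with the pair $L^{4/3}$--$L^{4}$ for the $K$-terms, which is what produces the exponent $\tfrac98(1-\tfrac1p)$. One technical slip: since $e^{t\Deltah}$ is a convolution in $\xh$ only, Young's inequality gives $\|e^{(t-\tau)\Deltah}\nablah\cdot(u_3\uh)\|_{L^p}\leqslant \|\nablah\Gh(t-\tau)\|_{L^p(\mathbb{R}^2)}\|(u_3\uh)(\tau)\|_{\Lh^1\Lv^p}$, not $\|(u_3\uh)(\tau)\|_{L^1}$ as in your display; you must carry the anisotropic norms $\Lh^1\Lv^p$ of the products throughout (for $p=\infty$ this is exactly where $\|\uh(\tau)\|_{\Lh^1\Lv^{\infty}}\leqslant CA$ from Remark \ref{rem:assumption} enters). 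This is fixable and does not change the rates.

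The genuine gap is the case $\alpha_3=1$, which you dismiss as routine. Neither $e^{(t-\tau)\Deltah}$ nor the kernels $K^{(m)}_{\beta,\gamma}$ smooth in $x_3$, so the vertical derivative must land on the nonlinearity; for $\Nhh_1$, which already carries an intrinsic $\partial_3$, this forces you to estimate $\partial_3^2(u_3\uh)$ — two vertical derivatives on the product — and \textbf{(A2)} controls only $|\alpha|\leqslant 1$ (indeed even $\partial_3 u_3$ is reached only through $\partial_3u_3=-\nablah\cdot\uh$). This is precisely where the hypotheses $s\geqslant 5$ and the uniform $H^s$ bound of \textbf{(A1)} are consumed: the paper closes this step by Gagliardo--Nirenberg interpolation against $H^s$, e.g. $\|\partial_3^2u_3\|_{L^2}\leqslant C\|\partial_3u_3\|_{L^2}^{\frac23}\|\partial_3^4u_3\|_{L^2}^{\frac13}$ and $\|\partial_3^2u_3\|_{\Lh^2\Lv^{\infty}}\leqslant C\|\partial_3u_3\|_{L^2}^{\frac58}\|\partial_3^5u_3\|_{L^2}^{\frac38}$, which yields the only barely integrable rates $\|\partial_3^2(u_3\uh)(\tau)\|_{L^1}\leqslant CA^2(1+\tau)^{-\frac{13}{12}}$ and $(1+\tau)^{-\frac98}$ in $\Lh^1\Lv^{\infty}$ needed for the head integrals to converge. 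Your proposal never invokes the $H^s$ control from \textbf{(A1)} and never explains why $s\geqslant5$ appears, so as written the $\alpha_3=1$ estimates for $\Nhh_1$ (and the analogous second-vertical-derivative bounds at $p=\infty$) cannot be closed; once this interpolation mechanism is supplied, the rest of your exponent bookkeeping goes through as you describe.
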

\begin{proof}
	It suffices to prove
	\begin{align}
			&\left\|
			\nabla^{\alpha}
			\int_0^t e^{(t-\tau)\Deltah}\partial_3(u_3\uh)(\tau)d\tau
			\right\|_{L^p}
			\leqslant
			CA^2(1+t)^{-(1- \frac{1}{p})-\frac{|\alphah|}{2}},\label{Duha_est_1-1}\\
			&\left\|
			\nabla^{\alpha}
			\int_0^t e^{(t-\tau)\Deltah}\nablah(u_ku_l)(\tau)d\tau
			\right\|_{L^p}
			\leqslant
			\begin{cases}
				CA^2(1+t)^{-(1- \frac{1}{p})-\frac{1+|\alphah|}{2}}\log(2+t) & (k=1,2)\\
				CA^2(1+t)^{-(1- \frac{1}{p})-\frac{1+|\alphah|}{2}} & (k=3)
			\end{cases},\label{Duha_est_2-1}\\
			&\left\|
			\nabla^{\alpha}
			\int_0^t K^{(m)}_{\beta,\gamma}(t-\tau)*(u_ku_l)(\tau)d\tau
			\right\|_{L^p}
			\nonumber\\
			&\qquad \qquad
			\leqslant
			\begin{cases}
				CA^2(1+t)^{-\frac{9}{8}(1- \frac{1}{p})-\frac{1+|\alphah|}{2}}\log(2+t) & (k=1,2)\\
				CA^2(1+t)^{-\frac{9}{8}(1- \frac{1}{p})-\frac{1+|\alphah|}{2}} & (k=3)
			\end{cases},\label{Duha_est_3-1}
	\end{align}
	for $l=1,2,3$, $m=1,2$ and $0<t<T$,
	where
	\begin{align}\label{K^m}
		K^{(1)}_{\beta,\gamma}(t,x):=\nablah^{\beta}(-\Deltah)^{\frac{\gamma}{2}}K(t,x),\qquad
		K^{(2)}_{\beta,\gamma}(t,x):={\rm sgn}(x_3)\nablah^{\beta}(-\Deltah)^{\frac{\gamma}{2}}K(t,x)
	\end{align}
	for $(\beta,\gamma)\in (\mathbb{N}\cup \{0\})^2\times (\mathbb{N}\cup \{0\})$ satisfying $|\beta|+\gamma=3$.
	The interpolation yields that it is enough to prove (\ref{Duha_est_1-1})-(\ref{Duha_est_3-1}) only for the case $p=1$ and $p=\infty$.

	First, we show (\ref{Duha_est_1-1}).
	For the case $p=1$,
	using the estimates
	\begin{align*}
		\|\partial_3(u_3\uh)(\tau)\|_{L^1}
		&\leqslant
			\|\partial_3u_3(\tau)\|_{L^{\infty}}
			\|\uh(\tau)\|_{L^1}
			+
			\|u_3(\tau)\|_{L^{\infty}}
			\|\partial_3\uh(\tau)\|_{L^1}\\
		&\leqslant
			CA^2(1+\tau)^{-\frac{3}{2}}
	\end{align*}
	and
	\begin{align}\label{del3^2(u_3uh)_L^1}
		\begin{split}
			\|\partial_3^2(u_3\uh)(\tau)\|_{L^1}
			&\leqslant
				\|\partial_3^2u_3(\tau)\|_{L^2}
				\|\uh(\tau)\|_{L^2}
				+
				2
				\|\partial_3u_3(\tau)\|_{L^2}
				\|\partial_3\uh(\tau)\|_{L^2}\\
				&\qquad \qquad \qquad
				+
				\|u_3(\tau)\|_{L^2}
				\|\partial_3^2\uh(\tau)\|_{L^2}\\\
			&\leqslant
			C
			\left(
				\|\partial_3u_3(\tau)\|_{L^2}^{\frac{2}{3}}
				\|\partial_3^4u_3(\tau)\|_{L^2}^{\frac{1}{3}}
				\|\uh(\tau)\|_{L^2}
				+
				\|\partial_3u_3(\tau)\|_{L^2}
				\|\partial_3\uh(\tau)\|_{L^2}\right.\\
				&\qquad \qquad \qquad
				\left.+
				\|u_3(\tau)\|_{L^2}
				\|\partial_3\uh(\tau)\|_{L^2}^{\frac{2}{3}}
				\|\partial_3^4\uh(\tau)\|_{L^2}^{\frac{1}{3}}
			\right)\\
			&\leqslant
				CA^2(1+\tau)^{-\frac{13}{12}},
		\end{split}
	\end{align}
	we see that
	\begin{align*}
		\left\|
		\nabla^{\alpha}
		\int_0^t e^{(t-\tau)\Deltah}\partial_3(u_3\uh)(\tau)d\tau
		\right\|_{L^1}
		&\leqslant
		C
		\int_0^t
		\|\nablah^{\alphah}\Gh(t-\tau)\|_{L^1(\mathbb{R}^2)}
		\|\partial_3^{\alpha_3+1}(u_3\uh)(\tau)\|_{L^1}d\tau\\
		&\leqslant
		CA^2
		\int_0^t
		(t-\tau)^{- \frac{|\alphah|}{2}}
		(1+\tau)^{-\frac{13}{12}}d\tau\\
		&\leqslant
		CA^2(1+t)^{- \frac{|\alphah|}{2}}.
	\end{align*}
	For the case $p=\infty$ with $\alpha_3=0$,
	it is easy to see that
	\begin{align*}
		\|\partial_3(u_3\uh)(\tau)\|_{\Lh^1\Lv^{\infty}}
		&\leqslant
			\|\partial_3u_3(\tau)\|_{L^{\infty}}
			\|\uh(\tau)\|_{\Lh^1\Lv^{\infty}}
			+
			\|u_3(\tau)\|_{\Lh^1\Lv^{\infty}}
			\|\partial_3\uh(\tau)\|_{L^{\infty}}\\
		&\leqslant
			CA^2(1+\tau)^{-\frac{3}{2}}
	\end{align*}
	and
	\begin{align*}
		\|\partial_3(u_3\uh)(\tau)\|_{L^{\infty}}
		&
		\leqslant
		\|\partial_3u_3(\tau)\|_{L^{\infty}}
		\|\uh(\tau)\|_{L^{\infty}}
		+
		\|u_3(\tau)\|_{L^{\infty}}
		\|\partial_3\uh(\tau)\|_{L^{\infty}}\\
		&
		\leqslant
		CA^2(1+\tau)^{-\frac{5}{2}},
	\end{align*}
	which yield
	\begin{align*}
		&\left\|
		\nablah^{\alphah}
		\int_0^t e^{(t-\tau)\Deltah}\partial_3(u_3\uh)(\tau)d\tau
		\right\|_{L^{\infty}}\\
		&\quad\leqslant
		C
		\int_0^{\frac{t}{2}}
		\|\nablah^{\alphah}\Gh(t-\tau)\|_{L^{\infty}(\mathbb{R}^2)}
		\|\partial_3(u_3\uh)(\tau)\|_{\Lh^1\Lv^{\infty}}d\tau
		+
		C
		\int_{\frac{t}{2}}^t
		\|\nablah^{\alphah}\Gh(t-\tau)\|_{L^1(\mathbb{R}^2)}
		\|\partial_3(u_3\uh)(\tau)\|_{L^{\infty}}d\tau\\
		&\quad\leqslant
		CA^2
		\int_0^{\frac{t}{2}}
		(t-\tau)^{-1- \frac{|\alphah|}{2}}
		(1+\tau)^{-\frac{3}{2}}d\tau
		+
		CA^2
		\int_{\frac{t}{2}}^t
		(t-\tau)^{- \frac{|\alphah|}{2}}
		(1+\tau)^{-\frac{5}{2}}d\tau\\
		&\quad\leqslant
		CA^2t^{-1-\frac{|\alphah|}{2}}
	\end{align*}
	for $t\geqslant 1$ and
	\begin{align*}
		\left\|
		\nablah^{\alphah}
		\int_0^t e^{(t-\tau)\Deltah}\partial_3(u_3\uh)(\tau)d\tau
		\right\|_{L^{\infty}}
		&\leqslant
		C
		\int_0^t
		\|\nablah^{\alphah}\Gh(t-\tau)\|_{L^1(\mathbb{R}^2)}
		\|\partial_3(u_3\uh)(\tau)\|_{L^{\infty}}d\tau\\
		&\leqslant
		CA^2
		\int_{0}^t
		(t-\tau)^{- \frac{|\alphah|}{2}}
		(1+\tau)^{- \frac{5}{2}}
		d\tau\\
		&\leqslant
		CA^2
	\end{align*}
	for $0< t \leqslant 1$.
	For the case $p=\infty$ with $\alpha_3=1$,
	we have by the Gagliardo-Nirenberg interpolation inequality that
	\begin{align*}
		\|\partial_3^2(u_3\uh)(\tau)\|_{\Lh^1\Lv^{\infty}}
		&\leqslant
			\|\partial_3^2u_3(\tau)\|_{\Lh^2\Lv^{\infty}}
			\|\uh(\tau)\|_{\Lh^2\Lv^{\infty}}
			+
			2
			\|\partial_3u_3(\tau)\|_{\Lh^2\Lv^{\infty}}
			\|\partial_3\uh(\tau)\|_{\Lh^2\Lv^{\infty}}\\
			&\qquad \qquad \qquad
			+
			\|u_3(\tau)\|_{\Lh^2\Lv^{\infty}}
			\|\partial_3^2\uh(\tau)\|_{\Lh^2\Lv^{\infty}}\\\
		&\leqslant
		C
		\left(
			\|\partial_3u_3(\tau)\|_{L^2}^{\frac{5}{8}}
			\|\partial_3^5u_3(\tau)\|_{L^2}^{\frac{3}{8}}
			\|\uh(\tau)\|_{L^2}^{\frac{1}{2}}
			\|\partial_3\uh(\tau)\|_{L^2}^{\frac{1}{2}}\right.\\
			&\qquad \qquad \qquad
			+
			\|\partial_3u_3(\tau)\|_{L^2}^{\frac{5}{6}}
			\|\partial_3^4u_3(\tau)\|_{L^2}^{\frac{1}{6}}
			\|\partial_3\uh(\tau)\|_{L^2}^{\frac{5}{6}}
			\|\partial_3^4\uh(\tau)\|_{L^2}^{\frac{1}{6}}\\
			&\qquad \qquad \qquad
			\left.+
			\|u_3(\tau)\|_{L^2}^{\frac{1}{2}}
			\|\partial_3u_3(\tau)\|_{L^2}^{\frac{1}{2}}
			\|\partial_3\uh(\tau)\|_{L^2}^{\frac{5}{8}}
			\|\partial_3^5\uh(\tau)\|_{L^2}^{\frac{3}{8}}
		\right)\\
		&\leqslant
			CA^2(1+\tau)^{-\frac{9}{8}}
	\end{align*}
	and also
	\begin{align*}
		\|\partial_3^2(u_3\uh)(\tau)\|_{\Lh^2\Lv^{\infty}}
		&\leqslant
			\|\partial_3^2u_3(\tau)\|_{\Lh^2\Lv^{\infty}}
			\|\uh(\tau)\|_{L^{\infty}}
			+
			2
			\|\partial_3u_3(\tau)\|_{L^{\infty}}
			\|\partial_3\uh(\tau)\|_{\Lh^2\Lv^{\infty}}\\
			&\qquad \qquad \qquad
			+
			\|u_3(\tau)\|_{L^{\infty}}
			\|\partial_3^2\uh(\tau)\|_{\Lh^2\Lv^{\infty}}\\
		&\leqslant
		C
		\left(
			\|\partial_3u_3(\tau)\|_{L^2}^{\frac{5}{8}}
			\|\partial_3^5u_3(\tau)\|_{L^2}^{\frac{3}{8}}
			\|\uh(\tau)\|_{L^{\infty}}\right.\\
			&\qquad \qquad \qquad
			\left.+
			\|\partial_3u_3(\tau)\|_{L^{\infty}}
			\|\partial_3\uh(\tau)\|_{L^2}^{\frac{5}{6}}
			\|\partial_3^4\uh(\tau)\|_{L^2}^{\frac{1}{6}}\right.\\
			&\qquad \qquad \qquad
			\left.+
			\|u_3(\tau)\|_{L^{\infty}}
			\|\partial_3\uh(\tau)\|_{L^2}^{\frac{5}{8}}
			\|\partial_3^5\uh(\tau)\|_{L^2}^{\frac{3}{8}}
		\right)\\
		&\leqslant
			CA^2(1+\tau)^{-\frac{13}{8}}.
	\end{align*}
	Combining these estimates, we obtain
	\begin{align*}
		&\left\|
		\partial_3
		\int_0^t e^{(t-\tau)\Deltah}\partial_3(u_3\uh)(\tau)d\tau
		\right\|_{L^{\infty}}\\
		&\quad\leqslant
		C
		\int_0^{\frac{t}{2}}
		\|\Gh(t-\tau)\|_{L^{\infty}(\mathbb{R}^2)}
		\|\partial_3^{2}(u_3\uh)(\tau)\|_{\Lh^1\Lv^{\infty}}d\tau\\
		&\qquad+
		C
		\int_{\frac{t}{2}}^t
		\|\Gh(t-\tau)\|_{L^2(\mathbb{R}^2)}
		\|\partial_3^2(u_3\uh)(\tau)\|_{\Lh^2\Lv^{\infty}}d\tau\\
		&\quad\leqslant
		CA^2
		\int_0^{\frac{t}{2}}
		(t-\tau)^{-1}
		(1+\tau)^{-\frac{9}{8}}d\tau
		+
		CA^2
		\int_{\frac{t}{2}}^t
		(t-\tau)^{-\frac{1}{2}}
		(1+\tau)^{-\frac{13}{8}}d\tau\\
		&\quad\leqslant
		CA^2(1+t)^{-1}
	\end{align*}
	for $t>0$.

	We next show (\ref{Duha_est_2-1}).
	For the case $p=1$, we easily see that
	\begin{align}\label{pat3(u_ku_l)_L^1}
		\begin{split}
			\|\partial_3^{\alpha_3}(u_ku_l)(\tau)\|_{L^1}
			&\leqslant
			2\|\partial_3^{\alpha_3}u(\tau)\|_{L^1}\|u(\tau)\|_{L^{\infty}}
			\leqslant
			CA^2(1+\tau)^{-1} \qquad {\rm for\ }k=1,2,\\
			\|\partial_3^{\alpha_3}(u_3u_l)(\tau)\|_{L^1}
			&\leqslant
			\begin{cases}
				\|u_3(\tau)\|_{L^{\infty}}\|u(\tau)\|_{L^1} & (\alpha_3=0)\\
				\|\partial_3u_3(\tau)\|_{L^{\infty}}\|u(\tau)\|_{L^1}
				+\|u_3(\tau)\|_{L^{\infty}}\|\partial_3u(\tau)\|_{L^1} & (\alpha_3=1)
			\end{cases}\\
			&\leqslant
			CA^2(1+\tau)^{- \frac{3}{2}}
		\end{split}
	\end{align}
	and also have
	\begin{align*}
		\|\nabla^{\alpha}(u_ku_l)\|_{L^1}
		\leqslant
		2\|\nabla^{\alpha}u(\tau)\|_{L^{\infty}}\|u(\tau)\|_{L^1}
		\leqslant
		CA^2(1+\tau)^{-1-\frac{|\alphah|}{2}}.
	\end{align*}
	Let $\sigma_k=1$ (if $k=1,2$) and $\sigma_3=3/2$.
	Then, we have
	\begin{align*}
		&\left\|
		\nabla^{\alpha}
		\int_0^t e^{(t-\tau)\Deltah}\nablah(u_ku_l)(\tau)d\tau
		\right\|_{L^1}\\
		&\quad
		\leqslant
		\int_0^{\frac{t}{2}}
		\|\nablah^{\alphah}\nablah\Gh(t-\tau)\|_{L^1(\mathbb{R}^2)}\|\partial_3^{\alpha_3}(u_ku_l)(\tau)\|_{L^1}d\tau
		+
		\int_{\frac{t}{2}}^t
		\|\nablah\Gh(t-\tau)\|_{L^1(\mathbb{R}^2)}\|\nabla^{\alpha}(u_ku_l)(\tau)\|_{L^1}d\tau\\
		&\quad
		\leqslant
		CA^2
		\int_0^{\frac{t}{2}}
		(t-\tau)^{-\frac{1+|\alphah|}{2}}(1+\tau)^{-\sigma_k}d\tau
		+
		CA^2
		\int_{\frac{t}{2}}^t
		(t-\tau)^{-\frac{1}{2}}(1+\tau)^{-1- \frac{|\alphah|}{2}}d\tau\\
		&\quad\leqslant
		\begin{cases}
			CA^2(1+t)^{-\frac{1+|\alphah|}{2}}\log(2+t) & (k=1,2)\\
			CA^2(1+t)^{-\frac{1+|\alphah|}{2}} & (k=3)
		\end{cases}.
	\end{align*}
	For the case $p=\infty$, we have
	\begin{align*}
		\|\partial_3^{\alpha_3}(u_ku_l)(\tau)\|_{\Lh^1\Lv^{\infty}}
		&\leqslant
		C\|\partial_3^{\alpha_3}u(\tau)\|_{L^{\infty}}\|u(\tau)\|_{\Lh^1\Lv^{\infty}}
		\leqslant
		CA^2(1+\tau)^{-1} \qquad {\rm for\ }k=1,2,\\
		\|\partial_3^{\alpha_3}(u_3u_l)(\tau)\|_{\Lh^1\Lv^{\infty}}
		&\leqslant
		\begin{cases}
			\|u_3(\tau)\|_{L^{\infty}}\|\uh(\tau)\|_{\Lh^1\Lv^{\infty}} & (\alpha_3=0)\\
			\|\partial_3u_3(\tau)\|_{L^{\infty}}\|\uh(\tau)\|_{\Lh^1\Lv^{\infty}}
			+
			\|u_3(\tau)\|_{\Lh^1\Lv^{\infty}}\|\partial_3\uh(\tau)\|_{L^{\infty}} & (\alpha_3=1)
		\end{cases}\\
		&\leqslant
		CA^2(1+\tau)^{-\frac{3}{2}}
	\end{align*}
	and
	\begin{align*}
		\|\nabla^{\alpha}(u_ku_l)(\tau)\|_{L^{\infty}}
		\leqslant
		C\|\nabla^{\alpha}u(\tau)\|_{L^{\infty}}\|u(\tau)\|_{L^{\infty}}
		\leqslant
		CA^2(1+\tau)^{-2- \frac{|\alphah|}{2}}.
	\end{align*}
	Then, we obtain
	\begin{align*}
		&\left\|
		\nabla^{\alpha}
		\int_0^t e^{(t-\tau)\Deltah}\nablah(u_ku_l)(\tau)d\tau
		\right\|_{L^{\infty}}\\
		&\quad
		\leqslant
		\int_0^{\frac{t}{2}}
		\|\nablah^{\alphah}\nablah\Gh(t-\tau)\|_{L^{\infty}(\mathbb{R}^2)}
		\|\partial_3^{\alpha_3}(u_ku_l)(\tau)\|_{\Lh^1\Lv^{\infty}}d\tau
		+
		\int_{\frac{t}{2}}^t
		\|\nablah\Gh(t-\tau)\|_{L^1(\mathbb{R}^2)}
		\|\nabla^{\alpha}(u_ku_l)(\tau)\|_{L^{\infty}}d\tau\\
		&\quad
		\leqslant
		CA^2
		\int_0^{\frac{t}{2}}
		(t-\tau)^{-1-\frac{1+|\alphah|}{2}}(1+\tau)^{-\sigma_k}d\tau
		+
		CA^2
		\int_{\frac{t}{2}}^t
		(t-\tau)^{-\frac{1}{2}}(1+\tau)^{-2- \frac{|\alphah|}{2}}d\tau\\
		&\quad\leqslant
		\begin{cases}
			CA^2t^{-1-\frac{1+|\alphah|}{2}}\log(2+t) & (k=1,2)\\
			CA^2t^{-1-\frac{1+|\alphah|}{2}} & (k=3)
		\end{cases}
	\end{align*}
	for $t\geqslant 1$
	and
	\begin{align*}
		\left\|
		\nabla^{\alpha}
		\int_0^t e^{(t-\tau)\Deltah}\nablah(u_ku_l)(\tau)d\tau
		\right\|_{L^{\infty}}
		&
		\leqslant
		C\int_0^t
		(t-\tau)^{-\frac{1}{2}}\|\nabla^{\alpha}(u_ku_l)(\tau)\|_{L^{\infty}}d\tau\\
		&
		\leqslant
		CA^2
		\int_0^t
		(t-\tau)^{-\frac{1}{2}}(1+\tau)^{-2- \frac{|\alphah|}{2}}d\tau\\
		&\leqslant
		CA^2
	\end{align*}
	for $0< t \leqslant 1$.

	We finally prove (\ref{Duha_est_3-1}).
	For the case $p=1$,
	it follows from Lemma \ref{lemm:Linear_est_3} and the inequalities in the proof of (\ref{Duha_est_2-1}) that
	\begin{align*}
		&\left\|
		\nabla^{\alpha}
		\int_0^t K^{(m)}_{\beta,\gamma}(t-\tau)*(u_ku_l)(\tau)d\tau
		\right\|_{L^1}\\
		&\quad
		\leqslant
		C\int_0^{\frac{t}{2}}
		\|\nablah^{\alphah}K^{(m)}_{\beta,\gamma}(t-\tau)\|_{L^1}\|\partial_3^{\alpha_3}(u_ku_l)(\tau)\|_{L^1}d\tau
		+
		C\int_{\frac{t}{2}}^t
		\|K^{(m)}_{\beta,\gamma}(t-\tau)\|_{L^1}\|\nabla^{\alpha}(u_ku_l)(\tau)\|_{L^1}d\tau\\
		&\quad
		\leqslant
		CA^2
		\int_0^{\frac{t}{2}}
		(t-\tau)^{-\frac{1+|\alphah|}{2}}(1+\tau)^{-\sigma_k}d\tau
		+
		CA^2
		\int_{\frac{t}{2}}^t
		(t-\tau)^{-\frac{1}{2}}(1+\tau)^{-1- \frac{|\alphah|}{2}}d\tau\\
		&\quad\leqslant
		\begin{cases}
			CA^2(1+t)^{-\frac{1+|\alphah|}{2}}\log(2+t) & (k=1,2)\\
			CA^2(1+t)^{-\frac{1+|\alphah|}{2}} & (k=3)
		\end{cases}.
	\end{align*}
	For the case $p=\infty$, we see that
	\begin{align*}
		&\left\|
		\nabla^{\alpha}
		\int_0^t K^{(m)}_{\beta,\gamma}(t-\tau)*(u_ku_l)(\tau)d\tau
		\right\|_{L^{\infty}}\\
		&\quad
		\leqslant
		C\int_0^{\frac{t}{2}}
		\|\nablah^{\alphah}K^{(m)}_{\beta,\gamma}(t-\tau)\|_{L^{\infty}}\|\partial_3^{\alpha_3}(u_ku_l)(\tau)\|_{L^1}d\tau
		+
		C\int_{\frac{t}{2}}^t
		\|K^{(m)}_{\beta,\gamma}(t-\tau)\|_{L^{\frac{4}{3}}}\|\nabla^{\alpha}(u_ku_l)(\tau)\|_{L^4}d\tau\\
		&\quad
		\leqslant
		C
		\int_0^{\frac{t}{2}}
		(t-\tau)^{-\frac{3}{2}-\frac{1+|\alphah|}{2}}\|\partial_3^{\alpha_3}(u_ku_l)(\tau)\|_{L^1}d\tau
		+
		C
		\int_{\frac{t}{2}}^t
		(t-\tau)^{-\frac{7}{8}}\|\nabla^{\alpha}u(\tau)\|_{L^{\infty}}\|u(\tau)\|_{L^4}d\tau\\
		&\quad
		\leqslant
		CA^2
		\int_0^{\frac{t}{2}}
		(t-\tau)^{-\frac{3}{2}-\frac{1+|\alphah|}{2}}(1+\tau)^{-\sigma_k}d\tau
		+
		CA^2
		\int_{\frac{t}{2}}^t
		(t-\tau)^{-\frac{7}{8}}(1+\tau)^{-\frac{7}{4}- \frac{|\alphah|}{2}}d\tau\\
		&\quad\leqslant
		\begin{cases}
			CA^2t^{-\frac{9}{8}-\frac{1+|\alphah|}{2}}\log(2+t) & (k=1,2)\\
			CA^2t^{-\frac{9}{8}-\frac{1+|\alphah|}{2}} & (k=3)
		\end{cases}
	\end{align*}
	for $t\geqslant 1$  and
	\begin{align*}
		\left\|
		\nabla^{\alpha}
		\int_0^t K^{(m)}_{\beta,\gamma}(t-\tau)*(u_ku_l)(\tau)d\tau
		\right\|_{L^{\infty}}
		&
		\leqslant
		C\int_0^t
		\|K^{(m)}_{\beta,\gamma}(t-\tau)\|_{L^1}\|\nabla^{\alpha}u(\tau)\|_{L^{\infty}}\|u_l(\tau)\|_{L^{\infty}}d\tau\\
		&
		\leqslant
		CA^2
		\int_0^t
		(t-\tau)^{-\frac{1}{2}}(1+\tau)^{-2- \frac{|\alphah|}{2}}d\tau\\
		&\leqslant
		CA^2
	\end{align*}
	for $0< t\leqslant 1$.
	Thus, we complete the proof.
\end{proof}
\begin{lemm}\label{lemm:Duha_est_4}
	Let $u$ satisfy {\bf (A1)}  and {\bf (A2)} for some $s\in \mathbb{N}$ with $s \geqslant 9$, $0<T\leqslant \infty$ and $A\geqslant 0$.
	Then, there exists an absolute positive constant $C$ such that
	\begin{align*}
		\|\nabla^{\alpha}\Nhh_m[u](t)\|_{\Lh^{\infty}\Lv^1}\leqslant CA^2t^{-1 -\frac{|\alphah|}{2}},\qquad
		\|\nabla^{\alpha}\Nvv_n[u](t)\|_{\Lh^{\infty}\Lv^1}\leqslant CA^2t^{-1 -\frac{|\alphah|}{2}}
	\end{align*}
	for all $m=1,2,3,4,5$, $n=1,2,3$, $0<t<T$ and $\alpha\in (\mathbb{N}\cup\{0\})^3$ with $|\alpha|\leqslant 1$.
\end{lemm}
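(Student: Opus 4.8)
The plan is to estimate each of the eight Duhamel terms in $\Lh^\infty\Lv^1$ by running the same scheme as in the proof of Lemma \ref{lemm:Duha_est_1}, but tuning the anisotropic exponents so that the vertical index is frozen at $1$ throughout. I would split the terms into two families according to their kernels: the heat-semigroup terms $\Nhh_1,\Nhh_2,\Nhh_3,\Nvv_1$, built from $e^{(t-\tau)\Deltah}$, and the $K$-convolution terms $\Nhh_4,\Nhh_5,\Nvv_2,\Nvv_3$, built from the operators $K^{(m)}_{\beta,\gamma}(t-\tau)*$ with $|\beta|+\gamma=3$ (see (\ref{K^m})). For every term I would decompose $\int_0^t=\int_0^{t/2}+\int_{t/2}^t$.

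For the heat-semigroup family, write $\nabla^{\alpha}=\nablah^{\alphah}\partial_3^{\alpha_3}$ and commute $\partial_3^{\alpha_3}$ through $e^{(t-\tau)\Deltah}$ onto the quadratic factor (e.g.\ $\nabla^{\alpha}\Nhh_1[u]=-\int_0^t\nablah^{\alphah}e^{(t-\tau)\Deltah}\partial_3^{\alpha_3+1}(u_3\uh)(\tau)\,d\tau$), then apply Lemma \ref{lemm:Linear_est_1}(1) with target $\Lh^\infty\Lv^1$ (so $p_2=\infty$, $q=1$). On $[0,t/2]$ I would load all available horizontal derivatives onto the kernel, which is affordable since $t-\tau\geqslant t/2$, and measure the nonlinearity in $L^1=\Lh^1\Lv^1$; the kernel then contributes $(t-\tau)^{-1-(\#\mathrm{horiz.\,deriv.})/2}\lesssim t^{-1-|\alphah|/2}$, and the $L^1$ time-decay bounds already obtained in the proof of Lemma \ref{lemm:Duha_est_1} (e.g.\ (\ref{del3^2(u_3uh)_L^1}) and (\ref{pat3(u_ku_l)_L^1})) make the $\tau$-integral convergent, up to a harmless $\log t$ for the purely horizontal-derivative nonlinearities which is absorbed by the extra half power. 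On $[t/2,t]$ I would instead keep at most one horizontal derivative on the kernel, so that its $\Lh^1$-norm stays integrable ($\lesssim(t-\tau)^{-1/2}$), and transfer the remaining derivatives onto the nonlinearity, which I then measure in $\Lh^\infty\Lv^1$.

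For the $K$-convolution family I would repeat this time splitting, now using Lemma \ref{lemm:Linear_est_3} to bound $\|\nablah^{\alphah}K^{(m)}_{\beta,\gamma}(t-\tau)\|_{\Lh^{p}\Lv^1}$. Since $|\beta|+\gamma=3$, these kernels decay as the $3$D heat kernel with one horizontal derivative (Remark \ref{rem:Linear_est_3}), giving a strictly better temporal margin than the heat-semigroup terms; consequently these bounds close with room to spare once the nonlinear factors are controlled exactly as above.

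The main obstacle is the $[t/2,t]$ estimate of the quadratic nonlinearities in $\Lh^\infty\Lv^1$. Since $\tau\simeq t$ there and $\int_{t/2}^t(t-\tau)^{-|\alphah|/2}\,d\tau\simeq t^{1-|\alphah|/2}$, closing at $t^{-1-|\alphah|/2}$ forces the nonlinear factor to decay like $(1+\tau)^{-2}$. To produce such a rate I would split each product by anisotropic H\"older, for instance $\|\partial_3^2(u_3\uh)\|_{\Lh^\infty\Lv^1}\leqslant\sum_{j}\|\partial_3^{j}u_3\|_{\Lh^\infty\Lv^2}\|\partial_3^{2-j}\uh\|_{\Lh^\infty\Lv^2}$, bound each $\Lh^\infty\Lv^2$ factor by interpolating between $\Lh^\infty\Lv^1$ and $\Lh^\infty\Lv^\infty=L^\infty$ and passing to $L^2$-based norms via two-dimensional Sobolev embedding in $\xh$ together with one-dimensional Gagliardo--Nirenberg interpolation in $x_3$, and finally feed in the decay of (A2) and Remark \ref{rem:assumption} against the boundedness of $\|u(\tau)\|_{H^s}$ from (A1). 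It is precisely this interpolation, which must absorb up to the $\partial_3^2$-derivative of a product while still extracting two full powers of $t$, that consumes high-order vertical derivatives and pins the regularity threshold at $s\geqslant 9$.
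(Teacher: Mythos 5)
Your skeleton (the splitting $\int_0^t=\int_0^{t/2}+\int_{t/2}^t$, kernel in $L^\infty(\mathbb{R}^2)$ against the nonlinearity in $L^1$ on the first piece, kernel in $L^1(\mathbb{R}^2)$ on the second) is indeed the paper's, and your $[0,t/2]$ estimates go through. The genuine gap sits exactly at what you call the main obstacle: the $[t/2,t]$ bound for the quadratic terms in $\Lh^{\infty}\Lv^1$. You propose to manufacture the required $(1+\tau)^{-2}$ rate out of {\bf (A1)}--{\bf (A2)} alone, via H\"older in $\Lh^{\infty}\Lv^2\times\Lh^{\infty}\Lv^2$, two-dimensional Sobolev embedding in $\xh$, and vertical Gagliardo--Nirenberg. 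This cannot close. The Agmon-type embedding $\|f\|_{\Lh^{\infty}\Lv^2}\lesssim\|f\|_{L^2}^{1/2}\|\nablah^2f\|_{L^2}^{1/2}$ costs two horizontal derivatives, while {\bf (A2)} supplies decay only for $|\alpha|\leqslant 1$ and {\bf (A1)} gives mere $H^s$ boundedness; interpolating $\|\nablah^2u\|_{L^2}\lesssim\|\nablah u\|_{L^2}^{(s-2)/(s-1)}\|u\|_{H^s}^{1/(s-1)}$ yields $\|u_3\|_{\Lh^{\infty}\Lv^2}\lesssim A(1+\tau)^{-\frac{3}{8}-\frac{5}{8}\cdot\frac{s-2}{s-1}}$ and $\|\uh\|_{\Lh^{\infty}\Lv^2}\lesssim A(1+\tau)^{-\frac{1}{4}-\frac{1}{2}\cdot\frac{s-2}{s-1}}$, hence at $s=9$ only $\|u_3\uh\|_{\Lh^{\infty}\Lv^1}\lesssim A^2(1+\tau)^{-1.61}$, and no better than $(1+\tau)^{-7/4}$ even as $s\to\infty$; for $\uh\otimes\uh$ the ceiling is $(1+\tau)^{-3/2}$. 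Both fall short of the $(1+\tau)^{-2}$ (resp.\ $(1+\tau)^{-3/2-|\alphah|/2}$ against a $(t-\tau)^{-1/2}$ kernel) that you yourself identify as necessary. The obstruction is structural, not a matter of sharper interpolation: $\Lh^{\infty}\Lv^1$ decay is not controlled by any combination of $L^p(\mathbb{R}^3)$ norms and $H^s$ bounds, as $f_R(x)=\phi(\xh)R^{-1/2}\psi(x_3/R)$ shows ($\|f_R\|_{L^2}\sim 1$, $\|f_R\|_{L^\infty}\to 0$, yet $\|f_R\|_{\Lh^{\infty}\Lv^1}\sim R^{1/2}\to\infty$).

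What the paper actually does on $[t/2,t]$ is feed in the hypothesis {\bf (A3)}, $\|\nabla^{\alpha}u(\tau)\|_{\Lh^{\infty}\Lv^1}\leqslant A\tau^{-1-\frac{|\alphah|}{2}}$, against the $L^\infty$ decay from {\bf (A2)}: for instance $\|\nabla^{\alpha}(u_ku_l)(\tau)\|_{\Lh^{\infty}\Lv^1}\leqslant 2\|u(\tau)\|_{\Lh^{\infty}\Lv^1}\|\nabla^{\alpha}u(\tau)\|_{L^{\infty}}\lesssim A^2\tau^{-1}(1+\tau)^{-1-\frac{|\alphah|}{2}}$, and, for $\alpha_3=1$, the Gagliardo--Nirenberg steps (\ref{GW})--(\ref{GW2}) interpolate $\partial_3^2$ against $\|\partial_3u\|_{L^\infty}$, $\|\partial_3 u\|_{\Lh^{\infty}\Lv^1}$ and $\|u\|_{H^7}$, $\|u\|_{H^9}$ --- this, not your mechanism, is what pins $s\geqslant 9$. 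The statement as printed lists only {\bf (A1)}--{\bf (A2)}, but the proof visibly uses the {\bf (A3)}-type bounds, consistently with Proposition \ref{prop:Lh^infLv1-est}, where the lemma is run inside a continuity argument with {\bf (A3)} as the bootstrap hypothesis and the conclusion's quadratic constant $CA^2$ closing the loop for small data. So you should either read {\bf (A3)} into the hypotheses and restructure your $[t/2,t]$ estimates around it, or accept that the route you describe --- deriving the vertical-$L^1$ decay of the nonlinearity from $L^p$ decay plus regularity --- provably cannot reach the stated rate.
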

\begin{proof}
	It suffices to show
	\begin{align}
		&\left\|\nabla^{\alpha}\int_0^t e^{(t-\tau)\Deltah}\partial_3(u_3\uh)(\tau)d\tau\right\|_{\Lh^{\infty}\Lv^1}
		\leqslant
		C A^2 t^{-1- \frac{|\alphah|}{2}},\label{Duha_est_4-1}\\
		&\left\|\nabla^{\alpha}\int_0^t e^{(t-\tau)\Deltah}\nablah(u_ku_l)(\tau)d\tau\right\|_{\Lh^{\infty}\Lv^1}
		\leqslant
		C A^2 t^{-1- \frac{|\alphah|}{2}},\label{Duha_est_4-2}\\
		&\left\|\nabla^{\alpha}\int_0^t K^{(m)}_{\beta,\gamma}(t-\tau)*(u_ku_l)(\tau)d\tau\right\|_{\Lh^{\infty}\Lv^1}
		\leqslant
		C A^2 t^{-1- \frac{|\alphah|}{2}}\label{Duha_est_4-3}
	\end{align}
	for $k,l=1,2,3$, $m=1,2$ and $0<t<T$,
	where $K_{\beta,\gamma}^{(m)}$ are defined by (\ref{K^m})
	for $(\beta,\gamma)\in (\mathbb{N}\cup \{0\})^2\times (\mathbb{N}\cup \{0\})$ satisfying $|\beta|+\gamma=3$.

	We first show (\ref{Duha_est_4-1}).
	For the case $\alpha_3=0$, we have
	\begin{align*}
		&\left\|\nablah^{\alphah}\int_0^te^{(t-\tau)\Deltah}\partial_3(u_3\uh)(\tau)d\tau\right\|_{\Lh^{\infty}\Lv^1}\\
		&\quad\leqslant
		\int_0^{\frac{t}{2}}
		\|\nablah^{\alphah}\Gh(t-\tau)\|_{L^{\infty}(\mathbb{R}^2)}
		\|\partial_3(u_3\uh)(\tau)\|_{L^1}d\tau
		+
		\int_{\frac{t}{2}}^t
		\|\nablah^{\alphah}\Gh(t-\tau)\|_{L^1(\mathbb{R}^2)}
		\|\partial_3(u_3\uh)(\tau)\|_{\Lh^{\infty}\Lv^1}d\tau\\
		&\quad\leqslant
		C\int_0^{\frac{t}{2}}
		(t-\tau)^{-1- \frac{|\alphah|}{2}}
		\left(\|\partial_3u_3(\tau)\|_{L^{\infty}}\|\uh(\tau)\|_{L^1}+\|u_3(\tau)\|_{L^{\infty}}\|\partial_3\uh(\tau)\|_{L^1}\right)d\tau\\
		&\qquad+
		C\int_{\frac{t}{2}}^t
		(t-\tau)^{-\frac{|\alphah|}{2}}
		\left(\|\partial_3u_3(\tau)\|_{L^{\infty}}\|\uh(\tau)\|_{\Lh^{\infty}\Lv^1}+\|u_3(\tau)\|_{L^{\infty}}\|\partial_3\uh(\tau)\|_{\Lh^{\infty}\Lv^1}\right)d\tau\\
		&\quad\leqslant
		CA^2\int_0^{\frac{t}{2}}
		(t-\tau)^{-1- \frac{|\alphah|}{2}}(1+\tau)^{-\frac{3}{2}}d\tau
		+
		CA^2\int_{\frac{t}{2}}^t
		(t-\tau)^{-\frac{|\alphah|}{2}}
		(1+\tau)^{-\frac{3}{2}}\tau^{-1}d\tau\\
		&\quad\leqslant
		CA^2t^{-1- \frac{|\alphah|}{2}}.
	\end{align*}
	For the case $\alpha_3=1$,
	it follows from the Gagliardo-Nirenberg interpolation inequality that
	\begin{align}
		&\|\partial_3^2f\|_{L^{\infty}}
		\leqslant
		C\|\partial_3^5f\|_{\Lh^{\infty}\Lv^2}^{\frac{2}{7}}\|\partial_3f\|_{L^{\infty}}^{\frac{5}{7}}
		\leqslant
		C\|f\|_{H^7}^{\frac{2}{7}}\|\partial_3f\|_{L^{\infty}}^{\frac{5}{7}},\label{GW}\\
		&\|\partial_3^2f\|_{\Lh^{\infty}\Lv^2}
		\leqslant
		C\|\partial_3^7f\|_{\Lh^{\infty}\Lv^2}^{\frac{3}{13}}\|\partial_3f\|_{\Lh^{\infty}\Lv^1}^{\frac{10}{13}}
		\leqslant
		C\|f\|_{H^9}^{\frac{3}{13}}\|\partial_3f\|_{\Lh^{\infty}\Lv^1}^{\frac{10}{13}}.\label{GW2}
	\end{align}
	Using (\ref{GW}) and (\ref{GW2}), we obtain
	\begin{align}\label{del3^2(u_3uh)_Lh^infLv1}
		\begin{split}
			\|\partial_3^2(u_3\uh)(\tau)\|_{\Lh^{\infty}\Lv^1}
			&\leqslant
			\|\partial_3^2u_3(\tau)\|_{L^{\infty}}\|\uh(\tau)\|_{\Lh^{\infty}\Lv^1}
			+2\|\partial_3u_3(\tau)\|_{L^{\infty}}\|\partial_3\uh(\tau)\|_{\Lh^{\infty}\Lv^1}\\
			&\quad
			+\|u_3(\tau)\|_{\Lh^{\infty}\Lv^2}\|\partial_3^2\uh(\tau)\|_{\Lh^{\infty}\Lv^2}\\
			&\leqslant
			C\left(
				\|u_3(\tau)\|_{H^7}^{\frac{2}{7}}\|\partial_3u_3(\tau)\|_{L^{\infty}}^{\frac{5}{7}}\|\uh(\tau)\|_{\Lh^{\infty}\Lv^1}\right.\\
				&\qquad \quad
				+
				\|\partial_3u_3(\tau)\|_{L^{\infty}}\|\partial_3\uh(\tau)\|_{\Lh^{\infty}\Lv^1}\\
				&\left.\qquad \quad
				+
				\|u_3(\tau)\|_{L^{\infty}}^{\frac{1}{2}}\|u_3(\tau)\|_{\Lh^{\infty}\Lv^1}^{\frac{1}{2}}
				\|\uh(\tau)\|_{H^9}^{\frac{3}{13}}\|\partial_3\uh(\tau)\|_{\Lh^{\infty}\Lv^1}^{\frac{10}{13}}
			\right)\\
			&\leqslant
			CA^2
			\left\{
			(1+\tau)^{-\frac{3}{4}}\tau^{-\frac{33}{26}}
			+
			(1+\tau)^{-\frac{15}{14}}\tau^{-1}
			\right\}.
		\end{split}
	\end{align}
	By (\ref{del3^2(u_3uh)_L^1}) and (\ref{del3^2(u_3uh)_Lh^infLv1}),
	it holds
	\begin{align*}
		&\left\|\partial_3\int_0^te^{(t-\tau)\Deltah}\partial_3(u_3\uh)(\tau)d\tau\right\|_{\Lh^{\infty}\Lv^1}\\
		&\quad \leqslant
		\int_0^{\frac{t}{2}}
		\|\Gh(t-\tau)\|_{L^{\infty}}\|\partial_3^2(u_3\uh)(\tau)\|_{L^1}d\tau
		+
		\int_{\frac{t}{2}}^t
		\|\Gh(t-\tau)\|_{L^1}\|\partial_3^2(u_3\uh)(\tau)\|_{\Lh^{\infty}\Lv^1}d\tau\\
		&\quad \leqslant
		CA^2\int_0^{\frac{t}{2}}
		(t-\tau)^{-1}(1+\tau)^{-\frac{13}{12}}d\tau
		+
		CA^2\int_{\frac{t}{2}}^t
		(1+\tau)^{-\frac{3}{4}}\tau^{-\frac{33}{26}}
		+
		(1+\tau)^{-\frac{15}{14}}\tau^{-1}
		d\tau\\
		&\quad\leqslant
		CA^2t^{-1},
	\end{align*}
	which completes the proof of (\ref{Duha_est_4-1}).

	Next, we show (\ref{Duha_est_4-2}).
	It is easy to see that
	\begin{align}
		\|\nabla^{\alpha}(u_ku_l)(\tau)\|_{\Lh^{\infty}\Lv^1}
		\leqslant
		2\|u(\tau)\|_{\Lh^{\infty}\Lv^1}\|\nabla^{\alpha}u(\tau)\|_{L^{\infty}}
		\leqslant
		CA^2\tau^{-1}(1+\tau)^{-1- \frac{|\alphah|}{2}}.\label{pat3(u_ku_l)_Lh^infLv1}
	\end{align}
	It follows from (\ref{pat3(u_ku_l)_L^1}) and (\ref{pat3(u_ku_l)_Lh^infLv1}) that
	\begin{align}\label{pat3(u_ku_l)_Lh^2Lv1}
		\begin{split}
			\|\partial_3^{\alpha_3}(u_ku_l)(\tau)\|_{\Lh^2\Lv^1}
			&\leqslant
			\|\partial_3^{\alpha_3}(u_ku_l)(\tau)\|_{L^1}^{\frac{1}{2}}
			\|\partial_3^{\alpha_3}(u_ku_l)(\tau)\|_{\Lh^{\infty}\Lv^1}^{\frac{1}{2}}\\
			&\leqslant
			CA^2\tau^{-\frac{1}{2}}(1+\tau)^{-\frac{3}{2}}.
		\end{split}
	\end{align}
	Hence, by (\ref{pat3(u_ku_l)_Lh^infLv1}) and (\ref{pat3(u_ku_l)_Lh^2Lv1}), we have
	\begin{align*}
		&\left\|
		\nabla^{\alpha}
		\int_0^t e^{(t-\tau)\Deltah}\nablah(u_ku_l)(\tau) d\tau
		\right\|_{\Lh^{\infty}\Lv^1}\\
		&\quad\leqslant
		\int_0^{\frac{t}{2}}
		\|\nabla^{\alphah}\nablah\Gh(t-\tau)\|_{L^2(\mathbb{R}^2)}\|\partial_3^{\alpha_3}(u_ku_l)(\tau)\|_{\Lh^2\Lv^1}d\tau
		+
		\int_{\frac{t}{2}}^t
		\|\nablah \Gh(t-\tau)\|_{L^1(\mathbb{R}^2)}\|\nabla^{\alpha}(u_ku_l)(\tau)\|_{\Lh^{\infty}\Lv^1}d\tau\\
		&\quad\leqslant
		CA^2\int_0^{\frac{t}{2}}
		(t-\tau)^{-1-\frac{|\alphah|}{2}}\tau^{-\frac{1}{2}}(1+\tau)^{-\frac{3}{2}}d\tau
		+
		CA^2\int_{\frac{t}{2}}^t
		(t-\tau)^{-\frac{1}{2}}\tau^{-1}(1+\tau)^{-1-\frac{|\alphah|}{2}}d\tau\\
		&\quad\leqslant
		CA^2t^{-1-\frac{|\alphah|}{2}}.
	\end{align*}
	This completes the proof of (\ref{Duha_est_4-2}).

	Finally, we prove (\ref{Duha_est_4-3}).
	We obtain from (\ref{pat3(u_ku_l)_Lh^infLv1}) and (\ref{pat3(u_ku_l)_Lh^2Lv1}) that
	\begin{align*}
		&\left\|
		\nabla^{\alpha}
		\int_0^t K^{(m)}_{\beta,\gamma}(t-\tau)*(u_ku_l)(\tau) d\tau
		\right\|_{\Lh^{\infty}\Lv^1}\\
		&\quad\leqslant
		\int_0^{\frac{t}{2}}
		\|\nablah^{\alphah}K^{(m)}_{\beta,\gamma}(t-\tau)\|_{\Lh^2\Lv^1}\|\partial_3^{\alpha_3}(u_ku_l)(\tau)\|_{\Lh^2\Lv^1}d\tau
		+
		\int_{\frac{t}{2}}^t
		\|K^{(m)}_{\beta,\gamma}(t-\tau)\|_{L^1}\|\nabla^{\alpha}(u_ku_l)(\tau)\|_{\Lh^{\infty}\Lv^1}d\tau\\
		&\quad\leqslant
		CA^2\int_0^{\frac{t}{2}}
		(t-\tau)^{-1- \frac{|\alphah|}{2}}\tau^{-\frac{1}{2}}(1+\tau)^{-\frac{3}{2}}d\tau
		+
		CA^2\int_{\frac{t}{2}}^t
		(t-\tau)^{-\frac{1}{2}}\tau^{-1}(1+\tau)^{-1- \frac{|\alphah|}{2}}d\tau\\
		&\quad\leqslant
		CA^2t^{-1- \frac{|\alphah|}{2}}.
	\end{align*}
	The proof is completed.
\end{proof}
\subsection*{Asymptotic Expansions of the Duhamel Terms}
Next, we consider the asymptotic expansions of Duhamel terms $\Nhh_1[u]$ and $\Nvv_1[u]$.
\begin{lemm}\label{lemm:Duha_expa_1}
	Let $u$ satisfy {\bf (A1)} and {\bf (A2)} for some $s\in \mathbb{N}$ with $s\geqslant 3$, $A>0$ and $T=\infty$.
	Then, for $1\leqslant p\leqslant \infty$, it holds
	\begin{align}\label{Duha_expa_1-1}
			\begin{split}
				\lim_{t\to \infty}
				t^{1- \frac{1}{p}}
				&\left\|
				\Nhh_1[u](t,x)
				+
				\Gh(t,\xh)
				\int_0^{\infty}
				\int_{\mathbb{R}^2}\partial_3(u_3\uh)(\tau,\yh,x_3)d\yh d\tau
				\right\|_{L^p_x}
				=0.
			\end{split}
	\end{align}
	If, in addition, $u$ satisfies {\bf (A3)} and {\bf (A4)}, then for $1< p\leqslant \infty$ there exists a positive constant $C=C(p)$ such that
	\begin{align}\label{Duha_expa_1-2}
			\begin{split}
				&\left\|
				\Nhh_1[u](t,x)
				+
				\Gh(t,\xh)
				\int_0^{\infty}
				\int_{\mathbb{R}^2}\partial_3(u_3\uh)(\tau,\yh,x_3)d\yh d\tau
				\right\|_{L^p_x}\\
				&\qquad\leqslant
				CA(A+B)
				t^{-(1- \frac{1}{p})- \frac{1}{2}}
				\log(2+t)
			\end{split}
	\end{align}
	for all $t>0$.
\end{lemm}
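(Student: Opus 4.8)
The plan is to treat both assertions with a single decomposition, using the qualitative linear asymptotics Lemma~\ref{lemm:Linear_est_1}(2) for (\ref{Duha_expa_1-1}) and the quantitative one Lemma~\ref{lemm:Linear_est_1}(3) for (\ref{Duha_expa_1-2}). Write $f(\tau):=\partial_3(u_3\uh)(\tau)$ and $g(\tau,x_3):=\int_{\mathbb{R}^2}f(\tau,\yh,x_3)d\yh$, so that $\Nhh_1[u](t)=-\int_0^te^{(t-\tau)\Deltah}f(\tau)d\tau$. Splitting the time integral at $\tau=t/2$ and inserting the profile, I would decompose
\begin{align*}
\Nhh_1[u](t)+\Gh(t,\xh)\int_0^{\infty}g(\tau)d\tau
&=-\int_0^{t/2}\left[e^{(t-\tau)\Deltah}f(\tau)-\Gh(t,\xh)g(\tau)\right]d\tau\\
&\quad-\int_{t/2}^te^{(t-\tau)\Deltah}f(\tau)d\tau+\Gh(t,\xh)\int_{t/2}^{\infty}g(\tau)d\tau=:A(t)+B(t)+C(t).
\end{align*}
Every summand carrying a Gaussian factorizes in $(\xh,x_3)$, so its $L^p_x$-norm splits into a product of an $L^p(\mathbb{R}^2_{\xh})$ and an $L^p(\mathbb{R}_{x_3})$ norm. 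Throughout I use $\|f(\tau)\|_{\Lh^1\Lv^p}\leqslant CA^2(1+\tau)^{-3/2}$ (hence also $\|g(\tau)\|_{L^p(\mathbb{R}_{x_3})}\leqslant CA^2(1+\tau)^{-3/2}$), which follows by interpolation from the bounds established in the proof of Lemma~\ref{lemm:Duha_est_1} and is integrable in $\tau$.

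The terms $B$ and $C$ are common to both parts and each is $O(t^{-(1-1/p)-1/2})$. For $C$, factorization and Lemma~\ref{lemm:Linear_est_1}(1) give $\|C(t)\|_{L^p}\leqslant Ct^{-(1-1/p)}\int_{t/2}^{\infty}\|g(\tau)\|_{L^p(\mathbb{R}_{x_3})}d\tau\leqslant CA^2t^{-(1-1/p)-1/2}$. For $B$, the contraction $\|e^{(t-\tau)\Deltah}\cdot\|_{L^p}\leqslant\|\cdot\|_{L^p}$ avoids the kernel singularity at $\tau=t$; interpolating the $L^1$ and $L^{\infty}$ bounds of $\partial_3(u_3\uh)$ to $\|\partial_3(u_3\uh)(\tau)\|_{L^p}\leqslant CA^2(1+\tau)^{-(5/2-1/p)}$ yields $\|B(t)\|_{L^p}\leqslant CA^2\int_{t/2}^t(1+\tau)^{-(5/2-1/p)}d\tau\leqslant CA^2t^{-(1-1/p)-1/2}$, uniformly in $p$. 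I then split $A$ once more into the main piece $-\int_0^{t/2}[e^{(t-\tau)\Deltah}f(\tau)-\Gh(t-\tau,\xh)g(\tau)]d\tau$ and a Gaussian-difference piece $-\int_0^{t/2}[\Gh(t-\tau,\xh)-\Gh(t,\xh)]g(\tau)d\tau$; the latter is harmless because $\partial_s\Gh=\Deltah\Gh$ and $t-\tau\geqslant t/2$ give $\|\Gh(t-\tau)-\Gh(t)\|_{L^p(\mathbb{R}^2)}\leqslant C\tau t^{-(1-1/p)-1}$, so that, paired with $\|g(\tau)\|_{L^p}$, it contributes at most $CA^2t^{-(1-1/p)-1/2}$.

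Only the main piece of $A$ distinguishes the two parts. For (\ref{Duha_expa_1-1}) I would apply Lemma~\ref{lemm:Linear_est_1}(2) with $m=0$: for each fixed $\tau$, the quantity $\varepsilon(s,\tau):=s^{1-1/p}\|e^{s\Deltah}f(\tau)-\Gh(s)g(\tau)\|_{L^p}$ tends to $0$ as $s\to\infty$, since $f(\tau)\in L^1(\mathbb{R}^2_{\xh};(L^1\cap L^{\infty})(\mathbb{R}_{x_3}))$ by \textbf{(A1)}. Using $t-\tau\geqslant t/2$ and Minkowski, $t^{1-1/p}$ times the norm of the main piece is bounded by $C\int_0^{t/2}\varepsilon(t-\tau,\tau)d\tau$; Lemma~\ref{lemm:Linear_est_1}(1) dominates $\varepsilon(t-\tau,\tau)\leqslant C\|f(\tau)\|_{\Lh^1\Lv^p}\leqslant CA^2(1+\tau)^{-3/2}$, a $t$-independent integrable function, while $\varepsilon(t-\tau,\tau)\to0$ pointwise, so dominated convergence delivers (\ref{Duha_expa_1-1}). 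For (\ref{Duha_expa_1-2}) I would instead use Lemma~\ref{lemm:Linear_est_1}(3),
\begin{align*}
\left\|e^{(t-\tau)\Deltah}f(\tau)-\Gh(t-\tau,\xh)g(\tau)\right\|_{L^p}\leqslant C(t-\tau)^{-(1-\frac1p)-\frac12}\left\||\xh|\partial_3(u_3\uh)(\tau,x)\right\|_{\Lh^1\Lv^p},
\end{align*}
together with the weighted nonlinear bound $\||\xh|\partial_3(u_3\uh)(\tau,x)\|_{\Lh^1\Lv^p}\leqslant CA(A+B)(1+\tau)^{-1}$. Then $t-\tau\geqslant t/2$ and $\int_0^{t/2}(1+\tau)^{-1}d\tau\leqslant C\log(2+t)$ produce exactly the factor $t^{-(1-1/p)-1/2}\log(2+t)$, while $B$, $C$ and the Gaussian-difference piece were logarithm-free; collecting everything gives (\ref{Duha_expa_1-2}).

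The principal obstacle is the weighted nonlinear estimate $\||\xh|\partial_3(u_3\uh)(\tau)\|_{\Lh^1\Lv^p}\leqslant CA(A+B)(1+\tau)^{-1}$. Writing $\partial_3(u_3\uh)=\partial_3u_3\,\uh+u_3\,\partial_3\uh$, one must place the weight $|\xh|$ on the slowly decaying horizontal factor so that its $(1+\tau)^{1/2}$ cost is compensated by the faster vertical decay, and then assemble anisotropic H\"older estimates in $\Lh^p\Lv^q$ from precisely the right combination of the weighted bounds \textbf{(A4)}, the $\Lh^{\infty}\Lv^1$ decay \textbf{(A3)}, and the $L^{\infty}$ decay \textbf{(A2)}. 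For (\ref{Duha_expa_1-1}) the delicate point is instead verifying the integrable domination that legitimizes the dominated convergence, and the exclusion of $p=1$ in (\ref{Duha_expa_1-2}) reflects the breakdown of this weighted endpoint at $p=1$.
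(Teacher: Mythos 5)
Your decomposition into $A(t)+B(t)+C(t)$, the further split of $A$, and the estimates for $B$, $C$ and the Gaussian time-difference piece are correct and essentially reproduce the paper's own proof (its terms $I_1,\dots,I_4$, with the cosmetic difference that you compare against $\Gh(t-\tau,\xh)g(\tau)$ and invoke Lemma \ref{lemm:Linear_est_1}(2)--(3) as black boxes where the paper redoes the rescaling for its $I_2$); the dominated-convergence argument for (\ref{Duha_expa_1-1}) is legitimately dominated by $CA^2(1+\tau)^{-3/2}$ and goes through. The gap is in the quantitative part (\ref{Duha_expa_1-2}), precisely at the estimate you yourself flag as the principal obstacle.

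The claimed bound $\||\xh|\partial_3(u_3\uh)(\tau)\|_{\Lh^1\Lv^p}\leqslant CA(A+B)(1+\tau)^{-1}$ is not obtainable from {\bf (A1)}--{\bf (A4)} for finite $p$, and it is internally inconsistent with your exclusion of $p=1$: if it held uniformly in $p$, your argument would run verbatim at $p=1$ (the integral $\int_0^{t/2}(1+\tau)^{-1}d\tau\leqslant C\log(2+t)$ is $p$-independent), contradicting the paper's Remarks \ref{rem:Duha_ exp} and \ref{rem:Lh^infLv1-est}, which show $p=1$ needs an additional hypothesis near $t=0$. Concretely, for finite $p$ one must control the vertical $L^1$ component $\||\xh|\partial_3(u_3\uh)(\tau)\|_{L^1}$, and every admissible H\"older pairing there routes one factor through {\bf (A3)}, which costs $\tau^{-1}$ as $\tau\to 0$ ({\bf (A4)} provides weighted bounds only in $\Lv^{\infty}$, so the weight cannot be moved onto a vertically integrable factor). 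The best available bound is therefore
\begin{align*}
	\||\xh|\partial_3(u_3\uh)(\tau)\|_{\Lh^1\Lv^p}
	\leqslant
	CAB\,\tau^{-\frac{1}{p}}(1+\tau)^{-(1-\frac{1}{p})},
\end{align*}
whose origin singularity is integrable exactly when $p>1$ --- this local-in-time obstruction, not a large-time breakdown of a weighted endpoint, is the true reason $p=1$ is excluded. Moreover, even the large-time rate $\tau^{-1}$ of the $L^1$ piece is out of reach from {\bf (A3)} as stated: pairing $\|\partial_3u_3(\tau)\|_{\Lh^{\infty}\Lv^1}\leqslant A\tau^{-1}$ with $\||\xh|\uh(\tau)\|_{\Lh^1\Lv^{\infty}}\leqslant B(1+\tau)^{\frac{1}{2}}$ yields only $\tau^{-\frac{1}{2}}$, whose time integral grows like $t^{\frac{1}{2}}$ and destroys the logarithm. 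The missing idea is the divergence-free upgrade $\partial_3u_3=-\nablah\cdot\uh$, which together with {\bf (A3)} at $|\alphah|=1$ gives $\|\partial_3u_3(\tau)\|_{\Lh^{\infty}\Lv^1}\leqslant CA\tau^{-1}(1+\tau)^{-\frac{1}{2}}$ and hence $\||\xh|\partial_3(u_3\uh)(\tau)\|_{L^1}\leqslant CAB\tau^{-1}$. Once you substitute the corrected estimate, your framework closes as intended, since $\int_0^{t/2}\tau^{-\frac{1}{p}}(1+\tau)^{-(1-\frac{1}{p})}d\tau\leqslant C\log(2+t)$ for $1<p\leqslant\infty$.
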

\begin{proof}
	For the proof of (\ref{Duha_expa_1-1}), we first note that it holds
	\begin{align*}
		&\|\partial_3(u_3\uh)(\tau)\|_{\Lh^1\Lv^p}\\
		&\quad\leqslant
		\|\partial_3u_3(\tau)\|_{L^p}\|\uh(\tau)\|_{\Lh^{p'}\Lv^{\infty}}
		+
		\|u_3(\tau)\|_{\Lh^{p'}\Lv^{\infty}}\|\partial_3\uh(\tau)\|_{L^p}\\
		&\quad\leqslant
		\|\partial_3u_3(\tau)\|_{L^p}\|\uh(\tau)\|_{\Lh^1\Lv^{\infty}}^{1- \frac{1}{p}}\|\uh(\tau)\|_{L^{\infty}}^{\frac{1}{p}}
		+\|u_3(\tau)\|_{\Lh^1\Lv^{\infty}}^{1- \frac{1}{p}}\|u_3(\tau)\|_{L^{\infty}}^{\frac{1}{p}}\|\partial_3\uh(\tau)\|_{L^{p}}\\
		&\quad\leqslant
		CA^2(1+\tau)^{-\frac{3}{2}}
		\in L_{\tau}^1(0,\infty).
	\end{align*}

	Following the idea of \cite[Theorem 4.1]{FM}, let us decompose our target as follows:
	\begin{align*}
		\Nhh_1[u](t,x)
		+
		\Gh(t,\xh)
		\int_0^{\infty}
		\int_{\mathbb{R}^2}\partial_3(u_3\uh)(\tau,\yh,x_3)d\yh d\tau
		=
		-\sum_{m=1}^4I_m(t,x),
	\end{align*}
	where
	\begin{align*}
		I_1(t,x)&=\int_0^{\frac{t}{2}}\int_{\mathbb{R}^2}\left\{\Gh(t-\tau,\xh-\yh)-\Gh(t,\xh-\yh)\right\}\partial_3(u_3\uh)(\tau,\yh,x_3)d\yh d\tau ,\\
		I_2(t,x)&=\int_0^{\frac{t}{2}}\int_{\mathbb{R}^2}\left\{\Gh(t,\xh-\yh)-\Gh(t,\xh)\right\}\partial_3(u_3\uh)(\tau,\yh,x_3)d\yh d\tau ,\\
		I_3(t,x)&=\int_{\frac{t}{2}}^t e^{(t-\tau)\Deltah}\partial_3(u_3\uh)(\tau,x)d\tau,\\
		I_4(t,x)&=-\Gh(t,\xh)\int_{\frac{t}{2}}^{\infty}\int_{\mathbb{R}^2}\partial_3(u_3\uh)(\tau,\yh,x_3)d\yh d\tau.
	\end{align*}
	On the estimate for $I_1(t)$, since
	\begin{align*}
		I_1(t,x)=
		-\int_0^{\frac{t}{2}}
		\int_{\mathbb{R}^2}
		\int_0^1
		\tau(\partial_t\Gh)(t-\theta\tau,\xh-\yh)
		\partial_3(u_3\uh)(\tau,\yh,x_3)d\theta d\yh d\tau,
	\end{align*}
	we have
	\begin{align*}
		\|I_1(t)\|_{L^p}
		&\leqslant
		\int_0^{\frac{t}{2}}
		\int_0^1
		\tau\|(\partial_t\Gh)(t-\theta\tau)\|_{L^p(\mathbb{R}^2)}
		\|\partial_3(u_3\uh)(\tau)\|_{\Lh^1\Lv^p}d\theta d\tau\\
		&\leqslant
		CA^2
		\int_0^{\frac{t}{2}}
		\int_0^1
		\tau(t-\theta\tau)^{-(1- \frac{1}{p})-1}
		(1+\tau)^{-\frac{3}{2}}
		d\theta d\tau\\
		&\leqslant
		CA^2t^{-(1- \frac{1}{p})-1}
		\int_0^{t}
		(1+\tau)^{-\frac{1}{2}}d\tau\\
		&\leqslant
		CA^2t^{-(1- \frac{1}{p})-\frac{1}{2}}.
	\end{align*}
	On the estimate for $I_2(t)$, we see that
	\begin{align*}
		\|I_2(t)\|_{L^p}
		&\leqslant
		t^{-(1- \frac{1}{p})}
		\int_0^{\frac{t}{2}}\int_{\mathbb{R}^2}
		\|\Gh(1,\cdot-t^{-\frac{1}{2}}\yh)-\Gh(1,\cdot)\|_{L^p(\mathbb{R}^2)}
		\|\partial_3(u_3\uh)(\tau,\yh,\cdot)\|_{L^p(\mathbb{R})}d\yh d\tau.
	\end{align*}
	By virture of $\partial_3(u_3\uh)\in L^1(0,\infty;\Lh^1\Lv^p(\mathbb{R}^3))$, the dominated convergence theorem yields that
	$t^{1- \frac{1}{p}}\|I_2(t)\|_{L^p}\to 0$ as $t\to \infty$.

	On the estimate for $I_3(t)$ and $I_4(t)$, we have
	\begin{align*}
		\|I_3(t)\|_{L^p}+\|I_4(t)\|_{L^p}
		&\leqslant
		\int_{\frac{t}{2}}^{t}
		\|\Gh(t-\tau)\|_{L^p(\mathbb{R}^2)}
		\|\partial_3(u_3\uh)(\tau)\|_{\Lh^1\Lv^p}
		d\tau\\
		&\quad+
		\|\Gh(t)\|_{L^p(\mathbb{R}^2)}
		\int_{\frac{t}{2}}^{\infty}
		\|\partial_3(u_3\uh)(\tau)\|_{\Lh^1\Lv^p}
		d\tau\\
		&\leqslant
		CA^2\int_{\frac{t}{2}}^t(t-\tau)^{-(1- \frac{1}{p})}(1+\tau)^{-\frac{3}{2}}d\tau
		+
		CA^2t^{-(1- \frac{1}{p})}
		\int_{\frac{t}{2}}^{\infty}
		\tau^{-\frac{3}{2}}
		d\tau\\
		&\leqslant
		CA^2t^{-(1- \frac{1}{p})-\frac{1}{2}}.
	\end{align*}
	Collecting the estimates for $I_m(t)$ ($m=1,2,3,4$), we obtain (\ref{Duha_expa_1-1}).

	For the proof of (\ref{Duha_expa_1-2}), it suffices to improve the estimate for $I_2(t)$.
	By {\bf (A3)} and $\partial_3 u_3=-\nablah\cdot \uh$, we see that
	\begin{align}\label{I2modfyL1}
		\begin{split}
			\|\partial_3 u_3(\tau)\|_{\Lh^{\infty}\Lv^1}
			&=
			\begin{cases}
				\|\partial_3 u_3(\tau)\|_{\Lh^{\infty}\Lv^1} & (0<\tau<1)\\
				\|\nablah \cdot \uh(\tau)\|_{\Lh^{\infty}\Lv^1} & (\tau \geqslant 1)
			\end{cases}\\
			&\leqslant
			\begin{cases}
				A\tau^{-1} & (0<\tau<1)\\
				A\tau^{-\frac{3}{2}} & (\tau \geqslant 1)
			\end{cases}\\
			&\leqslant
			CA\tau^{-1}(1+\tau)^{-\frac{1}{2}}.
		\end{split}
	\end{align}
	By (\ref{I2modfyL1}), {\bf (A3)} and {\bf (A4)}, we have
	\begin{align*}
		\||\yh|\partial_3(u_3\uh)(\tau)\|_{L^1}
		&\leqslant
		\|\partial_3u_3(\tau)\|_{\Lh^{\infty}\Lv^1}\||\yh|\uh(\tau)\|_{\Lh^1\Lv^{\infty}}
		+\||\yh|u_3(\tau)\|_{\Lh^1\Lv^{\infty}}\|\partial_3\uh(\tau)\|_{\Lh^{\infty}\Lv^1}\\
		&\leqslant
		CAB\tau^{-1}
	\end{align*}
	and
	\begin{align*}
		\||\yh|\partial_3(u_3\uh)(\tau)\|_{\Lh^1\Lv^{\infty}}
		&\leqslant
		\|\partial_3u_3(\tau)\|_{L^{\infty}}\||\yh|\uh(\tau)\|_{\Lh^1\Lv^{\infty}}
		+
		\||\yh|u_3(\tau)\|_{\Lh^1\Lv^{\infty}}\|\uh(\tau)\|_{L^{\infty}}\\
		&\leqslant
		CAB(1+\tau)^{-1}.
	\end{align*}
	Thus, we get
	\begin{align}\label{I2modfyLh1Lvp}
		\begin{split}
			\||\yh|\partial_3(u_3\uh)(\tau)\|_{\Lh^1\Lv^p}
			&\leqslant
			\||\yh|\partial_3(u_3\uh)(\tau)\|_{L^1}^{\frac{1}{p}}
			\||\yh|\partial_3(u_3\uh)(\tau)\|_{\Lh^1\Lv^{\infty}}^{1- \frac{1}{p}}\\
			&\leqslant
			CAB\tau^{- \frac{1}{p}}(1+\tau)^{-(1- \frac{1}{p})}.
		\end{split}
	\end{align}
	The mean value theorem yields
	\begin{align*}
		I_2(t,x)
		=-\int_0^{\frac{t}{2}}\int_{\mathbb{R}^2}\int_0^1(\nablah\Gh)(t,\xh-\theta\yh)\cdot\yh\partial_3(u_3\uh)(\tau,\yh,x_3)d\theta d\yh d\tau.
	\end{align*}
	Thus, we have by (\ref{I2modfyLh1Lvp})
	\begin{align*}
		\|I_2(t)\|_{L^p}
		&\leqslant
		\int_0^{\frac{t}{2}}\int_{\mathbb{R}^2}\int_0^1\|(\nablah\Gh)(t,\cdot-\theta\yh)\|_{L^p(\mathbb{R}^2)}\||\yh|\partial_3(u_3\uh)(\tau,\yh,\cdot)\|_{L^p(\mathbb{R})}d\theta d\yh d\tau\\
		&\leqslant
		C
		t^{-(1- \frac{1}{p})-\frac{1}{2}}
		\int_0^{\frac{t}{2}}\||\yh|\partial_3(u_3\uh)(\tau)\|_{\Lh^1\Lv^p} d\tau\\
		&\leqslant
		CAB
		t^{-(1- \frac{1}{p})-\frac{1}{2}}
		\int_0^{\frac{t}{2}}
		\tau^{- \frac{1}{p}}(1+\tau)^{-(1- \frac{1}{p})}d\tau\\
		&\leqslant
		CAB
		t^{-(1- \frac{1}{p})-\frac{1}{2}}
		\log(2+t)
	\end{align*}
	for $1<p\leqslant \infty$.
	This completes the proof.
\end{proof}
\begin{rem}\label{rem:Duha_ exp}
	From the above proof, we see that
	if the assumption {\bf (A3)} is modified and $\|\partial_3u(t)\|_{\Lh^{\infty}\Lv^1}$ is assumed to be bounded around $t=0$,
	then we obtain the estimate (\ref{Duha_expa_1-2}) even for the case $p=1$.
\end{rem}
\begin{lemm}\label{lemm:Duha_expa_2}
	Let $u$ satisfy {\bf (A1)} and {\bf (A2)} for some $s\geqslant 3$, $A>0$ and $T=\infty$.
	Then, for $1\leqslant p\leqslant \infty$, it holds
	\begin{align*}
			\begin{split}
				\lim_{t\to \infty}
				t^{(1- \frac{1}{p})+\frac{1}{2}}
				&\left\|
				\Nvv_1[u](t,x)-
				\nablah\Gh(t,\xh)\cdot
				\int_0^{\infty}
				\int_{\mathbb{R}^2}(u_3\uh)(\tau,\yh,x_3)d\yh d\tau
				\right\|_{L^p}
				=0.
			\end{split}
	\end{align*}
\end{lemm}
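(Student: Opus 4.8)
The plan is to follow the proof of Lemma~\ref{lemm:Duha_expa_1}, now with the horizontal gradient falling onto the Gaussian kernel rather than the vertical derivative. First I would rewrite the Duhamel term, moving the divergence in $\nablah\cdot$ onto $\Gh$ by integration by parts in the horizontal variables, as
\begin{align*}
	\Nvv_1[u](t,x)
	=\int_0^t\int_{\mathbb{R}^2}\nablah\Gh(t-\tau,\xh-\yh)\cdot(u_3\uh)(\tau,\yh,x_3)\,d\yh\,d\tau.
\end{align*}
The key preliminary estimate is the time-integrability of the nonlinearity. Splitting by H\"older as $\|(u_3\uh)(\tau)\|_{\Lh^1\Lv^p}\leqslant\|\uh(\tau)\|_{L^p}\|u_3(\tau)\|_{\Lh^{p'}\Lv^{\infty}}$ and interpolating $\|u_3\|_{\Lh^{p'}\Lv^{\infty}}$ between $\Lh^1\Lv^{\infty}$ and $L^{\infty}$, Remark~\ref{rem:assumption} gives $\|u_3(\tau)\|_{\Lh^{p'}\Lv^{\infty}}\leqslant CA(1+\tau)^{-\frac12-\frac1p}$, whence
\begin{align*}
	\|(u_3\uh)(\tau)\|_{\Lh^1\Lv^p}\leqslant CA^2(1+\tau)^{-\frac32}\in L^1_{\tau}(0,\infty)
\end{align*}
for every $1\leqslant p\leqslant\infty$; in particular the spatial integral defining the leading term converges absolutely.

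Next I would decompose the difference into four pieces mirroring $I_1,\dots,I_4$ in Lemma~\ref{lemm:Duha_expa_1}, writing $\Nvv_1[u](t,x)-\nablah\Gh(t,\xh)\cdot\int_0^{\infty}\int_{\mathbb{R}^2}(u_3\uh)(\tau,\yh,x_3)\,d\yh\,d\tau=\sum_{m=1}^4 J_m(t,x)$ with
\begin{align*}
	J_1(t,x)&=\int_0^{\frac{t}{2}}\int_{\mathbb{R}^2}\left\{\nablah\Gh(t-\tau,\xh-\yh)-\nablah\Gh(t,\xh-\yh)\right\}\cdot(u_3\uh)(\tau,\yh,x_3)\,d\yh\,d\tau,\\
	J_2(t,x)&=\int_0^{\frac{t}{2}}\int_{\mathbb{R}^2}\left\{\nablah\Gh(t,\xh-\yh)-\nablah\Gh(t,\xh)\right\}\cdot(u_3\uh)(\tau,\yh,x_3)\,d\yh\,d\tau,\\
	J_3(t,x)&=\int_{\frac{t}{2}}^t e^{(t-\tau)\Deltah}\nablah\cdot(u_3\uh)(\tau,x)\,d\tau,\\
	J_4(t,x)&=-\nablah\Gh(t,\xh)\cdot\int_{\frac{t}{2}}^{\infty}\int_{\mathbb{R}^2}(u_3\uh)(\tau,\yh,x_3)\,d\yh\,d\tau.
\end{align*}

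I would then show that $J_1,J_3,J_4$ are all $O(t^{-(1-\frac1p)-1})$, i.e. a full half-power faster than the claimed rate, so that they contribute nothing to the limit. For $J_1$ the mean value theorem in the time slot produces the factor $\tau\|\partial_t\nablah\Gh(t-\theta\tau)\|_{L^p}$; using $\partial_t\nablah\Gh=\nablah\Deltah\Gh$ and Lemma~\ref{lemm:Linear_est_1}(1) this is $\leqslant C\tau\,t^{-(1-\frac1p)-\frac32}$ on $\tau\leqslant t/2$, and multiplying by $(1+\tau)^{-3/2}$ and integrating yields $t^{-(1-\frac1p)-1}$. For $J_3$, Young's inequality with $\|\nablah\Gh(t-\tau)\|_{L^1}\leqslant C(t-\tau)^{-\frac12}$ and the bound $\|(u_3\uh)(\tau)\|_{L^p}\leqslant CA^2(1+\tau)^{-(1-\frac1p)-\frac32}$ (from $\|u_3\|_{L^\infty}\|\uh\|_{L^p}$) again gives $t^{-(1-\frac1p)-1}$; keeping the kernel in $L^1$ rather than $L^p$ is what makes the time integral converge for all $p$, including $p\geqslant 2$. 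For $J_4$ one simply uses $\|\nablah\Gh(t)\|_{L^p}\leqslant Ct^{-(1-\frac1p)-\frac12}$ together with $\int_{t/2}^{\infty}(1+\tau)^{-3/2}\,d\tau\leqslant Ct^{-\frac12}$.

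Finally, $J_2$ is the only term that is merely $o(t^{-(1-\frac1p)-\frac12})$ and is the main obstacle. Rescaling $\nablah\Gh(t,\xh)=t^{-\frac32}(\nablah\Gh)(1,\xh/\sqrt t)$ and substituting $\xh=\sqrt t\,z$ extracts precisely the factor $t^{-(1-\frac1p)-\frac12}$ and leaves
\begin{align*}
	t^{(1-\frac1p)+\frac12}\|J_2(t)\|_{L^p}
	\leqslant
	\int_0^{\frac{t}{2}}\int_{\mathbb{R}^2}\left\|(\nablah\Gh)(1,\cdot-\tfrac{\yh}{\sqrt t})-(\nablah\Gh)(1,\cdot)\right\|_{L^p(\mathbb{R}^2)}\|(u_3\uh)(\tau,\yh,\cdot)\|_{L^p(\mathbb{R})}\,d\yh\,d\tau.
\end{align*}
The kernel difference tends to $0$ pointwise in $\yh$ as $t\to\infty$ and is dominated by $2\|(\nablah\Gh)(1)\|_{L^p}$, while $\|(u_3\uh)(\tau,\yh,\cdot)\|_{L^p(\mathbb{R})}$ is integrable over $(0,\infty)\times\mathbb{R}^2$ by the preliminary estimate; hence the dominated convergence theorem forces the right-hand side to vanish. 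Collecting the four estimates proves the claim. The delicate point is exactly this dominated-convergence step for $J_2$, in which the growing domain $[0,t/2]$ must be controlled by the fixed integrable majorant supplied by the $(1+\tau)^{-3/2}$ decay.
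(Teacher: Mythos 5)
Your proof is correct and takes precisely the route the paper intends: the paper omits the proof of Lemma \ref{lemm:Duha_expa_2} as ``similar to that of (\ref{Duha_expa_1-1})'', and your $J_1$--$J_4$ decomposition --- with the preliminary bound $\|(u_3\uh)(\tau)\|_{\Lh^1\Lv^p}\leqslant CA^2(1+\tau)^{-\frac32}$, the faster $O(t^{-(1-\frac1p)-1})$ rates for $J_1,J_3,J_4$, and the rescaling plus dominated convergence argument for $J_2$ --- is exactly that adaptation with the horizontal gradient now falling on the Gaussian. All the auxiliary steps check out, including the interpolation giving $\|u_3(\tau)\|_{\Lh^{p'}\Lv^{\infty}}\leqslant CA(1+\tau)^{-\frac12-\frac1p}$ and your correct observation that in $J_3$ the kernel $\nablah\Gh(t-\tau)$ must be placed in $L^1(\mathbb{R}^2)$ so the time integral converges for $p\geqslant 2$.
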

We omit the proof of Lemma \ref{lemm:Duha_expa_2}
since it is similar to that of (\ref{Duha_expa_1-1}).
\section{Proof of Theorem \ref{thm:main_1}}\label{s5}
In this section, we prove Theorem \ref{thm:main_1}.
First of all, we recall the global existence result for (\ref{eq:ANS_1}):
\begin{prop}[\cite{JWY}]\label{prop:existence}
	Let $s\in \mathbb{N}$ satisfy $s\geqslant 2$.
	Then, there exists a positive constant $\delta_0=\delta_0(s)$ such that
	for every $u_0\in H^s(\mathbb{R}^3)$ satisfying $\nabla \cdot u_0=0$ and $\|u_0\|_{H^s}\leqslant \delta_0$,
	(\ref{eq:ANS_1}) possesses a unique solution $u\in C([0,\infty);H^s(\mathbb{R}^3))$.
	Moreover, it holds
	\begin{align*}
		\|u(t)\|_{H^{\sigma}}^2+\int_0^t\|\nablah u(\tau)\|_{H^{\sigma}}^2 d\tau
		\leqslant 2\|u_0\|_{H^{\sigma}}^2, \qquad
		\sigma=0,1,...,s
	\end{align*}
	for all $t\geqslant 0$.
\end{prop}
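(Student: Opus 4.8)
The plan is to prove Proposition~\ref{prop:existence} by the classical energy method combined with a continuation argument, the whole difficulty being that only the horizontal dissipation $\Deltah$ is available. First I would construct a local-in-time solution by a standard scheme (Friedrichs mollification of the equation, or a Galerkin/iteration approximation), obtaining approximate solutions that are smooth in time with values in $H^s$; uniqueness would follow at the end from an energy estimate on the difference of two solutions in $L^2$, where the missing vertical viscosity is again compensated by the divergence-free structure. The heart of the matter is to derive a priori bounds on the approximations that are \emph{uniform} in the approximation parameter and global in time, so that the local solution extends to $[0,\infty)$ and satisfies the stated inequality.

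For the a priori estimates I would fix $\sigma\in\{0,1,\dots,s\}$, apply $\partial^\beta$ for each multi-index $\beta$ with $|\beta|\le\sigma$ to (\ref{eq:ANS_1}), and take the $L^2$ inner product with $\partial^\beta u$. The pressure contribution $\int \nabla\partial^\beta p\cdot\partial^\beta u\,dx$ vanishes after integrating by parts because $\nabla\cdot u=0$; the viscous term produces the good quantity $\|\nablah\partial^\beta u\|_{L^2}^2$; and in the convective term the ``diagonal'' part $\int (u\cdot\nabla)\partial^\beta u\cdot\partial^\beta u\,dx$ also vanishes by the divergence-free condition. Summing over $|\beta|\le\sigma$ leaves only the commutator $[\partial^\beta,u\cdot\nabla]u$ to estimate, and I would aim for an inequality of the form
\[
\frac{d}{dt}\|u(t)\|_{H^\sigma}^2 + 2\|\nablah u(t)\|_{H^\sigma}^2 \le C\|u(t)\|_{H^s}\,\|\nablah u(t)\|_{H^\sigma}^2 .
\]

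The main obstacle is controlling these commutator terms with only horizontal dissipation, since a naive bound would require $\|\partial_3 u\|_{H^\sigma}$, which is not controlled. The key device, as in \cite{CDGG,Iftimie,Paicu}, is the divergence-free identity $\partial_3 u_3=-\nablah\cdot\uh$: whenever a vertical derivative falls on the vertical component $u_3$, it is rewritten as a horizontal derivative of $\uh$, which \emph{is} controlled by the dissipation. Combined with anisotropic Sobolev/Gagliardo--Nirenberg inequalities that distribute one derivative in each coordinate direction (and with the embedding $H^s\hookrightarrow L^\infty$ for $s\ge2$, which is exactly where the hypothesis $s\ge2$ enters), every term in the commutator can be arranged so that each genuine vertical derivative is either transferred onto $u_3$ and converted via the divergence-free relation, or absorbed into an $L^\infty$-type factor estimated by $\|u\|_{H^s}$, while at most the horizontal-gradient norms $\|\nablah u\|_{H^\sigma}$ carry the remaining derivatives. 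This is what yields the structure on the right-hand side above, with the nonlinear loss measured by $\|u\|_{H^s}$ times the good dissipative norm.

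Finally I would close the argument by smallness and bootstrap. Choosing $\delta_0=\delta_0(s)$ so small that $C\delta_0\le\tfrac12$, a standard continuity argument shows that the quantity $\|u(t)\|_{H^s}$ stays below the threshold where the nonlinear term is dominated by the dissipation; then the displayed inequality becomes $\frac{d}{dt}\|u\|_{H^\sigma}^2+\|\nablah u\|_{H^\sigma}^2\le0$, and integrating in time gives $\|u(t)\|_{H^\sigma}^2+\int_0^t\|\nablah u(\tau)\|_{H^\sigma}^2\,d\tau\le\|u_0\|_{H^\sigma}^2\le2\|u_0\|_{H^\sigma}^2$ for every $\sigma=0,\dots,s$, the factor $2$ providing the room needed to run the bootstrap. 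Because this uniform bound rules out finite-time blow-up, the local solution continues to all $t\ge0$, and passing to the limit in the approximation scheme (using the dissipation to gain compactness in the horizontal variables and the uniform $H^s$ bound elsewhere) produces the desired global solution $u\in C([0,\infty);H^s(\mathbb{R}^3))$ satisfying the stated energy inequality. I expect the anisotropic commutator estimates of the third paragraph to be the genuinely delicate step; everything else is routine.
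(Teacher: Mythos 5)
The paper does not prove this proposition at all: it is recalled verbatim from \cite{JWY} at the start of Section \ref{s5}, so the only comparison available is with the cited source. Your sketch --- $H^\sigma$ energy estimates in which every vertical derivative landing on $u_3$ is converted via $\partial_3 u_3=-\nablah\cdot\uh$, anisotropic Gagliardo--Nirenberg triple-product bounds distributing one directional derivative per factor so that the commutator is dominated by $C\|u\|_{H^s}\|\nablah u\|_{H^\sigma}^2$, and a small-data continuity/bootstrap argument reducing the inequality to $\frac{d}{dt}\|u(t)\|_{H^\sigma}^2+\|\nablah u(t)\|_{H^\sigma}^2\leqslant 0$ --- is exactly the mechanism of the energy-method proof in \cite{JWY}, so your proposal is correct and takes essentially the same route as the paper's (external) source.
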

We are ready to present the proof of Theorem \ref{thm:main_1}.
\begin{proof}[Proof of Theorem \ref{thm:main_1}]
	We split the proof into three steps.
	At the first step, we prove the solution belongs to $C([0,\infty);X^s(\mathbb{R}^3))$ if the $H^s$-norm of $u_0\in X^s(\mathbb{R}^3)$ is sufficiently small.
	In the second step, we show the $L^p$-decay estimate (\ref{main_1:uhL^p_decay}) and (\ref{main_1:u3L^p_decay}) by virture of Lemmas \ref{lemm:Linear_est_1}, \ref{lemm:Linear_est_2} and \ref{lemm:Duha_est_1}.
	Finally, the third step gives us the proof of asymptotic expansion by combining Lemmas \ref{lemm:Linear_est_1}, \ref{lemm:Linear_est_2}, \ref{lemm:Duha_est_1} and \ref{lemm:Duha_expa_1}.

	\noindent
	{\bf Step 1. (Global solutions in $X^s(\mathbb{R}^3)$)}

	Let $s\in \mathbb{N}$ satisfy $s\geqslant 5$
	and let $u_0\in X^s(\mathbb{R}^3)$ satisfy $\nabla\cdot u_0=0$ and $\|u_0\|_{H^s}\leqslant \delta_0$.
	Let $u$ be the global solution to (\ref{eq:ANS_1}), which is constructed by Proposition \ref{prop:existence}.
	From the integral equation (\ref{eq:Int-2}), we have
	\begin{align*}
		&\|u(t)\|_{L^1_{\xh}(W^{1,1}\cap W^{1,\infty})_{x_3}}\\
		&
		\quad
		\leqslant
		\|e^{t\Deltah}u_0\|_{L^1_{\xh}(W^{1,1}\cap W^{1,\infty})_{x_3}}
		+C\sum_{m=1}^5\|\Nhh_m[u](t)\|_{L^1_{\xh}W^{2,1}_{x_3}}
		+C\sum_{m=1}^3\|\Nvv_m[u](t)\|_{L^1_{\xh}W^{2,1}_{x_3}}\\
		&\quad
		\leqslant
		C\|u_0\|_{X^s}
		+C\int_0^t\|\partial_3(u_3\uh)(\tau)\|_{L^1_{\xh}W^{2,1}_{x_3}}d\tau
		+C\sum_{k,l=1}^3\int_0^t(t-\tau)^{-\frac{1}{2}}\|(u_ku_l)(\tau)\|_{L^1_{\xh}W^{2,1}_{x_3}}d\tau\\
		&\quad
		\leqslant
		C\|u_0\|_{X^s}
		+C\int_0^t\|u(\tau)\|_{H^3}^2d\tau
		+C\int_0^t(t-\tau)^{-\frac{1}{2}}\|u(\tau)\|_{H^2}^2d\tau\\
		&\quad\leqslant
		C\|u_0\|_{X^s}
		+Ct\|u_0\|_{H^3}^2
		+Ct^{\frac{1}{2}}\|u_0\|_{H^2}^2\\
		&\quad\leqslant
		C(1+t)\|u_0\|_{X^s}
	\end{align*}
	for all $t>0$.
	Here, we have used the abbreviation
	\begin{align*}
		&\|\cdot\|_{L^1_{\xh}(W^{1,1}\cap W^{1,\infty})_{x_3}}=\|\cdot\|_{L^1(\mathbb{R}^2_{\xh};(W^{1,1}\cap W^{1,\infty})(\mathbb{R}_{x_3}))},\\
		&\|\cdot\|_{L^1_{\xh}W^{2,1}_{x_3}}=\|\cdot\|_{L^1(\mathbb{R}^2_{\xh};W^{2,1}(\mathbb{R}_{x_3}))}.
	\end{align*}
	It is easy to check that $t\mapsto \|u(t)\|_{L^1_{\xh}(W^{1,1}\cap W^{1,\infty})_{x_3}}$ is continuous.
	Therefore, we have $u\in C([0,\infty);X^s(\mathbb{R}^3))$.

	\noindent
	{\bf Step 2. ($L^p$ decay estimates)}

	In this step, we show (\ref{main_1:uhL^p_decay}) and (\ref{main_1:u3L^p_decay}).
	The idea of the proof is the continuous argument used in \cite{JWY}.
	By Lemmas \ref{lemm:Linear_est_1} and \ref{lemm:Linear_est_2}, there exists an absolute positive constant $C_1$ such that
	\begin{align*}
		&\|\nabla^{\alpha}e^{t\Deltah}u_{0,\h}(t)\|_{L^p}
		\leqslant C_1t^{-(1-\frac{1}{p})-\frac{|\alphah|}{2}}\|u_0\|_{X^s},\\
		&\|\nablah^{\alphah}e^{t\Deltah}u_{0,3}(t)\|_{L^p}
		\leqslant C_1t^{-\frac{3}{2}(1-\frac{1}{p})-\frac{|\alphah|}{2}}\|u_0\|_{X^s},
	\end{align*}
	for all $t>0$ and $\alpha\in (\mathbb{N}\cup\{0\})^3$ with $|\alpha|\leqslant 1$.
	Let us consider
	\begin{align*}
		T:=
		\sup
		\left\{
			T'>0\ ;\
			\begin{array}{l}
				\|\nabla^{\alpha}\uh(t)\|_{L^p}
				\leqslant 2C_1t^{-(1-\frac{1}{p})-\frac{|\alphah|}{2}}\|u_0\|_{X^s},\\
				\|\nablah^{\alphah}u_3(t)\|_{L^p}
				\leqslant 2C_1t^{-\frac{3}{2}(1-\frac{1}{p})-\frac{|\alphah|}{2}}\|u_0\|_{X^s}
			\end{array}
			{\rm\ for\ }1\leqslant p \leqslant \infty{\rm \ and\ }0< t\leqslant T'
		\right\}.
	\end{align*}
	and we shall prove $T=\infty$.
	By the similar calculation as in the proof of $u(t)\in L^1(\mathbb{R}^2_{\xh};(W^{1,1}\cap W^{1,\infty})(\mathbb{R}_{x_3}))$,
	it is easy to see that $T>0$.
	Suppose by contradiction that $T<\infty$.
	Then, by Proposition \ref{prop:integral_equation} and Lemma \ref{lemm:Duha_est_1} with $A=2C_1\|u_0\|_{X^s}$,
	we see that
	\begin{align*}
		&\|\nabla^{\alpha}\uh(t)\|_{L^p}
		\leqslant C_1t^{-(1-\frac{1}{p})-\frac{|\alphah|}{2}}\|u_0\|_{X^s}+C_2(1+t)^{-(1-\frac{1}{p})-\frac{|\alphah|}{2}}\|u_0\|_{X^s}^2,\\
		&\|\nablah^{\alphah}u_3(t)\|_{L^p}
		\leqslant C_1t^{-\frac{3}{2}(1-\frac{1}{p})-\frac{|\alphah|}{2}}\|u_0\|_{X^s}+C_2(1+t)^{-\frac{3}{2}(1-\frac{1}{p})-\frac{|\alphah|}{2}}\|u_0\|_{X^s}^2
	\end{align*}
	for some $C_2>0$ and all $0<t<T$.
	Therefore, if the initial data satisfies
	\begin{align*}
		\|u_0\|_{X^s}\leqslant \delta_1:=\min\{\delta_0, C_1/(2C_2)\},
	\end{align*}
	then we have
	\begin{align*}
		&\|\nabla^{\alpha}\uh(t)\|_{L^p}
		\leqslant \frac{3}{2}C_1t^{-(1-\frac{1}{p})-\frac{|\alphah|}{2}}\|u_0\|_{X^s},\\
		&\|\nablah^{\alphah}u_3(t)\|_{L^p}
		\leqslant \frac{3}{2}C_1t^{-\frac{3}{2}(1-\frac{1}{p})-\frac{|\alphah|}{2}}\|u_0\|_{X^s}
	\end{align*}
	for $0<t<T$.
	This contradicts to the definition of $T$ and we complete the proof of (\ref{main_1:uhL^p_decay}) and (\ref{main_1:u3L^p_decay}).

	\noindent
	{\bf Step 3. (Asymptotic expansions)}

	By Lemmas \ref{lemm:Linear_est_1}, \ref{lemm:Duha_est_1} and \ref{lemm:Duha_expa_1}, we have
	\begin{align*}
		t^{1- \frac{1}{p}}
		&\left\|
			\uh(t,x)
			-
			\Gh(t,\xh)\int_{\mathbb{R}^2}u_{0,\h}(\yh,x_3)d\yh\right.\\
		&\left.
			\qquad \qquad+
			\Gh(t,\xh)\int_0^{\infty}\int_{\mathbb{R}^2}\partial_3(u_3\uh)(\tau,\yh,x_3)d\yh d\tau
		\right\|_{L^p_x}\\
		&\leqslant
		t^{1- \frac{1}{p}}
		\left\|e^{t\Deltah}u_{0,\h}(x)-\Gh(t,\xh)\int_{\mathbb{R}^2}u_{0,\h}(\yh,x_3)d\yh \right\|_{L^p_x}\\
		&\quad+
		t^{1- \frac{1}{p}}
		\left\|\Nhh_1[u](t)+\Gh(t,\xh)\int_0^{\infty}\int_{\mathbb{R}^2}\partial_3(u_3\uh)(\tau,\yh,x_3)d\yh d\tau\right\|_{L^p}
		+
		\sum_{m=2}^5t^{1- \frac{1}{p}}\|\Nhh_m[u](t)\|_{L^p}\\
		&\leqslant
		t^{1- \frac{1}{p}}
		\left\|e^{t\Deltah}u_{0,\h}(x)-\Gh(t,\xh)\int_{\mathbb{R}^2}u_{0,\h}(\yh,x_3)d\yh \right\|_{L^p_x}\\
		&\quad+
		t^{1- \frac{1}{p}}
		\left\|\Nhh_1[u](t)+\Gh(t,\xh)\int_0^{\infty}\int_{\mathbb{R}^2}\partial_3(u_3\uh)(\tau,\yh,x_3)d\yh d\tau\right\|_{L^p}
		+C\|u_0\|_{X^s}t^{-\frac{1}{2}}\log(2+t)\\
		&\to 0
		\qquad {\rm as\ }t\to \infty,
	\end{align*}
	which gives (\ref{main_1:asymp_uh}).
	For the proof of (\ref{main_1:asymp_u3}), we see by Lemmas \ref{lemm:Linear_est_2} and \ref{lemm:Duha_est_1} that
	\begin{align*}
		t^{\frac{3}{2}(1- \frac{1}{p})}
		&\left\|
			u_3(t,x)
			-
			\Gh(t,\xh)\int_{\mathbb{R}^2}u_{0,3}(\yh,x_3)d\yh
		\right\|_{L^p_x}\\
		&\leqslant
		t^{\frac{3}{2}(1- \frac{1}{p})}
		\left\|e^{t\Deltah}u_{0,3}(x)
		-
		\Gh(t,\xh)\int_{\mathbb{R}^2}u_{0,3}(\yh,x_3)d\yh
		\right\|_{L^p}
		+\sum_{m=1}^3t^{\frac{3}{2}(1- \frac{1}{p})}\|\Nvv_m[u](t)\|_{L^p}\\
		&\leqslant
		C\mathcal{R}_{p,0}(t)^{\frac{1}{p}}\|u_0\|_{L^1}^{1- \frac{1}{p}}
		+C\|u_0\|_{X^s}t^{-\frac{1}{2p}}+C\|u_0\|_{X^s}t^{-\frac{1}{2}+\frac{3}{8}(1- \frac{1}{p})}\log(2+t).
	\end{align*}
	Then, the right hand side converges to 0 as $t\to \infty$ if $p<\infty$.
	Thus, we complete the proof.
\end{proof}
\section{Additional Estimates for the Solution}\label{s6}
The aim of this section is to prove that the solution $u$ of (\ref{eq:ANS_1}) satisfies the assumptions {\bf (A3)} and {\bf (A4)} for $T=\infty$, $A=C\|u_0\|_{X^s}$ and $B=C\|u_0\|_{\widetilde{X^s}}$.
\begin{prop}\label{prop:Lh^infLv1-est}
	Let $s\in \mathbb{N}$ satisfy $s\geqslant 9$.
	Then, there exist a positive constant $\delta_3=\delta_3(s)\leqslant \delta_1(s)$ and an absolute positive constant $C$
	such that
	for solutions $u$ to (\ref{eq:ANS_1}) with the initial data $u_0\in X^s(\mathbb{R}^3)$ satisfying $\nabla\cdot u_0=0$ and $\|u_0\|_{X^s}\leqslant \delta_3$, it holds
	\begin{align*}
		\|\nabla^{\alpha}u(t)\|_{\Lh^{\infty}\Lv^1}
		\leqslant
		Ct^{-1- \frac{|\alphah|}{2}}\|u_0\|_{X^s}
	\end{align*}
	for all $t>0$ and $\alpha\in (\mathbb{N}\cup\{0\})^3$ with $|\alpha|\leqslant 1$.
\end{prop}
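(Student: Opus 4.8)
The plan is to read the claimed bound termwise off the integral equation (\ref{eq:Int-2}), after first recording that the solution satisfies the structural hypotheses \textbf{(A1)} and \textbf{(A2)}. Concretely, I would fix $\delta_3\leqslant\delta_1$ so that Theorem \ref{thm:main_1} furnishes a unique global solution $u\in C([0,\infty);X^s(\mathbb{R}^3))$ obeying the decay estimates (\ref{main_1:uhL^p_decay})--(\ref{main_1:u3L^p_decay}). Since $s\geqslant 9\geqslant 5$, these estimates together with the energy inequality of Proposition \ref{prop:existence} show that $u$ verifies \textbf{(A1)} and \textbf{(A2)} with $A=C\|u_0\|_{X^s}$ and $T=\infty$. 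This is precisely the input needed to apply Lemma \ref{lemm:Duha_est_4}, so no bootstrap or continuity argument is required: the estimate follows by summing the linear and nonlinear contributions in (\ref{eq:Int-2}).

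For the nonlinear contributions, Lemma \ref{lemm:Duha_est_4} gives directly $\|\nabla^{\alpha}\Nhh_m[u](t)\|_{\Lh^{\infty}\Lv^1}+\|\nabla^{\alpha}\Nvv_n[u](t)\|_{\Lh^{\infty}\Lv^1}\leqslant CA^2 t^{-1-|\alphah|/2}$, which is $\leqslant C\|u_0\|_{X^s}t^{-1-|\alphah|/2}$ once $\delta_3$ is chosen small enough that $A=C\|u_0\|_{X^s}\leqslant 1$, so that $A^2\leqslant A$. For the horizontal linear part, Lemma \ref{lemm:Linear_est_1}(1) with $(p_1,p_2,q)=(1,\infty,1)$ yields $\|\nabla^{\alpha}e^{t\Deltah}u_{0,\h}\|_{\Lh^{\infty}\Lv^1}\leqslant Ct^{-1-|\alphah|/2}\|\partial_3^{\alpha_3}u_{0,\h}\|_{L^1}\leqslant Ct^{-1-|\alphah|/2}\|u_0\|_{X^s}$, where for $\alpha_3=1$ I use that $u_0\in L^1(\mathbb{R}^2_{\xh};W^{1,1}(\mathbb{R}_{x_3}))$.

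The only place that genuinely uses the equation is the vertical linear part $e^{t\Deltah}u_{0,3}$, where a naive heat estimate would lose half a power of $t$. Here I would invoke the enhanced dissipation (\ref{enhaced decay}) of Lemma \ref{lemm:Linear_est_2}(1), which exploits $\partial_3u_{0,3}=-\nablah\cdot u_{0,\h}$ and gives $\|e^{t\Deltah}u_{0,3}\|_{\Lh^{\infty}\Lv^1}\leqslant Ct^{-1}\|u_0\|_{L^1}$, settling the case $\alpha=0$. For $\alpha_3=1$ one writes $\partial_3e^{t\Deltah}u_{0,3}=-e^{t\Deltah}\nablah\cdot u_{0,\h}$ and applies Lemma \ref{lemm:Linear_est_1}(1) to obtain the stronger rate $t^{-3/2}$; for a horizontal derivative one splits the semigroup as $\nablah e^{t\Deltah}u_{0,3}=\nablah e^{\frac{t}{2}\Deltah}\bigl(e^{\frac{t}{2}\Deltah}u_{0,3}\bigr)$ and combines the gradient smoothing of Lemma \ref{lemm:Linear_est_1}(1) (a factor $t^{-1/2}$) with (\ref{enhaced decay}) (a factor $t^{-1}$) to again reach $t^{-3/2}\|u_0\|_{L^1}$, which exceeds the required $t^{-1-|\alphah|/2}$.

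I expect no serious obstacle: the hard analysis is already contained in Lemmas \ref{lemm:Duha_est_4} and \ref{lemm:Linear_est_2}. The main subtlety is to remember to use the enhanced dissipation rather than a plain heat bound on the vertical linear term, and to absorb the quadratic nonlinear constant by taking $\delta_3$ small. Summing the four bounds above yields $\|\nabla^{\alpha}u(t)\|_{\Lh^{\infty}\Lv^1}\leqslant C\|u_0\|_{X^s}t^{-1-|\alphah|/2}$ for all $t>0$ and $\alpha$ with $|\alpha|\leqslant 1$, as claimed.
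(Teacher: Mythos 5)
There is a genuine gap, and it sits exactly where you assert that ``no bootstrap or continuity argument is required.'' You apply Lemma \ref{lemm:Duha_est_4} taking its printed hypotheses ({\bf (A1)} and {\bf (A2)}) at face value, but the proof of that lemma consumes assumption {\bf (A3)} --- that is, the very estimate $\|\nabla^{\alpha}u(\tau)\|_{\Lh^{\infty}\Lv^1}\leqslant A\tau^{-1-\frac{|\alphah|}{2}}$ that Proposition \ref{prop:Lh^infLv1-est} is trying to establish. Concretely, in the proof of (\ref{Duha_est_4-1}) the integral over $[t/2,t]$ is bounded using $\|\uh(\tau)\|_{\Lh^{\infty}\Lv^1},\,\|\partial_3\uh(\tau)\|_{\Lh^{\infty}\Lv^1}\leqslant A\tau^{-1}$, the estimate (\ref{pat3(u_ku_l)_Lh^infLv1}) reads $\|\nabla^{\alpha}(u_ku_l)(\tau)\|_{\Lh^{\infty}\Lv^1}\leqslant 2\|u(\tau)\|_{\Lh^{\infty}\Lv^1}\|\nabla^{\alpha}u(\tau)\|_{L^{\infty}}\leqslant CA^2\tau^{-1}(1+\tau)^{-1-\frac{|\alphah|}{2}}$, and (\ref{del3^2(u_3uh)_Lh^infLv1}) likewise invokes $\Lh^{\infty}\Lv^1$ decay of $u$ itself. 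None of these mixed-norm bounds follows from {\bf (A1)}--{\bf (A2)}: Theorem \ref{thm:main_1} supplies isotropic $L^p$ decay and $H^s$ boundedness only, and you cannot sidestep the issue by putting the full $\Lh^1\to\Lh^{\infty}$ smoothing on the kernel near the endpoint, since $\int_{t/2}^{t}(t-\tau)^{-1-\frac{|\alphah|}{2}}\,d\tau=\infty$. So your ``sum the terms directly'' argument is circular in substance; the omission of {\bf (A3)} in the statement of Lemma \ref{lemm:Duha_est_4} is a slip in the paper, as its own one-line proof of the proposition confirms: it establishes the linear bound via Lemma \ref{lemm:Linear_est_1} and then closes with ``the continuous argument via Lemma \ref{lemm:Duha_est_4},'' i.e.\ precisely the bootstrap of Step~2 of the proof of Theorem \ref{thm:main_1} (posit the bound with constant $2C_1\|u_0\|_{X^s}$ on a maximal time interval, feed it into Lemma \ref{lemm:Duha_est_4} as {\bf (A3)}, and improve the constant to $\tfrac{3}{2}C_1\|u_0\|_{X^s}$ for $\delta_3$ small). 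Your smallness bookkeeping ($A^2\leqslant A$) is the right spirit, but without the continuity argument the proof does not close.

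A secondary, harmless point: your claim that the vertical linear term is ``the only place that genuinely uses the equation'' and that a naive heat estimate ``would lose half a power of $t$'' is mistaken in this norm. The target rate $t^{-1-\frac{|\alphah|}{2}}$ is exactly the plain $\Lh^1\to\Lh^{\infty}$ horizontal heat rate, so Lemma \ref{lemm:Linear_est_1} (1) handles all three components of $e^{t\Deltah}u_0$ at once, which is what the paper does; note also that (\ref{enhaced decay}) with $q=1$ carries no enhancement at all, since the gain $\frac{1}{2}(1-\frac{1}{q})$ vanishes. Your detour through Lemma \ref{lemm:Linear_est_2} still produces correct (indeed stronger) bounds, so this does not affect validity --- the missing bootstrap above is the real defect.
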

\begin{proof}
	By Lemma \ref{lemm:Linear_est_1}, there exists an absolute positive constant $C$ such that
	\begin{align*}
		\|\nabla^{\alpha}e^{t\Deltah}u_0\|_{\Lh^{\infty}\Lv^1}
		\leqslant
		Ct^{-1- \frac{|\alphah|}{2}}\|u_0\|_{X^s}
	\end{align*}
	for all $t>0$ and $\alpha\in (\mathbb{N}\cup\{0\})^3$ with $|\alpha|\leqslant 1$.
	Thus, the continuous argument via Lemma \ref{lemm:Duha_est_4} completes the proof.
	We omit the detail since the argument is similar to the second step of the proof of Theorem \ref{thm:main_1}.
\end{proof}
\begin{rem}\label{rem:Lh^infLv1-est}
	If we assume $\nabla^{\alpha} u_0\in\Lh^{\infty}\Lv^1(\mathbb{R}^3)$ for $|\alpha|\leqslant1$,
	then by the slight modification of the proof of Lemma \ref{lemm:Duha_est_4}
	and Proposition \ref{prop:Lh^infLv1-est}, we obtain
	\begin{align*}
		\|\nabla^{\alpha}u(t)\|_{\Lh^{\infty}\Lv^1}
		\leqslant
		C(\|u_0\|_{X^s}+\|\nabla^{\alpha}u_0\|_{\Lh^{\infty}\Lv^1})(1+t)^{-1-\frac{|\alphah|}{2}}
	\end{align*}
	for $t\geqslant 0$ and $|\alpha|\leqslant 1$.
	Hence by Remark \ref{rem:Duha_ exp}, (\ref{Duha_expa_1-2}) holds even when $p=1$, if $\nabla^{\alpha} u_0\in\Lh^{\infty}\Lv^1(\mathbb{R}^3)$ for $|\alpha|\leqslant1$.
\end{rem}

\begin{prop}\label{prop:weight}
	For $s\in \mathbb{N}$ with $s\geqslant 9$,
	there exist an absolute positive constant $C$ and a positive constant $\delta_2=\delta_2(s)$ with $\delta_2(s)\leqslant \delta_1(s)$ such that
	for any $u_0\in X^s(\mathbb{R}^3)$ satisfying $\nabla\cdot u_0=0$,
	$|\xh|u_0(x)\in L^1(\mathbb{R}^2_{\xh};(L^1 \cap L^{\infty})(\mathbb{R}_{x_3}))$
	and $\|u_0\|_{X^s}\leqslant \delta_2$, the solution $u$ of (\ref{eq:ANS_1}) satisfies
	\begin{align*}
		&\||\xh|\uh(t,x)\|_{\Lh^1\Lv^{\infty}}
		\leqslant
		C(1+t)^{\frac{1}{2}}\|u_0\|_{\widetilde{X^s}},\\
		&\||\xh|u_3(t,x)\|_{\Lh^1\Lv^{\infty}}
		\leqslant
		C\|u_0\|_{\widetilde{X^s}}
	\end{align*}
	for all $t\geqslant0$, where
	$\|u_0\|_{\widetilde{X^s}}:=\|u_0\|_{X^s}+\||\xh|u_0(x)\|_{L^1(\mathbb{R}^2_{\xh};(L^1 \cap L^{\infty})(\mathbb{R}_{x_3}))}$.
\end{prop}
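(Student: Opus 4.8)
The plan is to run a continuity (bootstrap) argument in the weighted quantities, exactly parallel to Step~2 of the proof of Theorem \ref{thm:main_1} and to Proposition \ref{prop:Lh^infLv1-est}, starting from the integral equation (\ref{eq:Int-2}). Since $\|u_0\|_{X^s}\leqslant \delta_2\leqslant \delta_1$, Theorem \ref{thm:main_1} guarantees that $u$ satisfies {\bf (A1)} and {\bf (A2)} with $A=C\|u_0\|_{X^s}$, and (shrinking $\delta_2$ if necessary so that $\delta_2\leqslant\delta_3$) Proposition \ref{prop:Lh^infLv1-est} yields {\bf (A3)} with the same $A$; hence the full list of unweighted decay estimates collected in Remark \ref{rem:assumption} is at our disposal. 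I would then set
\[
T:=\sup\Big\{T'>0 : \||\xh|\uh(t)\|_{\Lh^1\Lv^{\infty}}\leqslant 2C_*(1+t)^{\frac12}\|u_0\|_{\widetilde{X^s}},\ \||\xh|u_3(t)\|_{\Lh^1\Lv^{\infty}}\leqslant 2C_*\|u_0\|_{\widetilde{X^s}}\ \text{for }0<t\leqslant T'\Big\},
\]
with $C_*$ the constant produced by the linear estimates below, and aim to prove $T=\infty$ by improving $2C_*$ to $\tfrac32 C_*$ once $\delta_2$ is taken small (positivity $T>0$ follows as in Step~1 of Theorem \ref{thm:main_1}).

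The linear parts are immediate and already reveal the mechanism behind the two different growth rates. For the horizontal component I would apply the weighted heat estimate in Lemma \ref{lemm:Linear_est_1}(1) with $p=1$, $q=\infty$, $\alpha=0$, $p_1=p_2=1$, which gives $\||\xh|e^{t\Deltah}u_{0,\h}\|_{\Lh^1\Lv^{\infty}}\leqslant Ct^{\frac12}\|u_{0,\h}\|_{\Lh^1\Lv^\infty}+C\||\xh|u_{0,\h}\|_{\Lh^1\Lv^{\infty}}\leqslant C(1+t)^{\frac12}\|u_0\|_{\widetilde{X^s}}$. For the vertical component the essential point is the enhanced bound (\ref{W-enhaced decay}) of Lemma \ref{lemm:Linear_est_2}(1), which already delivers the $t$-independent estimate $\||\xh|e^{t\Deltah}u_{0,3}\|_{\Lh^1\Lv^{\infty}}\leqslant C\|(1+|\xh|)u_0\|_{L^1}\leqslant C\|u_0\|_{\widetilde{X^s}}$; this is exactly why the vertical weighted norm may be kept bounded while the horizontal one is allowed to grow like $t^{1/2}$.

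For the nonlinear terms I would push the weight inside the convolutions via $|\xh|\leqslant|\xh-\yh|+|\yh|$, so that each Duhamel term splits into a piece where $|\xh|$ hits the kernel and a piece where it hits the nonlinearity. The former is handled by the weighted heat estimate of Lemma \ref{lemm:Linear_est_1}(1) and the weighted $K$-estimate (the second display of Lemma \ref{lemm:Linear_est_3}), each of which costs an extra factor $t^{\frac12}$; the latter reduces to time integrals of weighted nonlinearity norms such as $\||\yh|(u_3\uh)(\tau)\|$ and $\||\yh|(u_iu_j)(\tau)\|$ in suitable anisotropic norms. Writing $|\yh|(u_iu_j)=(|\yh|u_i)\,u_j$, I would bound one factor by the bootstrap hypothesis (that is, by {\bf (A4)} with $B=2C_*\|u_0\|_{\widetilde{X^s}}$ for $\tau<T$) and the other by the fast unweighted decay of Remark \ref{rem:assumption} and {\bf (A3)}, so that each weighted nonlinearity norm is controlled by $C(A+B)A$ times an explicit power of $(1+\tau)$. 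Splitting the $\tau$-integral at $t/2$ and estimating precisely as in Lemmas \ref{lemm:Duha_est_1} and \ref{lemm:Duha_est_4} then closes the horizontal bound at growth $(1+t)^{\frac12}$ and the vertical bound at the bounded rate, the extra small factor $\|u_0\|_{\widetilde{X^s}}$ being absorbed by $\delta_2$.

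The main obstacle I anticipate is the vertical estimate, which must remain uniformly bounded in $t$: there the weight costs $t^{\frac12}$ on the kernel, and this must be paid for by the horizontal derivatives carried by every $\Nvv_m$ together with the fast $\tau^{-1}(1+\tau)^{-\cdots}$ decay of the weighted nonlinearities. One has to verify that the resulting time integrals actually converge rather than producing a growing power or an unbounded logarithm. In the borderline contributions (those analogous to the $k=1,2$ cases in Lemma \ref{lemm:Duha_est_1}) the integral produces at worst a factor $\log(2+t)$ against a genuinely decaying power, so the product stays bounded; checking this balance of exponents term by term is the delicate part of the argument, while the horizontal estimate, having $t^{1/2}$ of room, is comparatively routine.
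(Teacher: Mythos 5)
Your quantitative bookkeeping is sound and, with $v=u$, essentially reproduces the estimates the paper proves in Lemmas \ref{lemm:W_Duha_est_1}--\ref{lemm:W_Duha_est_3}: splitting $|\xh|\leqslant|\xh-\yh|+|\yh|$, paying a factor $(t-\tau)^{\frac12}$ when the weight falls on the kernel (via Lemma \ref{lemm:Linear_est_1}(1) and the second part of Lemma \ref{lemm:Linear_est_3}), and pairing the bootstrap hypothesis with {\bf (A1)}--{\bf (A3)} when it falls on the nonlinearity does close at the rates $(1+t)^{\frac12}$ for $\uh$ and $O(1)$ for $u_3$; your identification of (\ref{W-enhaced decay}) as the reason the vertical linear part stays bounded is exactly the paper's mechanism, and since the proposition claims no weighted bound on $\partial_3 u$, no vertical derivative of the bootstrapped quantity is ever required, so the $\partial_3$-derivative loss does not enter your estimates.

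The genuine gap is at the launch of the continuity argument. You bootstrap $\||\xh|u(t)\|_{\Lh^1\Lv^{\infty}}$ for the solution $u$ furnished by Theorem \ref{thm:main_1}, but that solution is only known to lie in $C([0,\infty);X^s(\mathbb{R}^3))$, and $X^s$ carries no horizontal weight: a priori $\||\xh|u(t)\|_{\Lh^1\Lv^{\infty}}$ could be infinite for every $t>0$, so the set defining your $T$ may be empty, and ``$T>0$ as in Step 1'' does not apply --- Step 1 propagates only unweighted norms. Any attempt to prove finiteness directly from the integral equation (\ref{eq:Int-2}) feeds the same, possibly infinite, weighted norm back on the right-hand side and is therefore circular. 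This is precisely why the paper does not bootstrap on $u$ at all: it introduces the auxiliary integral equation (\ref{eq:linear_integral_eq}), which is \emph{linear} in the unknown $v$ with every dangerous $\partial_3$ placed on the known solution $u$, constructs a solution in the weighted space $Y$ by the contraction mapping principle, and only then identifies the fixed point with $u$ through uniqueness in the unweighted class $L^{\infty}(0,\infty;\Lh^1\Lv^{\infty}(\mathbb{R}^3))$ --- so the weighted bound for $u$ is inherited from the fixed point rather than continued from $t=0$. (This is also the only role of the $\partial_3$- and unweighted components of the $Y$-norm, which your scheme correctly never needs.) Your route can be repaired, for instance by running your estimates with the truncated weight $\min\{|\xh|,R\}$, which is finite for each $R$ because $u(t)\in\Lh^1\Lv^{\infty}(\mathbb{R}^3)$ by {\bf (A1)}, obtaining bounds uniform in $R$, and letting $R\to\infty$ by monotone convergence; but as written the bootstrap cannot start.
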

To prove Proposition \ref{prop:weight}, let us consider the following integral equation:
\begin{align}\label{eq:linear_integral_eq}
	\begin{cases}
		\vh(t)
		=
		e^{t\Deltah}u_{0,\h}
		+\displaystyle\sum_{m=1}^5 \Nhh_m[u,v](t),\\
		v_3(t)
		=
		e^{t\Deltah}u_{0,3}
		+\displaystyle\sum_{m=1}^3\Nvv_m[u,v](t),
	\end{cases}
\end{align}
where
$v=(\vh(t,x),v_3(t,x))$ is the unknown vector field
and
$u$ is the solution to (\ref{eq:ANS_1}) with the initial data $u_0$ and the Duhamel terms are defined by
\begin{align*}
	\Nhh_1[u,v](t):&=-\int_0^te^{(t-\tau)\Deltah}((\partial_{3}u_3)\vh+v_3\partial_3\uh)(\tau)d\tau,\\
	\Nhh_2[u,v](t):&=-\int_0^te^{(t-\tau)\Deltah}\nablah \cdot (\uh\otimes\vh)(\tau)d\tau,\\
	\Nhh_3[u,v](t):&=\int_0^t\nablah e^{(t-\tau)\Deltah}(u_3v_3)(\tau)d\tau,\\
	\Nhh_4[u,v](t):&=-\sum_{k,l=1}^2\int_0^t\nablah\partial_{k}\partial_{l}K(t-\tau)*(u_kv_l)(\tau)d\tau,\\
	\Nhh_5[u,v](t):&=2\sum_{k=1}^2\int_0^t\nablah \partial_k(-\Deltah)^{\frac{1}{2}}\widetilde{K}(t-\tau)*(u_3v_k)(\tau)d\tau+\int_0^t\nablah \Deltah K(t-\tau)*(u_3v_3)(\tau)d\tau
\end{align*}
and
\begin{align*}
	\Nvv_1[u,v](t):&=\int_0^te^{(t-\tau)\Deltah}\nablah\cdot(\vh u_3)(\tau)d\tau,\\
	\Nvv_2[u,v](t):&=\sum_{k,l=1}^2\int_0^t(-\Deltah)^{\frac{1}{2}}\partial_{k}\partial_{l}\widetilde{K}(t-\tau)*(u_kv_l)(\tau)d\tau,\\
	\Nvv_3[u,v](t):&=2\sum_{k=1}^2\int_0^t\partial_k\Deltah K(t-\tau)*(u_3v_k)(\tau)d\tau+\int_0^t(-\Deltah)^{\frac{3}{2}}\widetilde{K}(t-\tau)*(u_3v_3)(\tau)d\tau.
\end{align*}

We note that $u$ is a solution to (\ref{eq:linear_integral_eq}).
Therefore, once we prove the existence of solutions to (\ref{eq:linear_integral_eq})
in an appropriate function space equipped with some weighted norm and the uniqueness in $L^{\infty}(0,\infty;\Lh^1\Lv^{\infty}(\mathbb{R}^3))$, then we obtain the desired weighted estimates for $u$.
Thanks to the definition of the Duhamel term $\Nhh_1[u,v]$,
the well-posedness problem for (\ref{eq:linear_integral_eq}) is much easier than that for (\ref{eq:ANS_1})
since no $\partial_3$-derivative loss occurs and we can construct the solution of (\ref{eq:linear_integral_eq}) simply by the contraction mapping approach.

We define the solution space $Y$ by
\begin{align*}
	Y:&=\left\{v=(v_1,v_2,v_3)\in L^{\infty}(0,\infty;\Lh^1\Lv^{\infty}(\mathbb{R}^3))\ ;\ \|v\|_{Y}<\infty \right\},\\
	\|v\|_Y
	:&=
	\sup_{t\geqslant 0}(1+t)^{-\frac{1}{2}}\||\xh|\vh(t,x)\|_{\Lh^1\Lv^{\infty}}
	+\sup_{t\geqslant 0}\||\xh|v_3(t,x)\|_{\Lh^1\Lv^{\infty}}\\
	&\quad
	+\sup_{t\geqslant 0}\|\vh(t)\|_{L^1(\mathbb{R}^2_{\xh};W^{1,\infty}(\mathbb{R}_{x_3}))}
	+\sup_{t\geqslant 0}(1+t)^{\frac{1}{2}}\|v_3(t)\|_{L^1(\mathbb{R}^2_{\xh};W^{1,\infty}(\mathbb{R}_{x_3}))}.
\end{align*}
We prepare some lemmas for the proof of Proposition \ref{prop:weight}.
\begin{lemm}\label{lemm:W_Duha_est_1}
	Let $s\in \mathbb{N}$ satisfy $s\geqslant 7$.
	Let $u$ be the solution to (\ref{eq:ANS_1}) with the data $u_0\in X^s(\mathbb{R}^3)$ satisfying $\nabla\cdot u_0=0$ and $\|u_0\|_{X^s}\leqslant \delta_1$.
	Then, there exists an absolute positive constant $C$ such that
	\begin{align}
		\||\xh|\Nhh_1[u,v](t,x)\|_{\Lh^1\Lv^{\infty}}
		&\leqslant
		C(1+t)^{\frac{1}{2}}\|u_0\|_{X^s}\|v\|_{Y},\label{W_Duha_est_1-1}\\
		\|\Nhh_1[u,v](t)\|_{\Lh^1\Lv^{\infty}}
		&\leqslant
		C\|u_0\|_{X^s}\|v\|_{L^{\infty}(0,\infty;\Lh^1\Lv^{\infty})},\label{W_Duha_est_1-3}\\
		\|\partial_3\Nhh_1[u,v](t)\|_{\Lh^1\Lv^{\infty}}
		&\leqslant
		C\|u_0\|_{X^s}\|v\|_{Y}\label{W_Duha_est_1-5}
	\end{align}
	for all $v\in Y$ and $t\geqslant 0$.
\end{lemm}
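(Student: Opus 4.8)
The plan is to reduce all three bounds to estimates on the bilinear integrand through the Duhamel representation
\[
	\Nhh_1[u,v](t)=-\int_0^te^{(t-\tau)\Deltah}g(\tau)\,d\tau,
	\qquad
	g(\tau):=(\partial_3u_3)\vh+v_3\partial_3\uh ,
\]
applying to it the relevant version of the heat estimate in Lemma \ref{lemm:Linear_est_1}(1). The decisive structural feature of this linearization is that in $g$ the operator $\partial_3$ always falls on a factor of $u$ and never on $v$; thus $v$ enters only through $\vh,v_3,\partial_3\vh,\partial_3v_3$, controlled by $\|v\|_Y$ (or, in \eqref{W_Duha_est_1-3}, merely by $\sup_{\tau}\|v(\tau)\|_{\Lh^1\Lv^\infty}$), while all vertical derivatives land on $u$ and are handled by the decay rates of Theorem \ref{thm:main_1} and Remark \ref{rem:assumption} (applied with $A=C\|u_0\|_{X^s}$, legitimate since $\|u_0\|_{X^s}\le\delta_1$). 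I shall use repeatedly that $\|\partial_3u_3(\tau)\|_{L^\infty}\le CA(1+\tau)^{-3/2}$ and $\|\partial_3\uh(\tau)\|_{L^\infty}\le CA(1+\tau)^{-1}$, together with the $Y$-controls $\|\vh\|_{\Lh^1\Lv^\infty}\le\|v\|_Y$, $\|v_3\|_{\Lh^1\Lv^\infty}\le(1+\tau)^{-1/2}\|v\|_Y$, $\||\xh|\vh\|_{\Lh^1\Lv^\infty}\le(1+\tau)^{1/2}\|v\|_Y$ and $\||\xh|v_3\|_{\Lh^1\Lv^\infty}\le\|v\|_Y$.

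For \eqref{W_Duha_est_1-1} I invoke the weighted estimate in Lemma \ref{lemm:Linear_est_1}(1) with $p_1=p_2=1$, $q=\infty$ and $\alpha=0$, namely
\[
	\||\xh|e^{(t-\tau)\Deltah}g(\tau)\|_{\Lh^1\Lv^\infty}
	\le C(t-\tau)^{1/2}\|g(\tau)\|_{\Lh^1\Lv^\infty}
	+C\||\xh|g(\tau)\|_{\Lh^1\Lv^\infty}.
\]
Distributing the norms across the two summands of $g$ and inserting the bounds above gives $\|g(\tau)\|_{\Lh^1\Lv^\infty}\le CA(1+\tau)^{-3/2}\|v\|_Y$ and $\||\xh|g(\tau)\|_{\Lh^1\Lv^\infty}\le CA(1+\tau)^{-1}\|v\|_Y$. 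Integrating in $\tau$ and splitting $\int_0^t=\int_0^{t/2}+\int_{t/2}^t$, the first summand yields $\int_0^t(t-\tau)^{1/2}(1+\tau)^{-3/2}\,d\tau\le C(1+t)^{1/2}$ and the second yields $\int_0^t(1+\tau)^{-1}\,d\tau=\log(1+t)\le C(1+t)^{1/2}$, which is exactly \eqref{W_Duha_est_1-1}.

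For \eqref{W_Duha_est_1-3} I use the non-smoothing bound $\|e^{(t-\tau)\Deltah}g\|_{\Lh^1\Lv^\infty}\le C\|g\|_{\Lh^1\Lv^\infty}$; the contribution of $(\partial_3u_3)\vh$ is $\le CA(1+\tau)^{-3/2}\sup_\tau\|\vh\|_{\Lh^1\Lv^\infty}$, hence integrable in $\tau$ and uniformly bounded. For \eqref{W_Duha_est_1-5} I differentiate under the integral and expand, by the product rule,
\[
	\partial_3g=(\partial_3^2u_3)\vh+(\partial_3u_3)\partial_3\vh+(\partial_3v_3)\partial_3\uh+v_3\partial_3^2\uh .
\]
The two cross terms decay like $(1+\tau)^{-3/2}$ once $\|\partial_3\vh\|_{\Lh^1\Lv^\infty}\le\|v\|_Y$ and $\|\partial_3v_3\|_{\Lh^1\Lv^\infty}\le(1+\tau)^{-1/2}\|v\|_Y$ are used, and the two terms carrying $\partial_3^2u$ are bounded by a Gagliardo--Nirenberg interpolation between the decaying $\|\partial_3u(\tau)\|_{L^\infty}$ and the uniformly bounded $\|u(\tau)\|_{H^s}\le A$, precisely as in Lemma \ref{lemm:Duha_est_1}; for $s\ge7$ this gives decay strictly faster than $(1+\tau)^{-1}$, so $\partial_3g$ is integrable in $\tau$ and \eqref{W_Duha_est_1-5} follows uniformly in $t$.

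The main obstacle is the term $v_3\partial_3\uh$ in \eqref{W_Duha_est_1-3}. Under the sole hypothesis $v\in L^\infty(0,\infty;\Lh^1\Lv^\infty)$ the factor $\|v_3(\tau)\|_{\Lh^1\Lv^\infty}$ carries no time decay, so the best available rate for $\|v_3\partial_3\uh\|_{\Lh^1\Lv^\infty}$ is the borderline $(1+\tau)^{-1}$ coming from $\|\partial_3\uh\|_{L^\infty}$, and—since $e^{(t-\tau)\Deltah}$ affords no time decay into the target space $\Lh^1\Lv^\infty$—a direct integration would only give $\log(1+t)$. I therefore split $\int_0^t=\int_0^{t/2}+\int_{t/2}^t$: on $[t/2,t]$ one has $\int_{t/2}^t(1+\tau)^{-1}\,d\tau\le\log 2$, uniformly bounded, while on $[0,t/2]$, where $t-\tau\ge t/2$ is large but the semigroup still yields no gain into $\Lh^1\Lv^\infty$, the borderline rate must be upgraded to an integrable one by extracting finer information on $\partial_3\uh$—for instance through the $\Lh^\infty\Lv^1$ decay of Proposition \ref{prop:Lh^infLv1-est} or an additional commutation against the heat kernel. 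Securing this uniform-in-time closure of the borderline term is the technical heart of the lemma, and it is exactly what allows the contraction for \eqref{eq:linear_integral_eq} to close on the whole half-line $t\ge0$.
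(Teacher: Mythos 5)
Your treatments of (\ref{W_Duha_est_1-1}) and (\ref{W_Duha_est_1-5}) are correct and essentially coincide with the paper's proof: the weighted heat bound of Lemma \ref{lemm:Linear_est_1} (1) (equivalently, Young's inequality against $\||\xh|\Gh(t-\tau)\|_{L^1}\leqslant C(t-\tau)^{1/2}$ and $\|\Gh(t-\tau)\|_{L^1}=1$), the pairing of $\|\partial_3u_3(\tau)\|_{L^{\infty}}\leqslant CA(1+\tau)^{-3/2}$, $\|\partial_3\uh(\tau)\|_{L^{\infty}}\leqslant CA(1+\tau)^{-1}$ with the $Y$-weights on $\vh,v_3$, and, for (\ref{W_Duha_est_1-5}), the Gagliardo--Nirenberg step $\|\partial_3^2f\|_{L^{\infty}}\leqslant C\|f\|_{H^7}^{2/7}\|\partial_3f\|_{L^{\infty}}^{5/7}$ (the paper's (\ref{GW}), which is where $s\geqslant 7$ enters), yielding the integrable rates $(1+\tau)^{-15/14}$ and $(1+\tau)^{-1/2-5/7}$ for the two $\partial_3^2u$ terms.

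The genuine gap is (\ref{W_Duha_est_1-3}): you leave the borderline term $v_3\partial_3\uh$ unclosed, and neither rescue you float can work. Proposition \ref{prop:Lh^infLv1-est} controls $\partial_3\uh$ in $\Lh^{\infty}\Lv^1$, but H\"older then places $v_3\partial_3\uh$ in $\Lh^1\Lv^1$, and $e^{(t-\tau)\Deltah}$ acts only in the horizontal variables, so it cannot upgrade $\Lv^1$ to $\Lv^{\infty}$; moreover the $\Lh^{\infty}\Lv^1$ rate is again $\tau^{-1}$ (and singular at $\tau=0$), so there is no gain in decay either, and a vague ``commutation against the heat kernel'' does not produce horizontal smoothing into $\Lh^1$. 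What the paper actually does is put the decay on $v_3$ rather than on $\partial_3\uh$: by the definition of $Y$ one has $\|v_3(\tau)\|_{\Lh^1\Lv^{\infty}}\leqslant(1+\tau)^{-1/2}\|v\|_Y$, so the integrand becomes $(1+\tau)^{-3/2}$ --- precisely the bound you yourself invoke in (\ref{W_Duha_est_1-1}) and (\ref{W_Duha_est_1-5}) --- and the paper's displayed conclusion for this estimate is in fact $C\|u_0\|_{X^s}\|v\|_Y$. Your observation that with only $\|v\|_{L^{\infty}(0,\infty;\Lh^1\Lv^{\infty})}$ the direct estimate degenerates to $\int_0^t(1+\tau)^{-1}d\tau\sim\log(1+t)$ is accurate, and it flags a real mismatch between the printed statement (whose weak-norm form is what the uniqueness step in the proof of Proposition \ref{prop:weight} invokes; that step could alternatively be salvaged by a Gronwall argument, the map being linear and of Volterra type) and the paper's printed proof; but within your proposal the estimate (\ref{W_Duha_est_1-3}) is not established in any form, even though the $\|v\|_Y$ version was available to you in one line.
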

\begin{proof}
	For the proof of (\ref{W_Duha_est_1-1}), we have by Lemma \ref{lemm:Linear_est_1} (1) that
	\begin{align*}
		&\||\xh|\Nhh_1[u,v](t,x)\|_{\Lh^1\Lv^{\infty}}\\
		&\quad\leqslant
		\int_0^t \||\xh|\Gh(t-\tau,\xh)\|_{L^1(\mathbb{R}^2_{\xh})}
		\|((\partial_{3}u_3)\vh+v_3\partial_3\uh)(\tau)\|_{\Lh^1\Lv^{\infty}}d\tau\\
		&\qquad +
		\int_0^t \|\Gh(t-\tau)\|_{L^1(\mathbb{R}^2)}
		\|((\partial_{3}u_3)|\xh|\vh+|\xh|v_3\partial_3\uh)(\tau)\|_{\Lh^1\Lv^{\infty}}d\tau\\
		&\quad\leqslant
		C\int_0^t(t-\tau)^{\frac{1}{2}}
		\left(\|\partial_{3}u_3(\tau)\|_{L^{\infty}}\|\vh(\tau)\|_{\Lh^1\Lv^{\infty}}
		+\|v_3(\tau)\|_{\Lh^1\Lv^{\infty}}\|\partial_3\uh(\tau)\|_{L^{\infty}}\right)d\tau\\
		&\qquad +
		C\int_0^t
		\left(\|\partial_{3}u_3(\tau)\|_{L^{\infty}}\||\xh|\vh(\tau)\|_{\Lh^1\Lv^{\infty}}
		+\||\xh|v_3(\tau)\|_{\Lh^1\Lv^{\infty}}\|\partial_3\uh(\tau)\|_{L^{\infty}}\right)d\tau\\
		&\quad\leqslant
		C\int_0^t(t-\tau)^{\frac{1}{2}}(1+\tau)^{-\frac{3}{2}}d\tau\|u_0\|_{X^s}\|v\|_Y
		+
		C\int_0^t(1+\tau)^{-1}d\tau\|u_0\|_{X^s}\|v\|_Y\\
		&\quad \leqslant
		C(1+t)^{\frac{1}{2}}\|u_0\|_{X^s}\|v\|_Y.
	\end{align*}
	On the estimate for (\ref{W_Duha_est_1-3}), we have
	\begin{align*}
		\|\Nhh_1[u,v](t)\|_{\Lh^1\Lv^{\infty}}
		&\leqslant
		\int_0^t
		\|\Gh(t-\tau)\|_{L^1(\mathbb{R}^2)}
		\|((\partial_{3}u_3)\vh+v_3\partial_3\uh)(\tau)\|_{\Lh^1\Lv^{\infty}}d\tau\\
		&\leqslant
		C\int_0^t
		\left(\|\partial_{3}u_3(\tau)\|_{L^{\infty}}\|\vh(\tau)\|_{\Lh^1\Lv^{\infty}}
		+\|v_3(\tau)\|_{\Lh^1\Lv^{\infty}}\|\partial_3\uh(\tau)\|_{L^{\infty}}\right)d\tau\\
		&\leqslant
		C\int_0^t(1+\tau)^{-\frac{3}{2}}d\tau\|u_0\|_{X^s}\|v\|_{Y}\\
		&\leqslant
		C\|u_0\|_{X^s}\|v\|_{Y}.
	\end{align*}
	Finally, we prove (\ref{W_Duha_est_1-5}). It follows from (\ref{GW}) that
	\begin{align*}
		&\|\partial_3\Nhh_1[u,v](t)\|_{\Lh^1\Lv^{\infty}}\\
		&\quad\leqslant
		\int_0^t \|\Gh(t-\tau)\|_{L^1(\mathbb{R}^2)}
		\left(\|\partial_{3}^2u_3(\tau)\|_{L^{\infty}}\|\vh(\tau)\|_{\Lh^1\Lv^{\infty}}
		+\|\partial_3u_3(\tau)\|_{L^{\infty}}\|\partial_3\vh(\tau)\|_{\Lh^1\Lv^{\infty}}\right.\\
		&\quad \qquad \qquad \quad\left.
		+\|\partial_3v_3(\tau)\|_{\Lh^1\Lv^{\infty}}\|\partial_3\uh(\tau)\|_{L^{\infty}}
		+\|v_3(\tau)\|_{\Lh^1\Lv^{\infty}}\|\partial_3^2\uh(\tau)\|_{L^{\infty}}\right)d\tau\\
		&\quad\leqslant
		C\int_0^t
		\left(\|u_3(\tau)\|_{H^7}^{\frac{2}{7}}\|\partial_3u_3\|_{L^{\infty}}^{\frac{5}{7}}\|\vh(\tau)\|_{\Lh^1\Lv^{\infty}}
		+\|\partial_3u_3(\tau)\|_{L^{\infty}}\|\partial_3\vh(\tau)\|_{\Lh^1\Lv^{\infty}}\right.\\
		&\qquad \qquad \qquad \left.
		+\|\partial_3v_3(\tau)\|_{\Lh^1\Lv^{\infty}}\|\partial_3\uh(\tau)\|_{L^{\infty}}
		+\|v_3(\tau)\|_{\Lh^1\Lv^{\infty}}\|\uh(\tau)\|_{H^7}^{\frac{2}{7}}\|\partial_3\uh\|_{L^{\infty}}^{\frac{5}{7}}\right)d\tau\\
		&\quad\leqslant
		C\int_0^t(1+\tau)^{-\frac{15}{14}}d\tau\|u_0\|_{X^s}\|v\|_{Y}\\
		&\quad\leqslant
		C\|u_0\|_{X^s}\|v\|_{Y}.
	\end{align*}
	Thus, we complete the proof.
\end{proof}
\begin{lemm}\label{lemm:W_Duha_est_2}
	Let $s\in \mathbb{N}$ with $s\geqslant 5$ and let $u$ be the solution to (\ref{eq:ANS_1}) with the data $u_0\in X^s(\mathbb{R}^3)$ satisfying $\nabla\cdot u_0=0$ and $\|u_0\|_{X^s}\leqslant \delta_1$.
	Then, there exists an absolute positive constant $C$ such that
	\begin{align}
		&\left\||\xh|\int_0^te^{(t-\tau)\Deltah}\nablah(u_kv_l)(\tau,x)d\tau\right\|_{\Lh^1\Lv^{\infty}}
		\leqslant
		\begin{cases}
			C(1+t)^{\frac{1}{2}}\|u_0\|_{X^s}\|v\|_{Y} & (k=1,2)\\
			C\|u_0\|_{X^s}\|v\|_{Y} & (k=3)
		\end{cases}
		,\label{W_Duha_est_2-1}\\
		&\left\|\int_0^te^{(t-\tau)\Deltah}\nablah(u_kv_l)(\tau)d\tau\right\|_{\Lh^1\Lv^{\infty}}
		\leqslant
		\begin{cases}
			C\|u_0\|_{X^s}\|v\|_{L^{\infty}(0,\infty;\Lh^1\Lv^{\infty})} & (k=1,2)\\
			C(1+t)^{-\frac{1}{2}}\|u_0\|_{X^s}\|v\|_{L^{\infty}(0,\infty;\Lh^1\Lv^{\infty})} & (k=3)
		\end{cases}
		,\label{W_Duha_est_2-3}\\
		&\left\|\partial_3 \int_0^te^{(t-\tau)\Deltah}\nablah(u_kv_l)(\tau)d\tau\right\|_{\Lh^1\Lv^{\infty}}
		\leqslant
		\begin{cases}
			C\|u_0\|_{X^s}\|v\|_{Y} & (k=1,2)\\
			C(1+t)^{-\frac{1}{2}}\|u_0\|_{X^s}\|v\|_{Y} & (k=3)
		\end{cases}
		\label{W_Duha_est_2-4}
	\end{align}
	for all $t\geqslant 0$ and $l=1,2,3$.
\end{lemm}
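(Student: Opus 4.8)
The plan is to estimate all three quantities directly in the anisotropic norm $\Lh^1\Lv^{\infty}$, so that, unlike in Lemma \ref{lemm:Duha_est_1}, no interpolation between $L^1$ and $L^{\infty}$ is needed. The two structural inputs are the horizontal heat-kernel bounds of Lemma \ref{lemm:Linear_est_1} (1), which give $\|\nablah\Gh(t-\tau)\|_{L^1(\mathbb{R}^2)}\leqslant C(t-\tau)^{-1/2}$ and $\||\xh|\nablah\Gh(t-\tau)\|_{L^1(\mathbb{R}^2)}\leqslant C$, together with the elementary product inequality $\|fg\|_{\Lh^1\Lv^{\infty}}\leqslant \|f\|_{L^{\infty}}\|g\|_{\Lh^1\Lv^{\infty}}$. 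Throughout I will put the factor $u_k$ (or its $\partial_3$-derivative) in $L^{\infty}$, using the decay collected in Remark \ref{rem:assumption} (valid since $u$ is the solution of Theorem \ref{thm:main_1}, so {\bf (A1)}, {\bf (A2)} hold with $A=C\|u_0\|_{X^s}$), namely $\|\uh(\tau)\|_{L^{\infty}},\|\partial_3\uh(\tau)\|_{L^{\infty}}\leqslant CA(1+\tau)^{-1}$ and $\|u_3(\tau)\|_{L^{\infty}},\|\partial_3u_3(\tau)\|_{L^{\infty}}\leqslant CA(1+\tau)^{-3/2}$; and I will put $v_l$ (or $\partial_3 v_l$, or $|\xh|v_l$) in $\Lh^1\Lv^{\infty}$, controlled by the four pieces of $\|v\|_Y$. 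This is exactly the dichotomy recorded in the statement: the rate is governed by $k$, since $\|u_k\|_{L^{\infty}}$ decays one half-power faster when $k=3$.

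For (\ref{W_Duha_est_2-1}) I would split the weight inside the horizontal convolution as $|\xh|\leqslant|\xh-\yh|+|\yh|$, giving
\begin{align*}
	\||\xh|e^{(t-\tau)\Deltah}\nablah(u_kv_l)(\tau)\|_{\Lh^1\Lv^{\infty}}
	&\leqslant
	\||\xh|\nablah\Gh(t-\tau)\|_{L^1}\|(u_kv_l)(\tau)\|_{\Lh^1\Lv^{\infty}}\\
	&\quad+
	\|\nablah\Gh(t-\tau)\|_{L^1}\||\xh|(u_kv_l)(\tau)\|_{\Lh^1\Lv^{\infty}}.
\end{align*}
Estimating each product by $\|u_k\|_{L^{\infty}}$ times the corresponding ($|\xh|$-weighted) $\Lh^1\Lv^{\infty}$-norm of $v_l$ and inserting the two kernel bounds above, the first term contributes $\int_0^t\|u_k(\tau)\|_{L^{\infty}}\,d\tau\;\|v\|_Y$ and the second $\int_0^t(t-\tau)^{-1/2}\|u_k(\tau)\|_{L^{\infty}}\||\xh|v_l(\tau)\|_{\Lh^1\Lv^{\infty}}\,d\tau$. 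For $k=1,2$, using $\||\xh|\vh\|_{\Lh^1\Lv^{\infty}}\leqslant(1+\tau)^{1/2}\|v\|_Y$, this yields a bound of size $\log(2+t)$, which is $\leqslant C(1+t)^{1/2}$; for $k=3$ the extra half-power of decay in $\|u_3\|_{L^{\infty}}$ makes both time integrals uniformly bounded, giving the stated $C\|u_0\|_{X^s}\|v\|_Y$.

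Estimates (\ref{W_Duha_est_2-3}) and (\ref{W_Duha_est_2-4}) need no weight splitting. For (\ref{W_Duha_est_2-3}) I bound directly by $\int_0^t\|\nablah\Gh(t-\tau)\|_{L^1}\|(u_kv_l)(\tau)\|_{\Lh^1\Lv^{\infty}}d\tau\leqslant CA\|v\|_{L^{\infty}(0,\infty;\Lh^1\Lv^{\infty})}\int_0^t(t-\tau)^{-1/2}\|u_k(\tau)\|_{L^{\infty}}d\tau$; the integral $\int_0^t(t-\tau)^{-1/2}(1+\tau)^{-1}d\tau$ is bounded (case $k=1,2$), whereas $\int_0^t(t-\tau)^{-1/2}(1+\tau)^{-3/2}d\tau\leqslant C(1+t)^{-1/2}$ (case $k=3$), matching the two rates. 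For (\ref{W_Duha_est_2-4}) I first commute $\partial_3$ through the horizontal semigroup and distribute it by the Leibniz rule, $\partial_3(u_kv_l)=(\partial_3u_k)v_l+u_k(\partial_3v_l)$, so that $\partial_3$ always falls on a factor for which a good bound is available: $\partial_3u_k$ through Remark \ref{rem:assumption} and $\partial_3v_l$ through the $L^1(\mathbb{R}^2_{\xh};W^{1,\infty}(\mathbb{R}_{x_3}))$-parts of $\|v\|_Y$. The same two time integrals as in (\ref{W_Duha_est_2-3}) then give the asserted bounds, the $(1+t)^{-1/2}$ gain again appearing exactly when $k=3$.

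None of the steps is genuinely hard; the work is essentially bookkeeping. The conceptual point is the uniform choice ``$u$ in $L^{\infty}$, $v$ in $\Lh^1\Lv^{\infty}$'', which is forced by the fact that the $Y$-norm provides no $L^{\infty}$ control of $v$, together with the correct reading of the index $k$ as determining the decay rate of $\|u_k\|_{L^{\infty}}$. The time integrals are standard Beta-type integrals handled by splitting at $\tau=t/2$; the only mild subtlety is near $\tau=0$ and for small $t$, where one checks that the singular kernel $(t-\tau)^{-1/2}$ is integrable and that all the relevant $L^{\infty}$- and $\Lh^1\Lv^{\infty}$-norms of $u$ and $v$ stay bounded, so that the prefactors $(1+t)^{\pm 1/2}$ cause no trouble.
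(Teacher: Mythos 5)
Your proposal is correct and follows essentially the same route as the paper: the same splitting $|\xh|\leqslant|\xh-\yh|+|\yh|$ combined with the kernel bounds $\||\xh|\nablah\Gh(t-\tau)\|_{L^1}\leqslant C$ and $\|\nablah\Gh(t-\tau)\|_{L^1}\leqslant C(t-\tau)^{-1/2}$, the uniform pairing ``$u_k$ (or $\partial_3 u_k$) in $L^{\infty}$ via Remark \ref{rem:assumption}, $v_l$ (or $\partial_3 v_l$, $|\xh|v_l$) in $\Lh^1\Lv^{\infty}$ via the $Y$-norm,'' the Leibniz rule for (\ref{W_Duha_est_2-4}), and the same Beta-type time integrals distinguishing $k=1,2$ from $k=3$. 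Your intermediate $\log(2+t)$ bound for $k=1,2$ in (\ref{W_Duha_est_2-1}) is in fact slightly sharper than the stated $(1+t)^{1/2}$, which is all that the $Y$-norm contraction requires, exactly as in the paper.
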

\begin{proof}
	On the estimate (\ref{W_Duha_est_2-1}), we see by Lemma \ref{lemm:Linear_est_1} (1) that
	\begin{align*}
		&\left\||\xh|\int_0^te^{(t-\tau)\Deltah}\nablah(u_kv_l)(\tau,x)d\tau\right\|_{\Lh^1\Lv^{\infty}}\\
		&\quad\leqslant
		\int_0^t
		\||\xh|\nablah\Gh(t-\tau,\xh)\|_{L^1(\mathbb{R}^2_{\xh})}
		\|u_k(\tau)\|_{L^{\infty}}\|v_l(\tau)\|_{\Lh^1\Lv^{\infty}}d\tau\\
		&\qquad+
		\int_0^t
		\|\nablah\Gh(t-\tau)\|_{L^1(\mathbb{R}^2)}
		\|u_k(\tau)\|_{L^{\infty}}\||\xh|v_l(\tau)\|_{\Lh^1\Lv^{\infty}}d\tau\\
		&\quad\leqslant
		\begin{cases}
			C\displaystyle\int_0^t
			(1+\tau)^{-1}d\tau\|u_0\|_{X^s}\|v\|_Y & \\
			\quad+
			C\displaystyle\int_0^t
			(t-\tau)^{-\frac{1}{2}}
			(1+\tau)^{-\frac{1}{2}}d\tau\|u_0\|_{X^s}\|v\|_Y & (k=1,2)\\
			C\displaystyle\int_0^t
			(1+\tau)^{-\frac{3}{2}}d\tau\|u_0\|_{X^s}\|v\|_Y & \\
			\quad+
			C\displaystyle\int_0^t
			(t-\tau)^{-\frac{1}{2}}
			(1+\tau)^{-1}d\tau\|u_0\|_{X^s}\|v\|_Y & (k=3)\\
		\end{cases}\\
		&\quad\leqslant
		\begin{cases}
			C(1+t)^{\frac{1}{2}}\|u_0\|_{X^s}\|v\|_{Y} & (k=1,2)\\
			C\|u_0\|_{X^s}\|v\|_{Y} & (k=3)
		\end{cases}.
	\end{align*}
	We next obtain (\ref{W_Duha_est_2-3}) by
	\begin{align*}
		&\left\|\int_0^te^{(t-\tau)\Deltah}\nablah(u_kv_l)(\tau)d\tau\right\|_{\Lh^1\Lv^{\infty}}\\
		&\quad\leqslant
		C\int_0^t(t-\tau)^{-\frac{1}{2}}\|u_k(\tau)\|_{L^{\infty}}\|v_l(\tau)\|_{\Lh^1\Lv^{\infty}}d\tau\\
		&\quad\leqslant
		\begin{cases}
			C\displaystyle\int_0^t(t-\tau)^{-\frac{1}{2}}(1+\tau)^{-1}d\tau\|u_0\|_{X^s}\|v\|_{L^{\infty}(0,\infty;\Lh^1\Lv^{\infty})} & (k=1,2)\\
			C\displaystyle\int_0^t(t-\tau)^{-\frac{1}{2}}(1+\tau)^{-\frac{3}{2}}d\tau\|u_0\|_{X^s}\|v\|_{L^{\infty}(0,\infty;\Lh^1\Lv^{\infty})} & (k=3)
		\end{cases}\\
		&\quad\leqslant
		\begin{cases}
			C\|u_0\|_{X^s}\|v\|_{L^{\infty}(0,\infty;\Lh^1\Lv^{\infty})} & (k=1,2)\\
			C(1+t)^{-\frac{1}{2}}\|u_0\|_{X^s}\|v\|_{L^{\infty}(0,\infty;\Lh^1\Lv^{\infty})} & (k=3)
		\end{cases}.
	\end{align*}
	Finally, for the estimate (\ref{W_Duha_est_2-4}), we have
	\begin{align*}
		&\left\|\partial_3 \int_0^te^{(t-\tau)\Deltah}\nablah(u_kv_l)(\tau)d\tau\right\|_{\Lh^1\Lv^{\infty}}\\
		&\quad \leqslant
		\int_0^t\|\nablah\Gh(t-\tau)\|_{L^1(\mathbb{R}^2)}
		\left(
		\|\partial_3u_k(\tau)\|_{L^{\infty}}\|v_l(\tau)\|_{\Lh^1\Lv^{\infty}}
		+
		\|u_k(\tau)\|_{L^{\infty}}\|\partial_3v_l(\tau)\|_{\Lh^1\Lv^{\infty}}
		\right)d\tau\\
		&\quad\leqslant
		\begin{cases}
			C\displaystyle\int_0^t(t-\tau)^{-\frac{1}{2}}(1+\tau)^{-1}d\tau\|u_0\|_{X^s}\|v\|_{Y} & (k=1,2)\\
			C\displaystyle\int_0^t(t-\tau)^{-\frac{1}{2}}(1+\tau)^{-\frac{3}{2}}d\tau\|u_0\|_{X^s}\|v\|_{Y} & (k=3)
		\end{cases}\\
		&\quad\leqslant
		\begin{cases}
			C\|u_0\|_{X^s}\|v\|_{Y} & (k=1,2)\\
			C(1+t)^{-\frac{1}{2}}\|u_0\|_{X^s}\|v\|_{Y} & (k=3)
		\end{cases}.
	\end{align*}
	This completes the proof.
\end{proof}
\begin{lemm}\label{lemm:W_Duha_est_3}
		Let $s\in \mathbb{N}$ with $s\geqslant 9$ and let $u$ be the solution to (\ref{eq:ANS_1}) with the data $u_0\in X^s(\mathbb{R}^3)$ satisfying $\nabla\cdot u_0=0$ $\|u_0\|_{X^s}\leqslant \delta_3$.
		Then there exists an absolute positive constant $C$ such that
	\begin{align}
		&\left\||\xh|\int_0^tK^{(m)}_{\beta,\gamma}(t-\tau)*(u_kv_l)(\tau,x)d\tau\right\|_{\Lh^1\Lv^{\infty}}
		\leqslant
		C\|u_0\|_{X^s}\|v\|_{Y},\label{W_Duha_est_3-1}\\
		&\left\|\int_0^tK^{(m)}_{\beta,\gamma}(t-\tau)*(u_kv_l)(\tau)d\tau\right\|_{\Lh^1\Lv^{\infty}}
		\leqslant
		C(1+t)^{-\frac{1}{2}}\|u_0\|_{X^s}\|v\|_{L^{\infty}(0,\infty;\Lh^1\Lv^{\infty})},\label{W_Duha_est_3-3}\\
		&\left\|\partial_3 \int_0^tK^{(m)}_{\beta,\gamma}(t-\tau)*(u_kv_l)(\tau)d\tau\right\|_{\Lh^1\Lv^{\infty}}
		\leqslant
		C(1+t)^{-\frac{1}{2}}\|u_0\|_{X^s}\|v\|_{Y} \label{W_Duha_est_3-4}
	\end{align}
	for all $k,l=1,2,3$, $m=1,2$ and $t\geqslant 0$.
	where $K_{\beta,\gamma}^{(m)}$ are defined by (\ref{K^m})
	for $(\beta,\gamma)\in (\mathbb{N}\cup \{0\})^2\times (\mathbb{N}\cup \{0\})$ satisfying $|\beta|+\gamma=3$.
\end{lemm}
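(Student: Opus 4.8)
The plan is to reduce all three estimates to Young-type convolution bounds for the kernel $K^{(m)}_{\beta,\gamma}$ against the nonlinearity $u_kv_l$, and to exploit the fact that for $|\beta|+\gamma=3$ the kernel decays faster than a purely horizontal one, as recorded in Lemma \ref{lemm:Linear_est_3}. First I note two simplifications. Since $K^{(2)}_{\beta,\gamma}={\rm sgn}(x_3)K^{(1)}_{\beta,\gamma}$ and the horizontal operators $\nablah^{\beta},(-\Deltah)^{\gamma/2}$ commute with multiplication by ${\rm sgn}(x_3)$, we have $|K^{(2)}_{\beta,\gamma}|=|K^{(1)}_{\beta,\gamma}|$ pointwise; hence every $\Lh^p\Lv^q$-norm bound and every Young convolution bound for $K^{(1)}_{\beta,\gamma}$ transfers verbatim to $K^{(2)}_{\beta,\gamma}$, and it suffices to treat a single kernel, abbreviated $K_{\beta,\gamma}$. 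Second, for the derivative estimate (\ref{W_Duha_est_3-4}) I move $\partial_3$ onto the data, using $\partial_3\bigl(K_{\beta,\gamma}(t-\tau)*(u_kv_l)\bigr)=K_{\beta,\gamma}(t-\tau)*\partial_3(u_kv_l)$, so no derivative falls on ${\rm sgn}(x_3)$. With $|\beta|+\gamma=3$, the first estimate of Lemma \ref{lemm:Linear_est_3} supplies the three kernel bounds I will use,
\begin{align*}
	\|K_{\beta,\gamma}(t)\|_{\Lh^1\Lv^1}\leqslant Ct^{-\frac12},\qquad
	\|K_{\beta,\gamma}(t)\|_{\Lh^1\Lv^2}\leqslant Ct^{-\frac34},\qquad
	\||\xh|K_{\beta,\gamma}(t)\|_{\Lh^1\Lv^2}\leqslant Ct^{-\frac14},
\end{align*}
the key point being that passing from $\Lv^1$ to $\Lv^2$ gains an extra factor $t^{-1/4}$ coming from the vertical Gaussian in the representation $K=\int_0^\infty \Gh(t+s)\Gv(s)\,ds$.

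The second ingredient is a set of product bounds in which $u$ carries all vertical integrability while $v$ carries horizontal $L^1$-integrability, the spatial weight, and the $x_3$-derivative through $\|v\|_Y$. Because $s\geqslant9$ and $\|u_0\|_{X^s}\leqslant\delta_3$, Proposition \ref{prop:Lh^infLv1-est} gives $\|\nabla^\alpha u(\tau)\|_{\Lh^\infty\Lv^1}\leqslant C\tau^{-1-|\alphah|/2}\|u_0\|_{X^s}$; combining this with the time-uniform embedding $H^s\hookrightarrow\Lh^\infty\Lv^2$ near $\tau=0$, the $L^\infty$-decay from Remark \ref{rem:assumption}, and interpolation in $\Lv$, I obtain $\|u_k(\tau)\|_{\Lh^\infty\Lv^2}+\|\partial_3u_k(\tau)\|_{\Lh^\infty\Lv^2}\leqslant C(1+\tau)^{-1}\|u_0\|_{X^s}$ for $k=1,2$ (with faster decay when $k=3$). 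Using $\|u_kv_l\|_{\Lh^1\Lv^2}\leqslant\|u_k\|_{\Lh^\infty\Lv^2}\|v_l\|_{\Lh^1\Lv^\infty}$ and $\||\xh|u_kv_l\|_{\Lh^1\Lv^\infty}\leqslant\|u_k\|_{L^\infty}\||\xh|v_l\|_{\Lh^1\Lv^\infty}$ together with the building blocks $\|v_l\|_{\Lh^1\Lv^\infty},\|\partial_3v_l\|_{\Lh^1\Lv^\infty}\leqslant\|v\|_Y$, $\||\xh|\vh\|_{\Lh^1\Lv^\infty}\leqslant(1+\tau)^{1/2}\|v\|_Y$ and $\||\xh|v_3\|_{\Lh^1\Lv^\infty}\leqslant\|v\|_Y$ encoded in $\|v\|_Y$, this yields
\begin{align*}
	\|u_kv_l(\tau)\|_{\Lh^1\Lv^2}+\|\partial_3(u_kv_l)(\tau)\|_{\Lh^1\Lv^2}\leqslant C(1+\tau)^{-1}\|u_0\|_{X^s}\|v\|_Y,\qquad
	\||\xh|u_kv_l(\tau)\|_{\Lh^1\Lv^\infty}\leqslant C(1+\tau)^{-\frac12}\|u_0\|_{X^s}\|v\|_Y.
\end{align*}

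With these in hand the three claims follow by Young's inequality in $x$ followed by the $\tau$-integration. For (\ref{W_Duha_est_3-3}) and (\ref{W_Duha_est_3-4}) I use $\|K_{\beta,\gamma}(t-\tau)*g\|_{\Lh^1\Lv^\infty}\leqslant\|K_{\beta,\gamma}(t-\tau)\|_{\Lh^1\Lv^2}\|g\|_{\Lh^1\Lv^2}$ with $g=u_kv_l$ (respectively $g=\partial_3(u_kv_l)$), which leads to $\int_0^t(t-\tau)^{-3/4}(1+\tau)^{-1}\,d\tau$; splitting at $\tau=t/2$ bounds this by $Ct^{-3/4}\log(2+t)\leqslant C(1+t)^{-1/2}$ for $t\geqslant1$, while for $0<t\leqslant1$ a crude estimate gives boundedness. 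For the weighted bound (\ref{W_Duha_est_3-1}) I split the weight through $|\xh|\leqslant|\xh-\yh|+|\yh|$, so that $\||\xh|(K_{\beta,\gamma}*g)\|_{\Lh^1\Lv^\infty}\leqslant\||\xh|K_{\beta,\gamma}\|_{\Lh^1\Lv^2}\|g\|_{\Lh^1\Lv^2}+\|K_{\beta,\gamma}\|_{\Lh^1\Lv^1}\||\xh|g\|_{\Lh^1\Lv^\infty}$, producing $\int_0^t(t-\tau)^{-1/4}(1+\tau)^{-1}\,d\tau$ and $\int_0^t(t-\tau)^{-1/2}(1+\tau)^{-1/2}\,d\tau$, both uniformly bounded in $t$, which gives the desired $C\|u_0\|_{X^s}\|v\|_Y$.

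The main obstacle is the borderline $\tau$-integrability of the horizontal components $k=1,2$: there $\|u_k\|_{\Lh^\infty\Lv^2}$ decays only like $(1+\tau)^{-1}$, so using the slowest kernel bound $\|K_{\beta,\gamma}\|_{\Lh^1\Lv^1}\sim t^{-1/2}$ would yield only the defective rate $t^{-1/2}\log(2+t)$. The resolution — and the reason the $\Lv^2$-kernel bound is used rather than the $\Lv^1$ one — is that the extra $t^{-1/4}$ of vertical decay of $K_{\beta,\gamma}$ absorbs the logarithm, since $t^{-3/4}\log(2+t)\leqslant C(1+t)^{-1/2}$. The accompanying technical point is the behaviour near $\tau=0$, where Proposition \ref{prop:Lh^infLv1-est} degenerates like $\tau^{-1}$; this is circumvented by replacing the $\Lh^\infty\Lv^1$-bound there with the time-uniform embedding $H^s\hookrightarrow\Lh^\infty\Lv^2$, which is exactly where the hypothesis $s\geqslant9$ (hence the smallness $\delta_3$ inherited from Proposition \ref{prop:Lh^infLv1-est}) is needed.
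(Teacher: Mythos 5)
Your proposal is correct and follows essentially the same route as the paper's proof: the same kernel bounds from Lemma \ref{lemm:Linear_est_3} (with the extra $t^{-1/4}$ gained in the $\Lh^1\Lv^2$-norm absorbing the logarithm), Young's inequality with $\Lv^2\ast\Lv^2\to\Lv^\infty$ vertically, control of $\|u_k\|_{\Lh^\infty\Lv^2}$ by interpolating between the $\Lh^\infty\Lv^1$-decay of Proposition \ref{prop:Lh^infLv1-est} and the $L^\infty$-decay, and the weight splitting $|\xh|\leqslant|\xh-\yh|+|\yh|$ for (\ref{W_Duha_est_3-1}); your only departures are cosmetic (you use the $\Lh^1\Lv^2$ kernel bound on all of $[0,t]$, legitimate since $(t-\tau)^{-3/4}$ is integrable, where the paper splits at $t/2$ and switches to $\|K^{(m)}_{\beta,\gamma}\|_{L^1}\sim(t-\tau)^{-1/2}$ near $\tau=t$, and you replace the paper's singular bound $\tau^{-1/2}(1+\tau)^{-1/2}$ on $\|u_k(\tau)\|_{\Lh^\infty\Lv^2}$ by a uniform $(1+\tau)^{-1}$ bound via the embedding near $\tau=0$). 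Two small corrections: in (\ref{W_Duha_est_3-3}) the asserted bound carries the weaker norm $\|v\|_{L^{\infty}(0,\infty;\Lh^1\Lv^{\infty})}$ --- your chain does deliver this, since only $\|v_l(\tau)\|_{\Lh^1\Lv^{\infty}}$ enters that estimate, so you should not state the corresponding product bound with $\|v\|_Y$ --- and the hypothesis $s\geqslant 9$ is really needed for Proposition \ref{prop:Lh^infLv1-est} itself (through the Gagliardo--Nirenberg step in Lemma \ref{lemm:Duha_est_4}), not for the embedding $H^s\hookrightarrow\Lh^{\infty}\Lv^2$, which already holds for $s\geqslant 2$.
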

\begin{proof}
	For the estimate (\ref{W_Duha_est_3-1}), we have by Lemma \ref{lemm:Linear_est_3} that
	\begin{align*}
		&\left\||\xh|\int_0^tK^{(m)}_{\beta,\gamma}(t-\tau)*(u_kv_l)(\tau,x)d\tau\right\|_{\Lh^1\Lv^{\infty}}\\
		&\quad\leqslant
		\int_0^{t}
		\||\xh|K^{(m)}_{\beta,\gamma}(t-\tau,x)\|_{\Lh^1\Lv^{2}}\|u_k(\tau)v_l(\tau)\|_{\Lh^1\Lv^2}d\tau
		+
		\int_0^t
		\|K^{(m)}_{\beta,\gamma}(t-\tau)\|_{L^1}\|u_k(\tau)|\xh|v_l(\tau)\|_{\Lh^1\Lv^{\infty}}d\tau\\
		&\quad\leqslant
		C\int_0^t
		(t-\tau)^{-\frac{1}{4}}
		\|u_k(\tau)\|_{\Lh^{\infty}\Lv^1}^{\frac{1}{2}}\|u_k(\tau)\|_{L^{\infty}}^{\frac{1}{2}}
		\|v_l(\tau)\|_{\Lh^1\Lv^{\infty}}d\tau
		+
		C\int_0^t
		(t-\tau)^{-\frac{1}{2}}\|u_k(\tau)\|_{L^{\infty}}\||\xh|v_l(\tau)\|_{\Lh^1\Lv^{\infty}}d\tau\\
		&\quad\leqslant
		C\int_0^t(t-\tau)^{-\frac{1}{4}}\tau^{-\frac{1}{2}}(1+\tau)^{-\frac{1}{2}}d\tau\|u_0\|_{X^s}\|v\|_Y
		+
		\int_0^t
		(t-\tau)^{-\frac{1}{2}}(1+\tau)^{-\frac{1}{2}}d\tau\|u_0\|_{X^s}\|v\|_Y\\
		&\quad\leqslant
		C\|u_0\|_{X^s}\|v\|_Y.
	\end{align*}
	On the estimate (\ref{W_Duha_est_3-3}), we see that
	\begin{align*}
		&\left\|\int_0^tK^{(m)}_{\beta,\gamma}(t-\tau)*(u_kv_l)(\tau)d\tau\right\|_{\Lh^1\Lv^{\infty}}\\
		&\quad\leqslant
		\int_0^{\frac{t}{2}}
		\|K^{(m)}_{\beta,\gamma}(t-\tau)\|_{\Lh^1\Lv^{2}}\|u_k(\tau)v_l(\tau)\|_{\Lh^1\Lv^2}d\tau
		+
		\int_{\frac{t}{2}}^t
		\|K^{(m)}_{\beta,\gamma}(t-\tau)\|_{L^1}\|u_k(\tau)v_l(\tau)\|_{\Lh^1\Lv^{\infty}}d\tau\\
		&\quad\leqslant
		C\int_0^{\frac{t}{2}}
		(t-\tau)^{-\frac{3}{4}}
		\|u_k(\tau)\|_{\Lh^{\infty}\Lv^1}^{\frac{1}{2}}\|u_k(\tau)\|_{L^{\infty}}^{\frac{1}{2}}
		\|v_l(\tau)\|_{\Lh^1\Lv^{\infty}}d\tau
		+
		C\int_{\frac{t}{2}}^t
		(t-\tau)^{-\frac{1}{2}}\|u_k(\tau)\|_{L^{\infty}}\|v_l(\tau)\|_{\Lh^1\Lv^{\infty}}d\tau\\
		&\quad\leqslant
		C\int_0^{\frac{t}{2}}
		(t-\tau)^{-\frac{3}{4}}
		\tau^{-\frac{1}{2}}
		(1+\tau)^{-\frac{1}{2}}d\tau\|u_0\|_{X^s}\|v\|_{L^{\infty}(0,\infty;\Lh^1\Lv^{\infty})}
		+
		C\int_{\frac{t}{2}}^t
		(t-\tau)^{-\frac{1}{2}}(1+\tau)^{-1}d\tau\|u_0\|_{X^s}\|v\|_{L^{\infty}(0,\infty;\Lh^1\Lv^{\infty})}\\
		&\quad\leqslant
		CA^2(1+\tau)^{-\frac{1}{2}}\|u_0\|_{X^s}\|v\|_{L^{\infty}(0,\infty;\Lh^1\Lv^{\infty})}
	\end{align*}
	Finally, we can prove (\ref{W_Duha_est_3-4}).
	By the similar calculation as above, we see that
	\begin{align*}
		&\left\|\partial_3 \int_0^tK^{(m)}_{\beta,\gamma}(t-\tau)*(u_kv_l)(\tau)d\tau\right\|_{\Lh^1\Lv^{\infty}}\\
		&\quad\leqslant
		\int_0^{\frac{t}{2}}
		\|K^{(m)}_{\beta,\gamma}(t-\tau)\|_{\Lh^1\Lv^{2}}
		\left(\|\partial_3u_k(\tau)\|_{\Lh^{\infty}\Lv^2}\|v_l(\tau)\|_{\Lh^1\Lv^{\infty}}
		+
		\|u_k(\tau)\|_{\Lh^{\infty}\Lv^2}\|\partial_3v_l(\tau)\|_{\Lh^1\Lv^{\infty}}\right)
		d\tau\\
		&\qquad +
		\int_{\frac{t}{2}}^t
		\|K^{(m)}_{\beta,\gamma}(t-\tau)\|_{L^1}
		\left(\|\partial_3u_k(\tau)\|_{L^{\infty}}\|v_l(\tau)\|_{\Lh^1\Lv^{\infty}}
		+\|u_k(\tau)\|_{L^{\infty}}\|\partial_3v_l(\tau)\|_{\Lh^1\Lv^{\infty}}\right)
		d\tau\\
		&\quad\leqslant
		C(1+t)^{-\frac{1}{2}}\|u_0\|_{X^s}\|v\|_{Y}.
	\end{align*}
	Hence, we complete the proof.
\end{proof}

We are ready to prove Proposition \ref{prop:weight}.
\begin{proof}[Proof of Proposition \ref{prop:weight}]
	Let $s\in \mathbb{N}$ satisfy $s\geqslant 9$ and let $u_0\in X^s(\mathbb{R}^3)$ satisfy $\nabla \cdot u_0=0$ and $\|u_0\|_{X^s}\leqslant \delta_3$.
	Let $u$ be the solution to (\ref{eq:ANS_1}) with the initial data $u_0$.
	It is easy to see that there exists an absolute positive constant $C_4$ such that
	\begin{align*}
		\|e^{t\Deltah}u_0\|_{Y}\leqslant C_4\|u_0\|_{\widetilde{X^s}}.
	\end{align*}
	For each $v\in Y$, we define $\Psi[v]=(\Psi^{\h}[v],\Psi^{\vv}[v])$ by
	\begin{align*}
		\begin{cases}
			\Psi^{\h}[v](t):=e^{t\Deltah}u_{0,\h}+\displaystyle\sum_{m=1}^5\Nhh_m[u,v](t),\\
			\Psi^{\vv}[v](t):=e^{t\Deltah}u_{0,3}+\displaystyle\sum_{m=1}^3\Nvv_m[u,v](t).
		\end{cases}
	\end{align*}
	Then, by Lemmas \ref{lemm:W_Duha_est_1}, \ref{lemm:W_Duha_est_2} and \ref{lemm:W_Duha_est_3},
	there exists an absolute positive constant $C_5$ such that
	\begin{align*}
		&\left\|\left(\sum_{m=1}^5\Nhh_m[u,v],\sum_{m=1}^3\Nvv_m[u,v]\right)\right\|_{Y}
		\leqslant
		C_5\|u_0\|_{X^s}\|v\|_Y,\\
		&\left\|\left(\sum_{m=1}^5\Nhh_m[u,w],\sum_{m=1}^3\Nvv_m[u,w]\right)\right\|_{L^{\infty}(0,\infty;\Lh^1\Lv^{\infty})}
		\leqslant
		C_5\|u_0\|_{X^s}\|w\|_{L^{\infty}(0,\infty;\Lh^1\Lv^{\infty})}
	\end{align*}
	for all $v\in Y$ and $w\in L^{\infty}(0,\infty;\Lh^1\Lv^{\infty}(\mathbb{R}^3))$.
	Thus, it holds
	\begin{align}\label{vY}
		\begin{split}
			\|\Psi[v]\|_Y
				&\leqslant \|e^{t\Deltah}u_0\|_Y+\left\|\left(\sum_{m=1}^5\Nhh_m[u,v],\sum_{m=1}^3\Nvv_m[u,v]\right)\right\|_{Y}\\
				&\leqslant
				C_4\|u_0\|_{\widetilde{X^s}}
				+C_5\|u_0\|_{X^s}\|v\|_Y<\infty
		\end{split}
	\end{align}
	for $v\in Y$, which implies $\Psi[v]\in Y$.
	Let $\delta_2=\min\{\delta_3,1/(2C_5)\}$ and assume $\|u_0\|_{X^s}\leqslant \delta_2$.
	Then, we have
	\begin{align*}
		\|\Psi[v_1]-\Psi[v_2]\|_Y
			&\leqslant \left\|\left(\sum_{m=1}^5\Nhh_m[u,v_1-v_2],\sum_{m=1}^3\Nvv_m[u,v_1-v_2]\right)\right\|_{Y}\\
			&\leqslant C_5\|u_0\|_{X^s}\|v_1-v_2\|_Y\\
			&\leqslant \frac{1}{2}\|v_1-v_2\|_Y.
	\end{align*}
	Therefore, the contraction mapping principle yields that
	there exists a unique $\widetilde{v}\in Y$ such that $\widetilde{v}=\Psi[\widetilde{v}]$.
	Then by (\ref{vY}), we have
	\begin{equation*}
		\|\widetilde{v}\|_Y
			\leqslant
			C_4\|u_0\|_{\widetilde{X^s}}
			+\frac{1}{2}\|\widetilde{v}\|_Y,
	\end{equation*}
	which implies $\|\widetilde{v}\|_Y\leqslant 2C_4\|u_0\|_{\widetilde{X^s}}$.

	Finally, we show $\widetilde{v}=u$.
	Since $\widetilde{v}, u\in L^{\infty}(0,\infty;\Lh^1\Lv^{\infty}(\mathbb{R}^3))$ and $u$ is also a solution to (\ref{eq:linear_integral_eq}),
	we have
	\begin{align*}
		\begin{split}
			\|\widetilde{v}-u\|_{L^{\infty}(0,\infty;\Lh^1\Lv^{\infty})}
				&=\|\Psi[\widetilde{v}]-\Psi[u]\|_{L^{\infty}(0,\infty;\Lh^1\Lv^{\infty})}\\
				&\leqslant C_5\|u_0\|_{X^s}\|\widetilde{v}-u\|_{L^{\infty}(0,\infty;\Lh^1\Lv^{\infty})}\\
				&\leqslant \frac{1}{2}\|\widetilde{v}-u\|_{L^{\infty}(0,\infty;\Lh^1\Lv^{\infty})}.
		\end{split}
	\end{align*}
	This implies $\widetilde{v}=u$.
	Hence, we see that $u\in Y$ and $\|u\|_Y\leqslant 2C_4\|u_0\|_{\widetilde{X^s}}$, which complete the proof.
\end{proof}
\section{Proofs of Theorem \ref{thm:main_2} and Corollary \ref{cor}}\label{s7}
Now, we give the proofs of Theorem \ref{thm:main_2}  and Corollary \ref{cor}.
\begin{proof}[Proof of Proposition \ref{thm:main_2}]
	Let $s\in \mathbb{N}$ satisfy $s\geqslant 9$ and let $\delta_2$ be the constant determined in Proposition \ref{prop:weight}.
	Let $u_0\in X^s(\mathbb{R}^3)$ satisfy $|\xh|u_0(x)\in L^1(\mathbb{R}^2_{\xh};(L^1\cap L^{\infty})(\mathbb{R}_{x_3}))$, $\nabla\cdot u_0=0$ and $\|u_0\|_{X^s}\leqslant \delta_2$.
	Then, by Theorem \ref{thm:main_1}, Proposition \ref{prop:Lh^infLv1-est} and Proposition \ref{prop:weight}, the solution $u$ of (\ref{eq:ANS_1}) with the initial data $u_0$ satisfies the assumptions {\bf (A1)}-{\bf (A4)} with $T=\infty$,
	$A=C\|u_0\|_{X^s}$ and $B=C\|u_0\|_{\widetilde{X^s}}$ for some constant $C$.
	Hence, Lemmas \ref{lemm:Linear_est_1}, \ref{lemm:Duha_est_1} and \ref{lemm:Duha_expa_1} imply that
	\begin{align*}
		&\left\|
			\uh(t,x)
			-
			\Gh(t,\xh)\int_{\mathbb{R}^2}u_{0,\h}(\yh,x_3)d\yh\right.\\
		&\left.
			\qquad \qquad+
			\Gh(t,\xh)\int_0^{\infty}\int_{\mathbb{R}^2}\partial_3(u_3\uh)(\tau,\yh,x_3)d\yh d\tau
		\right\|_{L^p_x}\\
		&\quad\leqslant
		\left\|e^{t\Deltah}u_{0,\h}(x)-\Gh(t,\xh)\int_{\mathbb{R}^2}u_{0,\h}(\yh,x_3)d\yh \right\|_{L^p_x}\\
		&\qquad+
		\left\|\Nhh_1[u](t)+\Gh(t,\xh)\int_0^{\infty}\int_{\mathbb{R}^2}\partial_3(u_3\uh)(\tau,\yh,x_3)d\yh d\tau\right\|_{L^p_x}
		+
		\sum_{m=2}^5\|\Nhh_m[u](t)\|_{L^p_x}\\
		&\quad\leqslant
		C\||\xh|u_{0,\h}\|_{\Lh^1\Lv^p}t^{-(1- \frac{1}{p})-\frac{1}{2}}
		+C\|u_0\|_{X^s}\|u_0\|_{\widetilde{X^s}}t^{-(1- \frac{1}{p})-\frac{1}{2}}\log(2+t)\\
		&\quad \leqslant
		C\|u_0\|_{\widetilde{X^s}}t^{-(1- \frac{1}{p})-\frac{1}{2}}\log t
	\end{align*}
	for $t\geqslant 2$.
	By Lemmas \ref{lemm:Linear_est_1} and \ref{lemm:Duha_est_1}, we have
	\begin{align*}
		&\left\|
			u_3(t,x)
			-
			\Gh(t,\xh)\int_{\mathbb{R}^2}u_{0,3}(\yh,x_3)d\yh
		\right\|_{L^p_x}\\
		&\quad\leqslant
		t^{\frac{3}{2}(1- \frac{1}{p})}
		\left\|e^{t\Deltah}u_{0,3}(x)
		-
		\Gh(t,\xh)\int_{\mathbb{R}^2}u_{0,3}(\yh,x_3)d\yh
		\right\|_{L^p_x}
		+\sum_{m=1}^3\|\Nvv_m[u](t)\|_{L^p_x}\\
		&\quad\leqslant
		Ct^{-\frac{3}{2}(1- \frac{1}{p})-\frac{1}{2p}}\||\xh|u_{0}(x)\|_{L^1_x}
		+C\|u_0\|_{{X^s}}^2t^{-\frac{3}{2}(1- \frac{1}{p})-\frac{1}{2p}}
		+C\|u_0\|_{{X^s}}^2t^{-\frac{9}{8}(1- \frac{1}{p})-\frac{1}{2}}\log(2+t)\\
		&\quad\leqslant
		\begin{cases}
			C\|u_0\|_{\widetilde{X^s}}t^{-\frac{3}{2}(1- \frac{1}{p})- \frac{1}{2p}} & (1<p\leqslant \infty)\\
			C\|u_0\|_{\widetilde{X^s}}t^{- \frac{1}{2}}\log t & (p=1)
		\end{cases}
	\end{align*}
	for $t\geqslant 2$.
	Finally, we show (\ref{main_2:asymp_u3_second}).
	By Lemmas \ref{lemm:Linear_est_2}, \ref{lemm:Duha_est_1} and \ref{lemm:Duha_expa_2}, we obtain
	\begin{align*}
		&t^{\frac{3}{2}(1- \frac{1}{p})+\frac{1}{2p}}
		\left\|
		u_3(t,x)
		-\Gh(t,\xh)\int_{\mathbb{R}^2}u_{0,3}(\yh,x_3)d\yh
		+\nablah \Gh(t,\xh)\cdot \int_{\mathbb{R}^2}\yh u_{0,3}(\yh,x_3)d\yh
		\right.\\
		&\left. \qquad \qquad \qquad \qquad
		-\nablah \Gh(t,\xh) \cdot \int_0^{\infty}\int_{\mathbb{R}^2}(u_3\uh)(\tau,\yh,x_3)d\yh d\tau
		\right\|_{L^p_x}\\
		&\quad\leqslant
		t^{\frac{3}{2}(1- \frac{1}{p})+\frac{1}{2p}}
		\left\|
		e^{t\Deltah}u_{0,3}(x)
		-\Gh(t,\xh)\int_{\mathbb{R}^2}u_{0,3}(\yh,x_3)d\yh
		+\nablah \Gh(t,\xh)\cdot \int_{\mathbb{R}^2}\yh u_{0,3}(\yh,x_3)d\yh
		\right\|_{L^p_x}\\
		&\qquad+
		t^{\frac{3}{2}(1- \frac{1}{p})+\frac{1}{2p}}
		\left\|
		\Nvv_1[u](t,x)
		-\nablah \Gh(t,\xh) \cdot \int_0^{\infty}\int_{\mathbb{R}^2}(u_3\uh)(\tau,\yh,x_3)d\yh d\tau
		\right\|_{L^p_x}\\
		&\qquad+
		t^{\frac{3}{2}(1- \frac{1}{p})+\frac{1}{2p}}\|\Nvv_2[u](t)\|_{L^p}
		+t^{\frac{3}{2}(1- \frac{1}{p})+\frac{1}{2p}}\|\Nvv_3[u](t)\|_{L^p}\\
		&\quad\leqslant
		Ct^{-\frac{1}{2}+\frac{1}{2p}}\mathcal{R}_{p,1}(t)^{\frac{1}{p}}\||\xh|u_0\|_{L^1}^{1- \frac{1}{p}}\\
		&\qquad+
		t^{(1- \frac{1}{p})+ \frac{1}{2}}
		\left\|
		\Nvv_1[u](t,x)
		-\nablah \Gh(t,\xh) \cdot \int_0^{\infty}\int_{\mathbb{R}^2}(u_3\uh)(\tau,\yh,x_3)d\yh d\tau
		\right\|_{L^p_x}\\
		&\qquad+
		Ct^{-\frac{1}{8}(1-\frac{1}{p})}\log(2+t).
	\end{align*}
	Then, the right hand side converges to $0$ if $1<p\leqslant \infty$.
	Hence, we complete the proof.
\end{proof}
\begin{proof}[Proof of Corollary \ref{cor}]
	Passing the limit inferior as $t\to \infty$ in
	\begin{align*}
		&t^{\frac{3}{2}}\left\|
		u_3(t,x)
		-\Gh(t,\xh)\int_{\mathbb{R}^2}u_{0,3}(\yh,x_3)d\yh
		\right\|_{L^{\infty}_x}\\
		&\quad\geqslant
		t^{\frac{3}{2}}
		\left\|
		\nablah \Gh(t,\xh)\cdot \int_{\mathbb{R}^2}\yh u_{0,3}(\yh,x_3)d\yh
		-\nablah \Gh(t,\xh) \cdot \int_0^{\infty}\int_{\mathbb{R}^2}(u_3\uh)(\tau,\yh,x_3)d\yh d\tau
		\right\|_{L^{\infty}_x}\\
		&\qquad -
		t^{\frac{3}{2}}
		\left\|
		u_3(t,x)
		-\Gh(t,\xh)\int_{\mathbb{R}^2}u_{0,3}(\yh,x_3)d\yh
		+\nablah \Gh(t,\xh)\cdot \int_{\mathbb{R}^2}\yh u_{0,3}(\yh,x_3)d\yh
		\right.\\
		&\left. \qquad \qquad \qquad \qquad
		-\nablah \Gh(t,\xh) \cdot \int_0^{\infty}\int_{\mathbb{R}^2}(u_3\uh)(\tau,\yh,x_3)d\yh d\tau
		\right\|_{L^{\infty}_x},
	\end{align*}
	we have by (\ref{main_2:asymp_u3_second}) that
	\begin{align*}
		&\liminf_{t\to \infty}
		t^{\frac{3}{2}}\left\|
		u_3(t,x)
		-\Gh(t,\xh)\int_{\mathbb{R}^2}u_{0,3}(\yh,x_3)d\yh
		\right\|_{L^{\infty}_x}\\
		&\quad\geqslant
		\liminf_{t\to \infty}
		t^{\frac{3}{2}}
		\left\|
		\nablah \Gh(t,\xh)\cdot \int_{\mathbb{R}^2}\yh u_{0,3}(\yh,x_3)d\yh
		-\nablah \Gh(t,\xh) \cdot \int_0^{\infty}\int_{\mathbb{R}^2}(u_3\uh)(\tau,\yh,x_3)d\yh d\tau
		\right\|_{L^{\infty}_x}.
	\end{align*}
	Here, let us consider the following initial data:
	\begin{align*}
		u_0(x):=\eta\phi(x),\qquad
		\phi(x):=\left(0,-x_3e^{-|x|^2},x_2e^{-|x|^2}\right),
	\end{align*}
	where $\eta\in (0,\delta_2(9)/\|\phi\|_{X^{9}}]$ is a positive constant to be determined later.
	It is easy to check that this $u_0$ satisfies
	$u_0\in X^{9}(\mathbb{R}^3)$, $|\xh|u_0(x)\in L^1(\mathbb{R}^2_{\xh};(L^1\cap L^{\infty})(\mathbb{R}_{x_3}))$,
	$\nabla\cdot u_0=0$
	and $\|u_0\|_{X^{9}}\leqslant \delta_2(9)$.
	Then, we see that
	\begin{align*}
		\nablah \Gh(t,\xh)\cdot \int_{\mathbb{R}^2}\yh u_{0,3}(\yh,x_3)d\yh
		&=
		\eta\partial_1\Gh(t,\xh)\int_{-\infty}^{\infty}y_1e^{-y_1^2}dy_1\int_{-\infty}^{\infty}y_2e^{-y_2^2}dy_2e^{-x_3^2}\\
		&\quad
		+\eta\partial_2\Gh(t,\xh)\int_{-\infty}^{\infty}e^{-y_1^2}dy_1\int_{-\infty}^{\infty}y_2^2e^{-y_2^2}dy_2e^{-x_3^2}\\
		&=C\eta t^{-\frac{3}{2}}\partial_2\Gh(1,t^{-\frac{1}{2}}\xh)e^{-x_3^2},
	\end{align*}
	which implies
	\begin{align*}
		\left\|
		\nablah \Gh(t,\xh)\cdot \int_{\mathbb{R}^2}\yh u_{0,3}(\yh,x_3)d\yh
		\right\|_{L^{\infty}_x}
		=C_6\eta t^{-\frac{3}{2}}
	\end{align*}
	for some absolute positive constant $C_6$.
	On the other hand, the corresponding solution $u$ satisfies
	\begin{align*}
		\left\|
		\nablah \Gh(t,\xh) \cdot \int_0^{\infty}\int_{\mathbb{R}^2}(u_3\uh)(\tau,\yh,x_3)d\yh d\tau
		\right\|_{L^{\infty}_x}
		&\leqslant
		Ct^{-\frac{3}{2}}\int_0^{\infty}\|u_3(\tau)\|_{L^{\infty}}\|\uh(\tau)\|_{\Lh^1\Lv^{\infty}}d\tau\\
		&\leqslant
		C\int_0^{\infty}(1+\tau)^{-\frac{3}{2}}d\tau\cdot\eta^2\|\phi\|_{X^{9}}^2t^{-\frac{3}{2}}\\
		&=
		C_7\eta^2t^{-\frac{3}{2}}
	\end{align*}
	for some absolute positive constant $C_7$.
	Hence, if we choose $\eta$ such that $0<\eta\leqslant \min\{\delta_2(9)/\|\phi\|_{X^{9}},C_6/(2C_7)\}$, then we have
	\begin{align*}
		\liminf_{t\to \infty}
		t^{\frac{3}{2}}\left\|
		u_3(t,x)
		-\Gh(t,\xh)\int_{\mathbb{R}^2}u_{0,3}(\yh,x_3)d\yh
		\right\|_{L^{\infty}_x}
		\geqslant
		C_6\eta-C_7\eta^2
		\geqslant
		\frac{C_6}{2}\eta
		>0.
	\end{align*}
	This completes the proof.
\end{proof}

\noindent
{\bf Acknowledgements.} \\
This work was partly supported by Grant-in-Aid for JSPS Research Fellow, Grant Number JP20J20941.
The author would like to express his sincere gratitude to Professor Jun-ichi Segata, Faculty of Mathematics, Kyushu University, for many fruitful advices and continuous encouragement.

\begin{bibdiv}
\begin{biblist}
\bib{BCD}{book}{
		author={Bahouri, Hajer},
		author={Chemin, Jean-Yves},
		author={Danchin, Rapha\"{e}l},
		title={Fourier analysis and nonlinear partial differential equations},
		publisher={Springer, Heidelberg},
		date={2011},
}
\bib{CDGG}{article}{
   author={Chemin, Jean-Yves},
   author={Desjardins, Beno\^{\i}t},
   author={Gallagher, Isabelle},
   author={Grenier, Emmanuel},
   title={Fluids with anisotropic viscosity},
   journal={M2AN Math. Model. Numer. Anal.},
   volume={34},
   date={2000},
   pages={315--335},
}
\bib{CJ}{article}{
   author={Choe, Hi Jun},
   author={Jin, Bum Ja},
   title={Weighted estimate of the asymptotic profiles of the Navier-Stokes
   flow in $\Bbb R^n$},
   journal={J. Math. Anal. Appl.},
   volume={344},
   date={2008},
   pages={353--366},
}
\bib{CZ}{article}{
   author={Chemin, Jean-Yves},
   author={Zhang, Ping},
   title={On the global wellposedness to the 3-D incompressible anisotropic
   Navier-Stokes equations},
   journal={Comm. Math. Phys.},
   volume={272},
   date={2007},
   pages={529--566},
}
\bib{EZ}{article}{
   author={Escobedo, Miguel},
   author={Zuazua, Enrike},
   title={Large time behavior for convection-diffusion equations in ${\bf
   R}^N$},
   journal={J. Funct. Anal.},
   volume={100},
   date={1991},
   pages={119--161},
}
\bib{FM}{article}{
   author={Fujigaki, Yoshiko},
   author={Miyakawa, Tetsuro},
   title={Asymptotic profiles of nonstationary incompressible Navier-Stokes
   flows in the whole space},
   journal={SIAM J. Math. Anal.},
   volume={33},
   date={2001},
   pages={523--544},
}
\bib{HZ}{article}{
   author={Hoff, David},
   author={Zumbrun, Kevin},
   title={Multi-dimensional diffusion waves for the Navier-Stokes equations
   of compressible flow},
   journal={Indiana Univ. Math. J.},
   volume={44},
   date={1995},
   number={2},
   pages={603--676},
   issn={0022-2518},
   review={\MR{1355414}},
   doi={10.1512/iumj.1995.44.2003},
}
\bib{Iftimie}{article}{
   author={Iftimie, Drago\c{s}},
   title={A uniqueness result for the Navier-Stokes equations with vanishing
   vertical viscosity},
   journal={SIAM J. Math. Anal.},
   volume={33},
   date={2002},
   pages={1483--1493},
}
\bib{IK}{article}{
   author={Ishige, Kazuhiro},
   author={Kawakami, Tatsuki},
   title={Refined asymptotic profiles for a semilinear heat equation},
   journal={Math. Ann.},
   volume={353},
   date={2012},
   pages={161--192},
}
\bib{Iwabuchi}{article}{
   author={Iwabuchi, Tsukasa},
   title={Global solutions for the critical Burgers equation in the Besov
   spaces and the large time behavior},
   journal={Ann. Inst. H. Poincar\'{e} Anal. Non Lin\'{e}aire},
   volume={32},
   date={2015},
   pages={687--713},
}
\bib{JWY}{article}{
   author={Ji, Ruihong},
   author={Wu, Jiahong},
   author={Yang, Wanrong},
   title={Stability and optimal decay for the 3D Navier-Stokes equations
   with horizontal dissipation},
   journal={J. Differential Equations},
   volume={290},
   date={2021},
   pages={57--77},
}
\bib{Kato}{article}{
   author={Kato, Masakazu},
   title={Sharp asymptotics for a parabolic system of chemotaxis in one
   space dimension},
   journal={Differential Integral Equations},
   volume={22},
   date={2009},
   pages={35--51},
}
\bib{KO}{article}{
   author={Kagei, Yoshiyuki},
   author={Okita, Masatoshi},
   title={Asymptotic profiles for the compressible Navier-Stokes equations
   in the whole space},
   journal={J. Math. Anal. Appl.},
   volume={445},
   date={2017},
   pages={297--317},
}
\bib{LPZ}{article}{
   author={Liu, Yanlin},
   author={Paicu, Marius},
   author={Zhang, Ping},
   title={Global well-posedness of 3-D anisotropic Navier-Stokes system with
   small unidirectional derivative},
   journal={Arch. Ration. Mech. Anal.},
   volume={238},
   date={2020},
   pages={805--843},
}
\bib{NSU}{article}{
   author={Nagai, Toshitaka},
   author={Syukuinn, Rai},
   author={Umesako, Masayuki},
   title={Decay properties and asymptotic profiles of bounded solutions to a
   parabolic system of chemotaxis in ${\bf R}^n$},
   journal={Funkcial. Ekvac.},
   volume={46},
   date={2003},
   pages={383--407},
}
\bib{NY}{article}{
   author={Nagai, Toshitaka},
   author={Yamada, Tetsuya},
   title={Large time behavior of bounded solutions to a parabolic system of
   chemotaxis in the whole space},
   journal={J. Math. Anal. Appl.},
   volume={336},
   date={2007},
   pages={704--726},
}
\bib{Paicu}{article}{
   author={Paicu, Marius},
   title={\'{E}quation anisotrope de Navier-Stokes dans des espaces critiques},
   language={French, with English summary},
   journal={Rev. Mat. Iberoamericana},
   volume={21},
   date={2005},
   pages={179--235},
}
\bib{PM}{article}{
   author={Paicu, Marius},
   author={Zhang, Ping},
   title={Global solutions to the 3-D incompressible anisotropic
   Navier-Stokes system in the critical spaces},
   journal={Comm. Math. Phys.},
   volume={307},
   date={2011},
   pages={713--759},
}
\bib{Pedlosky}{book}{
		author={Pedlosky, Joseph},
		title={Geophysical Fluid Dynamics},
		publisher={Springer-Verlag New York},
		date={1987},
}
\bib{XZ}{article}{
	 author={Xu, Li},
   author={Zhang, Ping},
   title={Enhanced dissipation for the third component of 3D anisotropic Navier-Stokes equations},
   journal={arXiv:2107.06453},
   }
\bib{YY}{article}{
   author={Yan, Kai},
   author={Yin, Zhaoyang},
   title={Global well-posedness of the three dimensional incompressible
   anisotropic Navier-Stokes system},
   journal={Nonlinear Anal. Real World Appl.},
   volume={32},
   date={2016},
   pages={52--73},
}
\bib{ZF}{article}{
   author={Zhang, Ting},
   author={Fang, Daoyuan},
   title={Global wellposed problem for the 3-D incompressible anisotropic
   Navier-Stokes equations},
   language={English, with English and French summaries},
   journal={J. Math. Pures Appl. (9)},
   volume={90},
   date={2008},
   pages={413--449},
}
\bib{ZT}{article}{
   author={Zhang, Ting},
   title={Global wellposed problem for the 3-D incompressible anisotropic
   Navier-Stokes equations in an anisotropic space},
   journal={Comm. Math. Phys.},
   volume={287},
   date={2009},
   pages={211--224},
}
\end{biblist}
\end{bibdiv}
\end{document}